\pdfoutput=1
\documentclass[11pt]{amsart}
\usepackage{amssymb,latexsym,amsmath,amsthm,amsfonts, enumerate}

\usepackage{color}
\usepackage[all]{xy}
\usepackage{etex}
\usepackage{mathrsfs}

\usepackage{tikz}
\usetikzlibrary{calc,decorations.markings,arrows}
\tikzset{->-/.style={decoration={markings,mark=at position .5 with {\arrow{>}}},postaction={decorate}}}

%


\usetikzlibrary{positioning,shapes,shadows,arrows,snakes}

\usepackage[colorlinks=true,pagebackref,hyperindex]{hyperref}
\newcommand\myshade{85}
\colorlet{mylinkcolor}{violet}
\colorlet{mycitecolor}{red}
\colorlet{myurlcolor}{cyan}

\hypersetup{
  linkcolor  = mylinkcolor!\myshade!black,
  citecolor  = mycitecolor!\myshade!black,
  urlcolor   = myurlcolor!\myshade!black,
  colorlinks = true,
}

\addtolength{\oddsidemargin}{-0.4in}
        \addtolength{\evensidemargin}{-0.4in}
        \addtolength{\textwidth}{0.8in}
        \addtolength{\topmargin}{-0.4in}
        \addtolength{\textheight}{0.8in}

\numberwithin{equation}{section}
\UseRawInputEncoding
\title{Quivers with potentials for Grassmannian cluster algebras}
\author{Wen Chang}\thanks{Wen Chang is supported by the NSF of China (Grant No. 11601295), Shaanxi Province and Shaanxi Normal University.}
\address{School of Mathematics and Statistics, Shaanxi Normal University,
Xi'an 710062, China.}
\email{changwen161@163.com}

\author{Jie Zhang}\thanks{Jie Zhang is supported by the NSF of China (Grant No. 11401022).}
\address{School of Mathematics and Statistics, Beijing institute of technology 100081, Beijing, P. R. China}
\email{jiezhang@bit.edu.cn}
\begin{document}
\maketitle
\begin{abstract}
We consider a quiver with potential $(Q(D),W(D))$ and an iced quiver with potential $(\overline{Q}(D), F(D), \overline{W}(D))$ associated to a Postnikov Diagram $D$ and prove that the mutations of them are compatible with the geometric exchanges of the Postnikov diagram $D$. This ensures we may define quivers with potentials $(Q,W)$ and $(\overline{Q},F,\overline{W})$ for a Grassmannian cluster algebra up to mutation-equivalence.  It shows that $(Q,W)$ is always rigid (thus non-degenerate) and Jacobi-finite. And in fact, it is the unique non-degenerate (thus rigid) quiver with potential by using a general result of Gei\ss-Labardini-Schr\"oer \cite{GLS16}.

Then we show that within the mutation class of the quiver with potential for a Grassmannian cluster algebra,
the quivers determine the potentials up to right equivalence.
As an application, we verify that the auto-equivalence group of the generalized cluster category ${\mathcal{C}}_{(Q, W)}$ is isomorphic to the cluster automorphism group of the associated Grassmannian cluster algebra ${{\mathcal{A}}_Q}$  with trivial coefficients.
\end{abstract}

\def\s{\stackrel}
\def\t{\tilde}

\newcommand{\p}{\scriptstyle}
\newtheorem{Theorem}{Theorem}[section]
\newtheorem{corollary}[Theorem]{Corollary}
\newtheorem{lemma}[Theorem]{Lemma}
\newtheorem{example}{Example}
\newtheorem{proposition}[Theorem]{Proposition}
\newtheorem{definition}[Theorem]{Definition}
\newtheorem{remark}[Theorem]{Remark}
\newtheorem{definition-proposition}[Theorem]{Definition-Proposition}
\newtheorem{conjecture}[Theorem]{Conjecture}
\newcommand{\qihao}{\fontsize{7.25pt}{\baselineskip}\selectfont}
\newcommand{\bahao}{\fontsize{6.25pt}{\baselineskip}\selectfont}
\newcommand{\shihao}{\fontsize{4.25pt}{\baselineskip}\selectfont}
\hyphenation{ap-pro-xi-ma-tion}

\def\Longrightarrow{{\longrightarrow}}
\def\A{\mathcal{A}}
\def\F{\mathcal{F}}
\def\R{\mathcal{R}}
\def\S{\Sigma}
\def\X{\mathscr{X}}
\def\Y{\mathcal{Y}}
\def\x{{\mathbf x}}
\def\p{{\mathbf p}}
\def\P{\mathbb{P}}
\def\ex{{\mathbf{ex}}}
\def\fx{{\mathbf{fx}}}

\def\Gr{\mbox{Gr}}
\def \text{\mbox}

\newcommand{\Z}{\mathbb{Z}}
\newcommand{\Q}{\mathbb{Q}}
\newcommand{\N}{\mathbb{N}}
\newcommand{\C}{\mathbb{C}}
\newcommand{\T}{\mathcal{T}}
\hyphenation{ap-pro-xi-ma-tion}
\newcommand{\NN}{\mathcal{N}}

\newcommand{\CM}{\operatorname{CM}}
\newcommand{\Sub}{\operatorname{Sub}}

\newcommand{\jie}{\color{blue}}
\newcommand{\wen}{\color{purple}}
\tableofcontents

\section*{Introduction}

Since having been introduced by Fomin and Zelevinsky in the year 2000 \cite{FZ02}, cluster algebras have been seeing a tremendous development. It is believed that the coordinate rings of several algebraic varieties related to semisimple groups have cluster structures. This has been verified for various varieties. An important and early example is the Grassmannians \cite{S06}. In this paper, we study the quivers with potentials associated to Grassmannians cluster algebras.

Recall that as a subalgebra of a rational function field, a (skew-symmetric) cluster algebra is generated by {\it cluster variables} in various {\it seeds}, where a seed is a pair consisting of a quiver and a set of indeterminates in the rational function field. Different seeds are related by an operation so called {\it mutation}. In some sense, the rich combinatorial structures on cluster algebras are given by mutations.
There is a representation-theoretic interpretation of quiver mutations given by Derksen, Weyman and Zelevinsky \cite{DWZ08}. They introduced the notion of quivers with potentials and their decorated representations, where potentials can be considered as sum of cycles in the quiver, the mutations of decorated representations can be viewed as a generalization of Bernstein-Gelfand-Ponomarev reflection functors.

On the other hand, the Postnikov diagram $D$, which is a kind of planar graph on a disc, corresponds to a special cluster in a Grassmannian cluster algebra, which consists of Pl\"ucker coordinates. The strands of the diagram cut the disc into some oriented regions and alternating oriented regions. Then the quiver $Q(D)$ of the cluster can be viewed as a kind of dual of the Postnikov diagram, with the alternating oriented regions as the vertices and the crossings of the strands as the arrows. It is proved by Scott in \cite{S06} that the mutation $\mu_a(Q(D))$ of the quiver at some special vertex $a$ is compatible with a transformation $\mu_R(D)$, called geometric exchange, on the Postnikov diagram associated to an alternating oriented quadrilateral cell $R.$ By considering the boundary of the disk as an oriented cycle, we define an iced quiver $(\overline{Q}(D),F)$. Note that the oriented regions yield some minimal cycles in the quiver. Then we define the potential $\overline{W}(D)$ for the quiver as an alternating sum of these minimal cycles. We then get the following theorem which is a generalization of a result in \cite{S06}, see Theorem \ref{compatible} for more details.

\begin{Theorem}\label{thm1}
The geometric exchanges $\mu_R(D)$ of the Postnikov diagram $D$ are compatible with the mutations of $(Q,W)$ and $(\overline{Q}(D),F,\overline{W}(D))$ up to right equivalence.
\end{Theorem}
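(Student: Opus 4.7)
The plan is to reduce the statement to a local check in the neighborhood of the alternating quadrilateral $R$ and the corresponding vertex $a$. Both $\mu_a(\overline{Q}(D),F,\overline{W}(D))$ and $(\overline{Q}(\mu_R(D)),F,\overline{W}(\mu_R(D)))$ differ from $(\overline{Q}(D),F,\overline{W}(D))$ only in arrows and cycles involving $a$, so it suffices to identify the local picture. The quiver-level identity $\mu_a(\overline{Q}(D))=\overline{Q}(\mu_R(D))$ is Scott's theorem, and it extends to the iced setting because mutation at an internal vertex $a$ does not touch the frozen boundary cycle. The substance of the theorem is therefore to show that the DWZ pre-mutation potential $\tilde{\mu}_a(\overline{W}(D))$ is right-equivalent to $\overline{W}(\mu_R(D))$ after reduction.

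First I would fix local notation around $R$: label the four arrows at $a$ as $\alpha_i : b_i\to a$ and $\beta_j : a\to c_j$ for $i,j\in\{1,2\}$, and let $O_1,\ldots,O_4$ be the four oriented regions adjacent to the edges of $R$. Each $O_k$ gives a minimal cycle containing exactly one composition $\beta_j\alpha_i$ through $a$, with sign $\varepsilon_k=\pm1$ dictated by the alternating-sum convention. Any other minimal cycle contributing to $\overline{W}(D)$ is disjoint from $a$ and is preserved by mutation.

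Next I would compute $\tilde{\mu}_a(\overline{W}(D))$ by the DWZ recipe: each $\beta_j\alpha_i$ appearing inside a cycle $[O_k]$ is replaced by the new arrow $[\beta_j\alpha_i]:b_i\to c_j$, producing shorter cycles $[O_k']$, and the cubic correction $\sum_{i,j}[\beta_j\alpha_i]\,\alpha_i^{\ast}\,\beta_j^{\ast}$ is added. On the other side, I would read off $\overline{W}(\mu_R(D))$ directly from the flipped diagram. After the geometric exchange, each of the four oriented regions adjacent to $R$ becomes an enlarged oriented region whose minimal cycle in $\overline{Q}(\mu_R(D))$ uses precisely the new arrow $[\beta_j\alpha_i]$ in place of the old composition $\beta_j\alpha_i$, matching the rewritten $[O_k']$ on the nose; and the flip creates four new oriented corner regions adjacent to $a$ whose cycles are exactly $[\beta_j\alpha_i]\,\alpha_i^{\ast}\,\beta_j^{\ast}$ (up to signs), matching the cubic correction.

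The main obstacle is the careful bookkeeping of signs in the alternating sum and the handling of non-generic configurations. In the generic case the proposed right-equivalence is the identity on every arrow and every sign discrepancy can be absorbed into the definition of the reversed arrows $\alpha_i^{\ast},\beta_j^{\ast}$. The cases requiring genuine work are: (i) two adjacent oriented regions around $R$ coincide or share more than one edge with $R$, so that some $\alpha_i^{\ast},\beta_j^{\ast}$ become parallel and a $2$-cycle appears in $\tilde{\mu}_a$; (ii) an edge of $R$ lies on the boundary of the disc, so one of the $O_k$ is contained in the frozen boundary contribution; and (iii) one of the new arrows $[\beta_j\alpha_i]$ is parallel to a pre-existing arrow $c_j\to b_i$ in $\overline{Q}(D)$, giving rise to an additional $2$-cycle in the pre-mutation. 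In each such case I would apply the DWZ reduction procedure and exhibit an explicit change of variables, supported on arrows incident to $a$ and fixing all frozen arrows, that eliminates the $2$-cycles and identifies the reduced potential with $\overline{W}(\mu_R(D))$; checking the compatibility of this change of variables with the iced structure is the main technical step.
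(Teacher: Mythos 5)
Your plan follows the paper's proof essentially verbatim: localize at the quadrilateral $R$ and the vertex $a$, apply the DWZ pre-mutation, match the rewritten fundamental cycles and the cubic correction terms against the fundamental cycles of the flipped diagram, and remove the resulting $2$-cycles by an explicit unitriangular change of variables, fixing residual signs by a rescaling right-equivalence. The only two points where the paper does more than you indicate are that the sign discrepancies cannot be absorbed purely locally into the reversed arrows but require a global $\pm 1$ rescaling whose consistency rests on the planarity of $\overline{Q}(D)$ (Lemma \ref{Lemma:right-equivalence}), and that the subtlest boundary configuration is an oriented region adjacent to $R$ contributing no cycle to $\overline{W}(D)$ at all (an anticlockwise boundary region, which has no closing external arrow) yet becoming an internal triangle after the flip.
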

Note that the concept of the mutation of an iced quiver with potential we used here is recently induced by Pressland in \cite{P18}. We mention that excepting the related works of Scott, there already exist some other related results which compare the mutations with other operations, like that stated in the above theorem. For example, in \cite{V09}, Vit\'{o}ria compared the mutation of the quiver with potential and the Seiberg duality, where the concept of Seiberg duality is defined by requiring certain complexes to be tilting, and in the context of Postnikov diagrams, this duality actually corresponds to the geometric exchange, cf. \cite{HHJPRR12}. On the other hand, Buan-Iyama-Reiten-Smith proved in \cite{BIRS11} the compatibility of the mutations of cluster tilting objects in the generalized cluster category arising from $(Q,W)$ and the mutations of the quiver with potentials, while Pressland considered in \cite{P18} the case for the iced quiver with potential $(\overline{Q}(D),F,\overline{W}(D))$. And Baur-King-Marsh proved that the dimer algebra arising from a Postnikov diagram is invariant under the geometric exchange \cite{BKM16}.

The above Theorem \ref{thm1} allows us to define the quivers with potentials (up to right equivalence and mutation equivalence) for a Grassmannian cluster algebra which is independent on the choice of the Postnikov diagram.
Note that the mutation of a quiver with potential can only be operated at a vertex which is not involved in 2-cycles, and even when the initial quiver in a quiver with potential has no 2-cycles, there may appear 2-cycles after mutations. A quiver with potential is called {\it non-degenerate} if there exist no 2-cycles after any iterated mutations. A more ``generic" condition called {\it rigidity} implies the non-degeneration. So a rigid quiver with potential can be viewed as a kind of {\it ``good"} quiver with potential, respecting to the mutations. We also study the rigidity of the quiver with potential of a Grassmannian cluster algebra: see Theorem \ref{thm:uniqueness}

\begin{Theorem}
The quiver with potential without frozen vertices $(Q,W)$ associated to a Grassmannian cluster algebra is rigid, and  it is the unique rigid quiver with potential with underlying quiver $Q$ up to right equivalence and mutation equivalence.
\end{Theorem}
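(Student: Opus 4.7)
The plan is to establish rigidity first and then extract the uniqueness statement from it via the general principle of Geiss-Labardini-Schr\"oer \cite{GLS16}. Because the first theorem of the introduction tells us that QP-mutation intertwines with geometric exchange of the Postnikov diagram, and since any two Postnikov diagrams for $\Gr(k,n)$ are connected by a chain of geometric exchanges, it suffices to verify rigidity for one convenient choice of initial diagram $D_0$: once rigidity holds for $(\overline{Q}(D_0),F(D_0),\overline{W}(D_0))$, it is preserved under QP-mutation and hence transports to every Postnikov diagram.

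For the rigidity step I would fix $D_0$ to be a standard (rectangular / staircase) Postnikov diagram whose associated frozen Jacobian algebra can be identified with the dimer/boundary algebra of Baur-King-Marsh, which in turn is the endomorphism algebra of a cluster-tilting object $T$ in the stably $2$-Calabi-Yau Frobenius category $\CM(B_{k,n})$ of Jensen-King-Su. Rigidity of $T$ in this $2$-CY category, i.e.\ $\operatorname{Ext}^1(T,T)=0$, together with Keller's recognition of generalized cluster categories, translates into the DWZ rigidity of $(\overline{Q}(D_0),\overline{W}(D_0))$, namely the vanishing of the reduced trace space $J(\overline{Q}(D_0),\overline{W}(D_0))/[J,J]$ modulo vertex idempotents. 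A purely internal verification is also possible: the cyclic derivatives of $\overline{W}(D_0)$ identify, around each mutable vertex, the two minimal oriented cycles it belongs to, which forces every longer cycle at a mutable vertex to be cyclically equivalent (modulo the Jacobian ideal) to a product of arrows along the boundary cycle, and such boundary contributions are right-equivalence invariant.

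Having rigidity, non-degeneracy is automatic by DWZ, and the Jacobian-finiteness claimed in the paper then places us in exactly the situation handled by \cite{GLS16}: on a $2$-acyclic quiver admitting a rigid potential, the rigid potential is the unique non-degenerate one up to right equivalence. The quiver $\overline{Q}(D)$ restricted to mutable vertices is $2$-acyclic by construction (two alternating regions share at most one crossing), so the GLS criterion applies directly, and one checks that the frozen part does not interfere, since the only frozen contribution to $\overline{W}(D)$ is the boundary cycle, which is fixed under any right equivalence of iced QPs. The main obstacle is the rigidity step itself, in particular handling the frozen arrows carefully when invoking Keller's machinery, or, in the purely combinatorial approach, the bookkeeping of how minimal oriented regions overlap in a general Postnikov diagram. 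Transporting rigidity through mutation via the first theorem is the device that lets us avoid doing this for every diagram and concentrate all the work on the chosen $D_0$.
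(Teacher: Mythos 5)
There are two genuine gaps here. First, your categorical route to rigidity does not work. The cluster-tilting object $T$ in $\CM(B)$ whose endomorphism algebra is the Baur--King--Marsh dimer algebra corresponds to the IQP $(\overline{\overline{Q}}(D),F,\overline{\overline{W}}(D))$, not to the paper's $(\overline{Q}(D),F,\overline{W}(D))$ (the two differ by the anticlockwise external arrows), and the paper's Remarks \ref{rem:nonrigidity} and \ref{rem:nonJacobi-finite} point out that the BKM IQP is \emph{not} rigid and \emph{not} Jacobi-finite --- so the object you propose to transfer rigidity from does not have it. More fundamentally, $\operatorname{Ext}^1(T,T)=0$ holds for \emph{every} cluster-tilting object by definition and has nothing to do with DWZ-rigidity of a potential, which is a deformation-theoretic condition (vanishing of the trace space); QPs from punctured surfaces give cluster-tilting objects yet are non-degenerate without being rigid. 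The rigidity must be proved by hand, and the paper does so via two combinatorial lemmas on the explicit initial quiver: every cycle is congruent modulo $J(\overline{Q}_{ini},F,\overline{W}_{ini})$ to a \emph{power} $\omega^m$ of a fundamental cycle (Lemma \ref{Lemma:first key lemma}, a case analysis of nine local configurations at extremal vertices), and such powers lie in the Jacobian ideal because one can propagate from a boundary triangle whose arrow lies in only one fundamental cycle (Lemma \ref{Lemma:second key lemma}). Your sketch of the ``purely internal'' route misses the key point that a general cycle reduces to a power of a fundamental cycle, not to a boundary product, and that these powers require a separate argument.

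Second, the uniqueness step rests on a general principle that does not exist: it is not a theorem of \cite{GLS16} (and is not known in general) that a rigid potential on a $2$-acyclic quiver is the unique non-degenerate one. What \cite[Theorem 8.20]{GLS16} provides is a criterion with a quantitative hypothesis: every cycle $l$ of length greater than $long(W)$ must be cyclically equivalent to $\sum_\alpha \eta_\alpha\partial_\alpha W$ with $short(\eta_\alpha)+short(\partial_\alpha W)\geqslant length(l)$ for all $\alpha$. Verifying this degree bookkeeping (together with the condition that every non-degenerate potential contains all fundamental cycles, via \cite[Proposition 2.4]{GLS16}) is the actual content of the paper's proof of Theorem \ref{thm:uniqueness}, carried out by induction on the level of a fundamental cycle for the principal-part QP $(Q^{pr}_{ini},W^{pr}_{ini})$; rigidity alone does not supply it. Note also that the uniqueness statement is established for the principal part $Q^{pr}$, so the issue of ``frozen contributions'' you raise does not arise in the form you describe.
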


We should mention that the rigidity of $(Q,W)$ is already obtained by several authors, for example by Buan-Iyama-Reiten-Smith \cite{BIRS11} and Kulkarni \cite{K19}, where the method used in \cite{BIRS11} is more algebraic, while the method used in \cite{K19} is more topological, like this paper. Compare with  \cite{K19}, the method we use in this paper is different. In fact, Kulkarni got the rigidity for a larger class of quivers with potentials arising from dimer models, while we get the conclusion by giving an explicit description of a special initial quiver with potential. These description are important to us, we will also use them to prove the uniqueness of the rigid quiver with potential of a Grassmannian cluster algebra.

We also mention that the techniques used in subsection \ref{sec:rigidity} to describe the properties of the quivers with potentials associated to Grassmannian cluster algebras is inspired by the work of Labardini for the surface cluster algebras \cite{L09,L16}. The philosophy behind this is that as for the surface cluster algebras, some quivers of the Grassmannian cluster algebras is ``2-dimensional", that means it can be embedded into a disk, noticing that such quivers are the dual of the Postinkov diagrams. So from this point of view, our main results in section \ref{Section:quivers with potentials of Grassmannian cluster algebras}, especially the uniqueness of the rigid quiver with potential, and thus the following application to the cluster automorphism groups, can be established in more general settings, for example, for the cluster algebras arising from the dimer models \cite{B12}, from the unipotent groups \cite{BIRS09}, and from the double Bruhat cells \cite{FZ07}. However, we restrict our interests to the Grassmannian cluster algebras in this paper.

We can easily get the following corollary from the rigidity uniqueness:
\begin{corollary}
Inside the mutation-equivalent class of quiver with potential of a Grassmannian cluster algebra, the quiver determines the potentials up to right equivalent. More precisely, assume that $(Q',W')$ and $(Q,W)$ are two quivers with potentials of a Grassmannian cluster algebra. Then

(1) $(Q',W')$ is right equivalent to $(Q,W)$ if $Q'\cong Q$;

(2) $(Q',W')$ is right equivalent to $(Q^{op},W^{op})$ if $Q'\cong Q^{op}$.
\end{corollary}


As an application of above result, we compare two groups associated to the Grassmannian cluster algebras.
One is the cluster automorphism group, which is introduced in \cite{ASS12} to describe the symmetries of a cluster algebra. It is proved in \cite{ASS12,BIRS09} that if the cluster algebra is of acyclic type, then the cluster automorphism group is isomorphic to the auto-equivalence group of the corresponding cluster category.
We provide a similar isomorphism between these two groups for the Grassmannian cluster algebra with trivial coefficients, see Theorem \ref{thm:auto-equi-gp=clu-auto-gp}.
\begin{Theorem}
Let $(Q,W)$ be a quiver with potential for a Grassmannian cluster algebra. Then the auto-equivalence group of the generalized cluster category $\mathcal{C}_{(Q,W)}$ is isomorphic to the cluster automorphism group
of the associated Grassmannian cluster algebra $\mathcal{A}_{Q}$ with trivial coefficients.
\end{Theorem}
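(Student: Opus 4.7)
The plan is to adapt the argument for the acyclic case in \cite{ASS12,BIRS09}, with the uniqueness of the quiver with potential (Theorem \ref{thm:uniqueness}) playing the role that tilting theory plays in the acyclic setting. Since $(Q,W)$ is rigid, non-degenerate, and Jacobian-finite by the preceding theorems, Amiot's generalized cluster category $\mathcal{C}_{(Q,W)}$ is Hom-finite and $2$-Calabi-Yau, and the isomorphism classes of its basic cluster-tilting objects are in mutation-equivariant bijection with the seeds of $\mathcal{A}_{(Q,W)}$ (with trivial coefficients), the Gabriel quiver of each endomorphism algebra coinciding with the quiver of the corresponding seed. I would construct maps in both directions between the auto-equivalence group of $\mathcal{C}_{(Q,W)}$ and the cluster automorphism group of $\mathcal{A}_{(Q,W)}$ and verify that they are mutually inverse group homomorphisms.

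For the map $\Phi$ from auto-equivalences to cluster automorphisms: any auto-equivalence $F$ sends basic cluster-tilting objects to basic cluster-tilting objects and commutes with mutation, so it induces a bijection on seeds compatible with seed mutation; reading this off on cluster variables produces $\Phi(F)$, and functoriality is immediate. For the reverse map $\Psi$: given a cluster automorphism $\varphi$, let $Q'$ be the quiver of the seed obtained by applying $\varphi$ to the initial seed, and let $W'$ be the potential attached to $Q'$ via the Postnikov diagram construction. Theorem \ref{thm:uniqueness} supplies a right-equivalence $(Q,W)\simeq(Q',W')$, which lifts to a quasi-isomorphism of Ginzburg dg-algebras and hence to a triangle equivalence $\mathcal{C}_{(Q,W)}\to\mathcal{C}_{(Q',W')}$; composing with the identification of $Q$ with $Q'$ dictated by $\varphi$ yields $\Psi(\varphi)$. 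Checking $\Phi\circ\Psi=\mathrm{id}$ and $\Psi\circ\Phi=\mathrm{id}$ then reduces to tracking both compositions on the initial cluster-tilting object and the initial seed.

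The main obstacle is the well-definedness of $\Psi$: a cluster automorphism is a priori just combinatorial data, so lifting it to a triangulated auto-equivalence requires that the Ginzburg dg-algebra (and hence the cluster category) be recoverable from its underlying quiver up to equivalence, which is exactly the content of Theorem \ref{thm:uniqueness}. A secondary technicality, handled as in \cite{ASS12}, is the treatment of the shift $[1]$ and the distinction between strict and ``quasi'' cluster automorphisms, producing a compatible $\mathbb{Z}/2$ factor on each side. Finally one should check that $\Psi$ is independent of the intermediate choices of right-equivalence and of cluster-tilting-object representatives, which follows from the fact that these ambiguities lie in the kernel of the action on cluster-tilting objects and are therefore washed out when we pass to isomorphism classes of auto-equivalences.
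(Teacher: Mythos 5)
Your proposal is correct and follows essentially the same route as the paper: the easy direction (auto-equivalences induce cluster automorphisms) is the paper's observation that $Aut_T(\mathcal{C})\subset Aut(\A_{\phi(T)})$, and the lifting of a cluster automorphism is done, exactly as you propose, by combining the Keller--Yang equivalence attached to a QP-mutation with the equivalence induced by the right-equivalence $(Q',W')\simeq(Q,W)$ supplied by Theorem \ref{thm:uniqueness} (and its analogue for $(Q^{op},W^{op})$). The only cosmetic difference is that the paper factors the argument through a general proposition (uniqueness of the non-degenerate potential implies the group isomorphism for any Jacobian-finite QP) before specializing to the Grassmannian case.
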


Note that excepting some special Grassmannian cluster algebras, most of them are non-acyclic. In fact, we prove a more general result, this isomorphism is valid for a generalized cluster category whose potentials are determined by the quivers.
Note that on that one hand, these two groups both describe the symmetries of the cluster structures in the category and the algebra respectively. On the other hand, the cluster structure in the cluster algebra $\mathcal{A}_{Q}$ only depends on the quiver, rather than the potential over the quiver.
So we conjecture that the quivers always determine the potentials in the mutation-equivalent classes of quivers with potentials, and thus these two groups are isomorphic for all generalized cluster categories, see Conjecture \ref{conj:quiver-determines-potential} and Conjecture \ref{conj:auto-equi-gp=clu-auto-gp} respectively.

The paper is organized as follows: In section \ref{sec:prelim.}, we recall some preliminaries on cluster algebras, quivers with potentials and Grassmannian cluster algebras. In section \ref{Section:quivers with potentials of Grassmannian cluster algebras}, we define the quivers with potentials for Grassmannian cluster algebras and prove their rigidity and uniqueness. Section \ref{sec:application} is devoted to an application of our main results to the generalized cluster categories, namely, we prove the isomorphism between the auto-equivalence group of the category and the cluster automorphism group in subsection  \ref{Section:auto-equivalence groups and cluster automorphism groups}.

\section*{Conventions}
Throughout the paper, we use $\Z$ as the set of integers, $\N$ as the set of positive integers, and $\C$ as the set of complex numbers. Arrows in a quiver are composed from right to left, that is, we write a path $j\s{\beta}\rightarrow i \s{\alpha}\rightarrow k$ as $\alpha\beta$.

\section{Preliminaries}\label{sec:prelim.}
In this section, we briefly recall some definitions on quivers with potentials and Grassmannian cluster algebras.
\subsection{Quivers with potentials}\label{subsec:QP}
The references of this subsection are \cite{BIRS11,DWZ08,GLS16,P18}, especially \cite{P18} for the case of iced quiver with potentials.

\bigskip

{\it{Quivers:}}
\noindent

Recall that a {\it quiver} is a quadruple $Q=(Q_0,Q_1,s,t)$ consisting of a finite set of {\it vertices} $Q_0$, of a finite set of {\it arrows} $Q_1$, and of two maps $s, t$ from $Q_1$ to $Q_0$ which map each arrow $\alpha$ to its {\it source} $s(\alpha)$ and its {\it target} $t(\alpha)$, respectively. An {\it iced quiver} is a pair $(Q,F)$ where $Q$ is a quiver and $F=(F_0,F_1,s,t)$ is a subquiver (not necessarily full) of $Q$, where $F_0\subseteq Q_0$ and $F_1\subseteq Q_1$.
The vertices in $F_0$ are called the {\it frozen vertices}, while the vertices in $Q_0\setminus F_0$ are called the {\it exchangeable vertices}.
The arrows in $F_1$ are called the {\it frozen arrows}, while the arrows in $Q_1\setminus F_1$ are called the {\it unfrozen arrows}.
The full subquiver of $Q$ with vertex set $Q_0\setminus F_0$ is called the {\it principal part} of $Q$, denoted by $Q^{pr}$.






Let $(Q,F)$ be an iced quiver without loops nor 2-cycles. A mutation of $(Q,F)$ at exchangeable vertex $i$ is an iced quiver $(\mu_i(Q),F)$, where $\mu_i(Q)$ is obtained from $Q$ by:
\begin{itemize}
\item inserting a new unfrozen arrow $\gamma: j\to k$ for each path $j\s{\beta}\rightarrow i \s{\alpha}\rightarrow k$;
\item inverting all arrows passing through $i$;
\item removing the arrows in a maximal set of pairwise disjoint $2$-cycles ({\it $2$-cycles moves}).
\end{itemize}

\bigskip
{\it{Cluster algebras:}}

Let $(Q,F)$ be an iced quiver with $Q_0 = \{1, 2, \ldots, n+m\}$ and $F_0 = \{n+1, n+2, \ldots, n+m\}$.
By associating to each vertex $i \in Q_0$ an indeterminate element $x_i$, one gets a set $\tilde{\x}=\{x_1, x_2,\ldots, x_{n+m}\}=\{x_1, x_2, \ldots, x_{n}\}\sqcup\{x_{n+1}, x_{n+2}, \ldots, x_{n+m}\}=\x\sqcup\p$. We call the triple $\S=(Q,F,\tilde{\x})$ a {\it{seed}}. An element in $\x$ (resp. in $\p$) is called a {\it{cluster variable}} (resp. {\it{coefficient variable}}), and $\x$ is called a {\it{cluster}}.

Let $x_i$ be a cluster variable, the mutation of the seed $\S$ at $x_i$ is a new seed $\mu_i(\S)=(\mu_i(Q),F,\mu_i(\x))$, where $\mu_i(\x) = (\x \setminus \{x_i\}) \sqcup \{x'_i\}$ with
\begin{equation}
\label{eq: exchange relations}
x_i x_i^\prime = \prod_{\substack{\alpha\in Q_1; \\ s(\alpha)=i}} x_{t(\alpha)} + \prod_{\substack{\alpha\in Q_1; \\ t(\alpha)=i}} x_{s(\alpha)}.
\end{equation}

Denote by ${\X}$ the union of all possible clusters obtained from an initial seed $\S=(Q,F,\tilde{\x})$ by iterated mutations.
Let $\P$ be the free abelian group (written multiplicatively) generated by the elements of $\p$.
Let $\F=\Q \P(x_{1},x_{2},\ldots ,x_{n})$ be the field of rational functions in $n$ independent
variables with coefficients in $\Q \P$.
The {\it cluster algebra} $\A_{(Q,F)}$ is a $\Z\P$-subalgebra of $\F$ generated by cluster variables in ${\X}$, that is
\[\A_{(Q,F)}=\Z\P[\X].\]
\bigskip

{\it{Quivers with potentials:}}

Let $(Q,F)$ be an iced quiver without loops. We denote by $\C\langle Q\rangle$ the {\it{path algebra}} of $Q$ over $\C$. By $length(p)$ we denote the {\it{length}} of a path $p$ in $\C\langle Q\rangle$. The {\it{complete path algebra}} $\C\langle \langle Q\rangle\rangle$ is the completion of $\C\langle Q\rangle$ with respect to the ideal $\mathfrak{m}$
generated by the arrows of $Q$. A {\it{potential}} $W$ on $Q$ is an element in the closure $Pot(Q)$ of the space generated by all cycles in $Q$.
We say two potentials $W$ and $W'$ are \emph{cyclically
equivalent} if $W-W'$ belongs to the closure $C$ of the space
generated by all differences $\alpha_s \cdots\alpha_2 \alpha_1 - \alpha_1\alpha_{s} \cdots \alpha_2$ coming from
cycles $\alpha_s \cdots \alpha_2\alpha_1$. Denote by $[l]$ the set of cycles which are cyclically equivalent to a cycle $l$.
We call a triple $(Q,F,W)$ an {\it{iced quiver with potential}} or an IQP for brevity, if no two terms in $W\in Pot(Q)$ are cyclically equivalent. Moreover, if each term in $W$ includes at least one unfrozen arrow, then we call the IQP {\it irredundant}. If $F=\emptyset$, then we call the pair $(Q,W)$ a {\it{quiver with potential}} or a QP for brevity, and as for the quiver, we also view a QP as a special IQP.

Let $(Q,F,W)$ and $(Q',F',W')$ be two IQPs with $Q_0=Q'_0$. Their {\em direct sum}, denoted by $(Q\oplus Q',F\cup F',W+W')$, is a new IQP, where $Q\oplus Q'$ is a quiver with vertex set $Q_0$ and arrow set $Q_1\sqcup Q'_1$, $F\cup F'$ is a subquiver of $Q\oplus Q'$ with vertex set $F_0\cup F'_0$ and arrow set $F_1\cup F'_1$.


\bigskip

{\it{Jacobian algebras:}}

For an arrow $\alpha$ of $Q$, we define $\partial_\alpha : Pot(Q) \to \C\langle \langle Q\rangle\rangle$ the {\em cyclic derivative} with respect to $\alpha$, which is the unique continuous linear map that sends a cycle $l$ to the sum $\sum_{l=p\alpha q} pq$ taken over all decompositions of the cycle $l$.
Let $J(Q,F,W)$ be the closure of the ideal of $\C\langle Q\rangle$ generated by cyclic derivatives in
$\{\partial_\alpha W,\alpha ~{\textrm {unfrozen}} \}.$
We call $J(Q,F,W)$ the {\it{(frozen) Jacobian ideal}} of $(Q,F,W)$ and call the quotient
$$\mathcal{P}(Q,F,W)=\C\langle \langle Q\rangle\rangle/J(Q,F,W)$$
the {\it{(frozen) Jacobian algebra}} of $(Q,F,W)$.

For an IQP $(Q,F,W)$, we call it \emph{trivial} if each term in $W$ is a $2$-cycle and $\mathcal{P}(Q,F,W)$ is a product of copies of $\C$, we say it is \emph{reduced} if each term of $W$ includes at least one unfrozen arrow and $\partial_\beta W\in \mathfrak{m}^2$ for any unfrozen arrow $\beta$.

\bigskip
{\it{Right-equivalences and reductions:}}

Two IQPs $(Q,F,W)$ and $(Q',F',W')$ are \emph{right-equivalent} if $Q$ and
$Q'$ have the same set of vertices and frozen vertices, and there exists an algebra
isomorphism $\varphi:\C\langle \langle Q\rangle\rangle\rightarrow \C\langle \langle Q'\rangle\rangle$ whose
restriction on vertices is the identity map, $\phi(\C\langle \langle F\rangle\rangle)=\C\langle \langle F'\rangle\rangle$, and $\varphi(W)$ and
$W$ are cyclically equivalent. Such an isomorphism $\varphi$ is
called a \emph{right-equivalence}.

It is proved in \cite[Theorem 3.6]{P18} (in \cite[Theorem 4.6]{DWZ08} that for QP) that for any irredundant IQP $(Q,F,W)$, there exist a reduced
IQP $(Q_{\mathrm{red}},F_{\mathrm{red}},W_{\mathrm{red}})$ such that the Jacobian algebras $\mathcal{P}(Q,F,W)$ and $\mathcal{P}(Q_{\mathrm{red}},F_{\mathrm{red}},W_{\mathrm{red}})$ are isomorphic. Furthermore, the
right-equivalence class of
$(Q_{\mathrm{red}},F_{\mathrm{red}},W_{\mathrm{red}})$ is determined by the
right-equivalence class of $(Q,F,W)$.
The operation to producing $(Q_{\mathrm{red}},F_{\mathrm{red}},W_{\mathrm{red}})$ is called the {\it reduction}, which consisting of the following steps (see Lemma 3.14 and the proof of Theorem 3.6 \cite{P18}):

step I: up to replacing $W$ by a right equivalent potential, we write
\begin{equation}
\label{eq:potential}
W=\sum_{i=1}^M\alpha_i\beta_i+\sum_{i=M+1}^N\alpha_i(\beta_i+p_i)
+W_1
\end{equation}
for some arrows $\alpha_i$ and $\beta_i$ and elements $p_i\in \mathfrak{m}^2$, where
\begin{itemize}
\item $\alpha_i$ is unfrozen for all $1\leq i\leq N$, and $\beta_i$ is frozen if and only if $i>M$,
\item the arrows $\alpha_i$ and $\beta_i$ with $1\leq i\leq M$ each appear exactly once in the expression,
\item the arrows $\beta_i$, for $1\leq i\leq N$, do not appear in any of the $p_j$, and
\item the arrows $\alpha_i$ and $\beta_i$, for $1\leq i\leq N$, do not appear in the potential $W_1$, which has no length $2$ terms.
\end{itemize}

Step II: Let $Q'$ be the subquiver of $Q$ consisting of all vertices and those arrows which are not $\alpha_i$ and $\beta_i, 1\leq i\leq N$, and
$$W'=\sum_{i=M+1}^N\alpha_i(\beta_i+p_i)
+W_1.$$
Then $(Q',F,W')$ is an IQP.

Step III: Let $(Q_{\mathrm{red}},F_{\mathrm{red}})$ be the ice quiver obtained from $(Q',F)$ by deleting $\beta_i$ and freezing $\alpha_i$ for each $M+1\leq i\leq N$. Let
$$W_{\mathrm{red}}=\sum_{i=M+1}^N \alpha_ip_i
+W_1.$$
Then $(Q_{\mathrm{red}},F_{\mathrm{red}},W_{\mathrm{red}})$ is the reduced IQP we want.

\bigskip

{\it{Mutations of iced quivers with potentials:}}

Let $(Q,F,W)$ be an irredundant IQP, and let $i$ be an exchangeable vertex of $Q$ such that there is no $2$-cycles at $i$ and no cycle occurring in the decomposition of $W$ starts and ends at $i$.
The {\em pre-mutation} $\widetilde{\mu}_i(Q,F,W)$ of $(Q,F,W)$ is a new quiver with potential $(\widetilde{\mu}_i(Q),\widetilde{\mu}_i(F),\widetilde{\mu}_i(W))=(\widetilde{Q},\widetilde{F},\widetilde{W})$  defined as follows. The new iced quiver $(\widetilde{Q},\widetilde{F})$ is obtained from $(Q,F)$ by
\begin{itemize}
\item adding a new unfrozen arrow $[\alpha\beta]: j\to k$ for each path $j\s{\beta}\rightarrow i \s{\alpha}\rightarrow k$;
\item replacing each arrow $\alpha$ incident to $i$ with an arrow $\alpha^*$ in the opposite direction.
\end{itemize}
The new potential $\widetilde{W}$ is the sum of two potentials $\widetilde{W}_1$ and $\widetilde{W}_2$. The potential $\widetilde{W}_1$ is obtained by replacing each factor $\alpha_p\alpha_{p+1}$ by  $[\alpha_p\alpha_{p+1}]$ with $s(\alpha_p)=t(\alpha_{p+1})=i$ for any cyclic path $\alpha_1\cdots\alpha_s$ occuring in the expansion of $W.$ The potential $\widetilde{W}_2$ is given by
$$\widetilde{W}_2=\sum_{\alpha,\beta}[\alpha\beta]\beta^{*}\alpha^{*},$$
where the sum ranges over all pairs of arrows $\alpha$ and $\beta$
 with $s(\alpha)=t(\beta)=i$. Then $\widetilde{\mu}_i(Q,F,W)$ is an irredundant IQP.
We denote by $\mu_i(Q,F,W)$ the reduced part of $\widetilde{\mu}_i(Q,F,W)$, and call $\mu_i$ the \emph{mutation} of $(Q,F,W)$ at the vertex $i$. We call two IQPs {\em mutation equivalent} if one can be obtained from another by iterated mutations. Note that the mutation equivalence is an equivalent relation on the set of right-equivalence classes of IQPs.



\subsection{Grassmannian cluster algebras} We recall in this subsection some definitions on Grassmannian cluster algebras, and we refer to \cite{P06} and \cite{S06} for more details on Postnikov diagrams and Grassmannian cluster algebras, respectively.

Let $\Gr(k,n)$ be the Grassmannian of $k$-planes in $\C^n$ and $\C[\Gr(k,n)]$ be its homogeneous coordinate ring. When $k=2$, Fomin and Zelevinsky proved
that $\C[\Gr(k,n)]$ has a cluster algebra structure \cite{FZ03}.
Scott generalised this result to the case of any Grassmannian $\Gr(k,n)$, where the proof relies on a correspondence between a special kind of clusters in the cluster algebra and a kind of planar diagram.

\bigskip
{\it Postnikov diagrams:}

For $k, n\in \N$ with $k < n$, a {\it{$(k,n)$-Postnikov diagram}} $D$ is a collection of $n$ oriented paths, called {\it{strands}}, in a disk with $n$ marked points on its boundary, labeled by $1, 2, \ldots, n$ in clockwise orientation. The strands, which are labeled by $1 \leqslant i \leqslant n$, start at point $i$ and end at point $i+k$. These strands obey the following conditions:

\begin{itemize}
\item Any two strands cross transversely, and there are no triple crossings between strands.
\item No strand intersects itself.
\item There are finitely many crossing points.
\item Following any given strand, the other strands alternately cross it  from left to right and from right to left.
\item For any two strands $i$ and $j$ the configuration shown in Figure \ref{fig:forbidden} is forbidden.
\end{itemize}

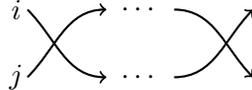
\begin{figure}[ht]
\begin{center}
{\begin{tikzpicture}[scale=0.15]

\node[] (C) at (0,3)
						{$\cdots$};
\node[] (C) at (0,-3)
						{$\cdots$};

\draw[->,thick] (3,3) to [out=0,in=135](10,-3);
\draw[->,thick] (3,-3) to [out=0,in=-135](10,3);
\draw[->,thick] (-10,-3) to [out=45,in=180](-3,3);
\draw[->,thick](-10,3) to [out=-45,in=180](-3,-3);

\node[] (C) at (-11,3)
						{$i$};
\node[] (C) at (-11,-3)
						{$j$};
\end{tikzpicture}}
\caption{Forbidden crossing}
\label{fig:forbidden}
\end{center}
\end{figure}

Postnikov diagrams are identified up to isotopy.
We say that a Postnikov diagram is of reduced type if no {\it{untwisting move}} shown in Figure \ref{Figure:untwisting move} can be applied to it.
\begin{figure}[ht]
\begin{center}
{\begin{tikzpicture}[scale=0.15]
\draw[thick] (-3,3) -- (3,3);
\draw[thick] (-3,-3) -- (3,-3);

\draw[thick] (3,3) to [out=0,in=135](10,-3);
\draw[thick] (3,-3) to [out=0,in=-135](10,3);
\draw[thick] (-10,-3) to [out=45,in=180](-3,3);
\draw[thick](-10,3) to [out=-45,in=180](-3,-3);

\node[] (C) at (-11,3)
						{$i$};
\node[] (C) at (-11,-3)
						{$j$};

\draw[->,thick] (-0.1,-3) to (0,-3);
\draw[<-,thick] (0,3) to (0.1,3);

\draw[->,thick] (18,0) to (22,0);
\end{tikzpicture}}
\qquad\qquad
{\begin{tikzpicture}[scale=0.15]
\draw[thick] (-10,3) -- (10,3);
\draw[thick] (-10,-3) -- (10,-3);

\node[] (C) at (-11,3)
						{$i$};
\node[] (C) at (-11,-3)
						{$j$};

\draw[->,thick] (-0.1,3) to (0,3);
\draw[<-,thick] (0,-3) to (0.1,-3);
\end{tikzpicture}}
\caption{Untwisting move}
\label{Figure:untwisting move}
\end{center}
\end{figure}
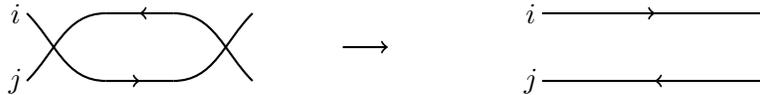
The fourth condition ensures that the strands divide the disc into two types of regions: {\it{oriented regions}}, where all the strands on their boundary circle clockwise or counterclockwise, and {\it{alternating oriented regions}}, where the adjacent strands alternate directions.
A region is said to be {\it{internal}} if it is not adjacent to the boundary of the disk, and the other regions are referred to as {\it{boundary regions}}. See Figure \ref{Example-Figure:iced quiver} for an example of $(3,7)$-Postnikov diagram.

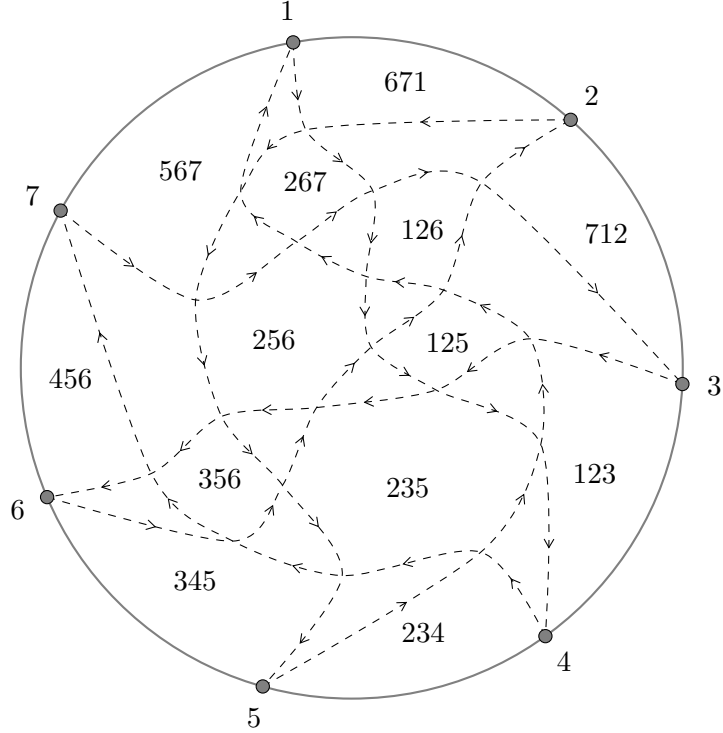
\begin{figure}
\[
\begin{tikzpicture}[scale=1.1,baseline=(bb.base),
 strand/.style={black,dashed}]

\newcommand{\strarrow}{\arrow{angle 60}}
\newcommand{\bstart}{100} 
\newcommand{\seventh}{51.4} 
\newcommand{\qstart}{150.7} 
\newcommand{\bigrad}{4cm} 
\newcommand{\eps}{11pt} 
\newcommand{\dotrad}{0.1cm} 
\newcommand{\bdrydotrad}{{0.8*\dotrad}} 

\path (0,0) node (bb) {};


\draw (0,0) circle(\bigrad) [thick,gray];

{ \coordinate (b1) at (\bstart-\seventh*1:\bigrad);
  \draw (\bstart-\seventh*1:\bigrad+\eps) node {$2$}; }

{ \coordinate (b2) at (\bstart-\seventh*2:\bigrad);
  \draw (\bstart-\seventh*2:\bigrad+\eps) node {$3$}; }

{ \coordinate (b3) at (\bstart-\seventh*3:\bigrad);
  \draw (\bstart-\seventh*3:\bigrad+\eps) node {$4$}; }

{ \coordinate (b4) at (\bstart-\seventh*4:\bigrad);
  \draw (\bstart-\seventh*4:\bigrad+\eps) node {$5$}; }

{ \coordinate (b5) at (\bstart-\seventh*5:\bigrad);
  \draw (\bstart-\seventh*5:\bigrad+\eps) node {$6$}; }

{ \coordinate (b6) at (\bstart-\seventh*6:\bigrad);
  \draw (\bstart-\seventh*6:\bigrad+\eps) node {$7$}; }

{ \coordinate (b7) at (\bstart-\seventh*7:\bigrad);
  \draw (\bstart-\seventh*7:\bigrad+\eps) node {$1$}; }

\foreach \n/\a/\r in {8/77/0.79, 10/130/0.5, 12/60/0.2, 14/350/0.5, 16/220/0.3, 18/225/0.75, 20/280/0.75,
    9/117/0.77, 11/92/0.38, 13/30/0.7, 15/290/0.05, 17/185/0.7, 19/250/0.55,  21/330/0.75}
{ \coordinate (b\n)  at (\a:\r*\bigrad); }


\foreach \e/\f/\t in {8/9/0.4, 9/10/0.5, 8/11/0.4, 10/11/0.5, 11/12/0.5, 8/13/0.5, 12/13/0.65, 13/14/0.4, 12/15/0.5,
 14/15/0.5, 15/16/0.5, 16/17/0.65, 10/17/0.6, 17/18/0.45, 18/19/0.5, 19/20/0.5, 20/21/0.5, 16/19/0.5, 14/21/0.5}
{\coordinate (a\e\f) at (${\t}*(b\e) + {1-\t}*(b\f)$); }


\draw [strand] plot[smooth]
coordinates {(b1) (a89) (a910) (a1017) (a1617) (a1619) (a1920) (b4)}
[ postaction=decorate, decoration={markings,
  mark= at position 0.16 with \strarrow, mark= at position 0.33 with \strarrow,
  mark= at position 0.46 with \strarrow, mark= at position 0.58 with \strarrow,
  mark= at position 0.69 with \strarrow, mark= at position 0.795 with \strarrow,
  mark= at position 0.94 with \strarrow }];
\draw [strand] plot[smooth] coordinates {(b2) (a1314) (a1415) (a1516) (a1617) (a1718) (b5)}
[ postaction=decorate, decoration={markings,
  mark= at position 0.13 with \strarrow, mark= at position 0.34 with \strarrow,
  mark= at position 0.50 with \strarrow, mark= at position 0.65 with \strarrow,
  mark= at position 0.79 with \strarrow, mark= at position 0.92 with \strarrow }];
\draw [strand] plot[smooth] coordinates {(b3) (a2021) (a1920) (a1819) (a1718) (b6)}
[ postaction=decorate, decoration={markings,
  mark= at position 0.09 with \strarrow, mark= at position 0.25 with \strarrow,
  mark= at position 0.40 with \strarrow, mark= at position 0.59 with \strarrow,
  mark= at position 0.83 with \strarrow }];
\draw [strand] plot[smooth] coordinates {(b4) (a2021) (a1421) (a1314) (a1213) (a1112) (a1011) (a910) (b7)}
[ postaction=decorate, decoration={markings,
  mark= at position 0.17 with \strarrow, mark= at position 0.335 with \strarrow,
  mark= at position 0.45 with \strarrow, mark= at position 0.56 with \strarrow,
  mark= at position 0.65 with \strarrow, mark= at position 0.73 with \strarrow,
  mark= at position 0.81 with \strarrow, mark= at position 0.93 with \strarrow, }];
\draw [strand] plot[smooth] coordinates {(b5) (a1819) (a1619) (a1516) (a1215) (a1213) (a813) (b1)}
[ postaction=decorate, decoration={markings,
  mark= at position 0.15 with \strarrow, mark= at position 0.33 with \strarrow,
  mark= at position 0.43 with \strarrow, mark= at position 0.55 with \strarrow,
  mark= at position 0.65 with \strarrow, mark= at position 0.78 with \strarrow,
  mark= at position 0.93 with \strarrow }];
  \draw [strand] plot[smooth] coordinates {(b6) (a1017) (a1011) (a811) (a813) (b2)}
[ postaction=decorate, decoration={markings,
  mark= at position 0.12 with \strarrow, mark= at position 0.30 with \strarrow,
  mark= at position 0.43 with \strarrow, mark= at position 0.56 with \strarrow,
  mark= at position 0.84 with \strarrow }];
\draw [strand] plot[smooth] coordinates {(b7) (a89) (a811) (a1112) (a1215) (a1415) (a1421) (b3)}
[ postaction=decorate, decoration={markings,
  mark= at position 0.08 with \strarrow, mark= at position 0.19 with \strarrow,
  mark= at position 0.32 with \strarrow, mark= at position 0.42 with \strarrow,
  mark= at position 0.53 with \strarrow, mark= at position 0.66 with \strarrow,
  mark= at position 0.88 with \strarrow }];


\foreach \n in {1,2,3,4,5,6,7} {\draw (b\n) circle(\bdrydotrad) [fill=gray];}


\foreach \n/\m/\r in {1/671/0.88, 2/712/0.87, 3/123/0.8, 4/234/0.83, 5/345/0.8, 6/456/0.85, 7/567/0.79}
{ \draw (\qstart-20-\seventh*\n:\r*\bigrad) node (q\m) {$\m$}; }

\foreach \m/\a/\r in {267/104/0.58 , 126/63/0.47, 256/160/0.25, 125/15/0.3, 356/220/0.52, 235/295/0.4 }
{ \draw (\a:\r*\bigrad) node (q\m) {$\m$}; }

 \end{tikzpicture}
\]
\caption{A $(3,7)$-Postnikov diagram}
\label{Example-Figure:iced quiver}
\end{figure}

Given a Postnikov diagram $D$ and an {\it{alternating oriented quadrilateral cell}} $R$ inside $D$, a new Postnikov diagram $\widetilde{\mu}_{R}(D)$ is constructed by the local rearrangement shown in Figure \ref{Figure:geometric exchange}. We call $\widetilde{\mu}_{R}$ a {\it{pre-geometric exchange}} at $R$. Note that there may appear new configurations in $\widetilde{\mu}_{R}(D)$ as shown in the left side of Figure \ref{Figure:untwisting move}. Let $\mu_{R}(D)$ be the Postnikov diagram obtained from $\widetilde{\mu}_{R}(D)$ after untwisting these new configurations. We call $\mu_{R}$ a {\it{geometric exchange}} at $R$.
Note that if $D$ is of reduced type then so is $\mu_{R}(D)$.

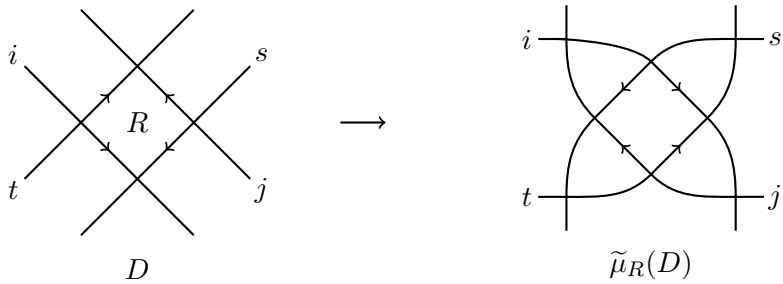
\begin{figure}
\begin{center}
{\begin{tikzpicture}[scale=0.15]
\draw[thick] (-10,-5) -- (5,10);
\draw[thick](-5,-10) to (10,5);
\draw[thick] (10,-5) -- (-5,10);
\draw[thick](5,-10) to (-10,5);

\draw[->,thick] (18,0) to (22,0);

\node[] (C) at (11,-6)
						{$j$};
\node[] (C) at (-11,6)
						{$i$};
\node[] (C) at (11, 6)
						{$s$};
\node[] (C) at (-11,-6)
						{$t$};

\draw[->,thick] (-2.6,2.4) to (-2.5,2.5);
\draw[->,thick] (-2.6,-2.4) to (-2.5,-2.5);
\draw[->,thick] (2.6,2.4) to (2.5,2.5);
\draw[->,thick] (2.6,-2.4) to (2.5,-2.5);
\node[] (C) at (0,0)
						{$R$};
\node[] (C) at (0,-13)
						{$D$};
\end{tikzpicture}}
\qquad\qquad
{\begin{tikzpicture}[scale=0.15]
\draw[thick] (-5,0) -- (0,5);
\draw[thick](0,-5) to (5,0);
\draw[thick] (5,0) -- (0,5);
\draw[thick](0,-5) to (-5,0);

\draw[thick] (-5,0) to [out=-135,in=90](-7.5,-7);
\draw[thick](0,5) to [out=45,in=180](7.5,7);
\draw[thick] (-7.5,-7) to [out=0,in=-135] (0,-5);
\draw[thick] (7.5,7) to [out=-90,in=45] (5,0);

\draw[thick] (-7.5,-7)to(-10,-7) ;
\draw[thick] (-7.5,-7)to(-7.5,-10) ;
\draw[thick] (7.5,7)to(10,7) ;
\draw[thick] (7.5,7)to(7.5,10) ;

\draw[thick](7.5,-7) to [out=90,in=-45](5,0);
\draw[thick](-7.5,7)to [out=180,in=135](0,5) ;
\draw[thick](0,-5) to [out=-45,in=180] (7.5,-7);
\draw[thick] (-5,0) to [out=135,in=-90](-7.5,7);

\draw[thick] (7.5,-7)to(10,-7) ;
\draw[thick] (7.5,-7)to(7.5,-10) ;
\draw[thick] (-7.5,7)to(-10,7) ;
\draw[thick] (-7.5,7)to(-7.5,10) ;

\node[] (C) at (11,-7)
						{$j$};
\node[] (C) at (-11,7)
						{$i$};
\node[] (C) at (11, 7)
						{$s$};
\node[] (C) at (-11,-7)
						{$t$};

\draw[<-,thick] (-2.6,2.4) to (-2.5,2.5);
\draw[<-,thick] (-2.6,-2.4) to (-2.5,-2.5);
\draw[<-,thick] (2.6,2.4) to (2.5,2.5);
\draw[<-,thick] (2.6,-2.4) to (2.5,-2.5);
\node[] (C) at (0,-13)
						{$\widetilde{\mu}_R(D)$};
\end{tikzpicture}}
\caption{Pre-geometric exchange}
\label{Figure:geometric exchange}
\end{center}
\end{figure}

\bigskip
{\it{Grassmannian cluster algebras:}}
For a Postnikov diagram $D$, one may associate it a quiver $\overline{Q}(D)$, whose vertices are labeled by the alternating oriented regions of $D$ and arrows correspond to intersection points of two alternating regions, with orientation as in Figure \ref{Figure: Orientation convention for the quiver}. Let $(\overline{Q}(D),F(D))$ be an iced quiver, where the internal alternating oriented regions correspond to the exchangeable vertices of $\overline{Q}(D)$, while the boundary alternating oriented regions correspond to the frozen vertices, and the frozen arrows are all the arrows connecting the boundary alternating oriented regions, which are red arrows in Figure \ref{Figure: Orientation convention for the quiver}.
We denote by $Q(D)$ the principal part of $\overline{Q}(D)$.

\begin{example}

See Figure \ref{Example-Figure:iced quiver2} for the quiver $(\overline{Q}(D),F(D))$ associated to the Postnikov diagram $D$ in Figure~\ref{Example-Figure:iced quiver}.

\begin{figure}
\[
\begin{tikzpicture}[scale=1.1,baseline=(bb.base),
 strand/.style={black, dashed},
 quivarrow/.style={black, -latex, thick},
 equivarrow/.style={red, -latex, thick},
 eequivarrow/.style={red, -latex, thick}]
\newcommand{\strarrow}{\arrow{angle 60}}
\newcommand{\bstart}{100} 
\newcommand{\seventh}{51.4} 
\newcommand{\qstart}{150.7} 
\newcommand{\bigrad}{4cm} 
\newcommand{\eps}{11pt} 
\newcommand{\dotrad}{0.1cm} 
\newcommand{\bdrydotrad}{{0.8*\dotrad}} 

\path (0,0) node (bb) {};

{ \coordinate (b1) at (\bstart-\seventh*1:\bigrad);
  \draw (\bstart-\seventh*1:\bigrad+\eps) node {$2$}; }

{ \coordinate (b2) at (\bstart-\seventh*2:\bigrad);
  \draw (\bstart-\seventh*2:\bigrad+\eps) node {$3$}; }

{ \coordinate (b3) at (\bstart-\seventh*3:\bigrad);
  \draw (\bstart-\seventh*3:\bigrad+\eps) node {$4$}; }

{ \coordinate (b4) at (\bstart-\seventh*4:\bigrad);
  \draw (\bstart-\seventh*4:\bigrad+\eps) node {$5$}; }

{ \coordinate (b5) at (\bstart-\seventh*5:\bigrad);
  \draw (\bstart-\seventh*5:\bigrad+\eps) node {$6$}; }

{ \coordinate (b6) at (\bstart-\seventh*6:\bigrad);
  \draw (\bstart-\seventh*6:\bigrad+\eps) node {$7$}; }

{ \coordinate (b7) at (\bstart-\seventh*7:\bigrad);
  \draw (\bstart-\seventh*7:\bigrad+\eps) node {$1$}; }

\foreach \n/\a/\r in {8/77/0.79, 10/130/0.5, 12/60/0.2, 14/350/0.5, 16/220/0.3, 18/225/0.75, 20/280/0.75,
    9/117/0.77, 11/92/0.38, 13/30/0.7, 15/290/0.05, 17/185/0.7, 19/250/0.55,  21/330/0.75}
{ \coordinate (b\n)  at (\a:\r*\bigrad); }


\foreach \e/\f/\t in {8/9/0.4, 9/10/0.5, 8/11/0.4, 10/11/0.5, 11/12/0.5, 8/13/0.5, 12/13/0.65, 13/14/0.4, 12/15/0.5,
 14/15/0.5, 15/16/0.5, 16/17/0.65, 10/17/0.6, 17/18/0.45, 18/19/0.5, 19/20/0.5, 20/21/0.5, 16/19/0.5, 14/21/0.5}
{\coordinate (a\e\f) at (${\t}*(b\e) + {1-\t}*(b\f)$); }


\draw [strand] plot[smooth]
coordinates {(b1) (a89) (a910) (a1017) (a1617) (a1619) (a1920) (b4)}
[ postaction=decorate, decoration={markings,
  mark= at position 0.16 with \strarrow, mark= at position 0.33 with \strarrow,
  mark= at position 0.46 with \strarrow, mark= at position 0.58 with \strarrow,
  mark= at position 0.69 with \strarrow, mark= at position 0.795 with \strarrow,
  mark= at position 0.94 with \strarrow }];
\draw [strand] plot[smooth] coordinates {(b2) (a1314) (a1415) (a1516) (a1617) (a1718) (b5)}
[ postaction=decorate, decoration={markings,
  mark= at position 0.13 with \strarrow, mark= at position 0.34 with \strarrow,
  mark= at position 0.50 with \strarrow, mark= at position 0.65 with \strarrow,
  mark= at position 0.79 with \strarrow, mark= at position 0.92 with \strarrow }];
\draw [strand] plot[smooth] coordinates {(b3) (a2021) (a1920) (a1819) (a1718) (b6)}
[ postaction=decorate, decoration={markings,
  mark= at position 0.09 with \strarrow, mark= at position 0.25 with \strarrow,
  mark= at position 0.40 with \strarrow, mark= at position 0.59 with \strarrow,
  mark= at position 0.83 with \strarrow }];
\draw [strand] plot[smooth] coordinates {(b4) (a2021) (a1421) (a1314) (a1213) (a1112) (a1011) (a910) (b7)}
[ postaction=decorate, decoration={markings,
  mark= at position 0.17 with \strarrow, mark= at position 0.335 with \strarrow,
  mark= at position 0.45 with \strarrow, mark= at position 0.56 with \strarrow,
  mark= at position 0.65 with \strarrow, mark= at position 0.73 with \strarrow,
  mark= at position 0.81 with \strarrow, mark= at position 0.93 with \strarrow, }];
\draw [strand] plot[smooth] coordinates {(b5) (a1819) (a1619) (a1516) (a1215) (a1213) (a813) (b1)}
[ postaction=decorate, decoration={markings,
  mark= at position 0.15 with \strarrow, mark= at position 0.33 with \strarrow,
  mark= at position 0.43 with \strarrow, mark= at position 0.55 with \strarrow,
  mark= at position 0.65 with \strarrow, mark= at position 0.78 with \strarrow,
  mark= at position 0.93 with \strarrow }];
  \draw [strand] plot[smooth] coordinates {(b6) (a1017) (a1011) (a811) (a813) (b2)}
[ postaction=decorate, decoration={markings,
  mark= at position 0.12 with \strarrow, mark= at position 0.30 with \strarrow,
  mark= at position 0.43 with \strarrow, mark= at position 0.56 with \strarrow,
  mark= at position 0.84 with \strarrow }];
\draw [strand] plot[smooth] coordinates {(b7) (a89) (a811) (a1112) (a1215) (a1415) (a1421) (b3)}
[ postaction=decorate, decoration={markings,
  mark= at position 0.08 with \strarrow, mark= at position 0.19 with \strarrow,
  mark= at position 0.32 with \strarrow, mark= at position 0.42 with \strarrow,
  mark= at position 0.53 with \strarrow, mark= at position 0.66 with \strarrow,
  mark= at position 0.88 with \strarrow }];


\foreach \n in {1,2,3,4,5,6,7} {\draw (b\n) circle(\bdrydotrad) [fill=gray];}


\foreach \n/\m in {1/671,2/712,3/123,4/234,5/345,6/456,7/567}
{ \draw (\qstart-\seventh*\n-25:\bigrad) node (q\m) {$\m$}; }

\foreach \m/\a/\r in {267/104/0.58 , 126/63/0.47, 256/160/0.25, 125/15/0.3, 356/220/0.52, 235/295/0.4 }
{ \draw (\a:\r*\bigrad) node (q\m) {$\m$}; }


\foreach \t/\h/\a in {671/267/-47,267/567/44, 267/126/18, 126/712/47, 125/126/-8, 123/125/-52, 126/256/3,
  256/267/-3, 256/125/-4, 256/356/-25, 567/256/-44, 356/456/31, 356/235/-2, 235/256/12, 125/235/2,
  235/345/35, 345/356/-54, 234/235/-62, 235/123/47}
{ \draw [quivarrow]  (q\t) edge [bend left=\a] (q\h); }


\foreach \t/\h/\a in {712/123/20, 123/234/20, 456/567/20, 567/671/20}
{ \draw [equivarrow]  (q\t) edge [bend left=\a] (q\h); }


\foreach \t/\h/\a in {712/671/-20, 345/234/-20,
  456/345/-20}
{ \draw [eequivarrow]  (q\t) edge [bend left=\a] (q\h); }

 \end{tikzpicture}
\]
\caption{The iced quiver $(\overline{Q}(D),F(D))$ of the Postnikov diagram $D$ in Figure~\ref{Example-Figure:iced quiver}, where the red arrows are the frozen arrows.}
\label{Example-Figure:iced quiver2}
\end{figure}
\end{example}

It has been proved in \cite{S06} that the coordinate ring $\C[\Gr(k,n)]$ has a cluster algebra structure, more precisely, the localization of $\C[\Gr(k,n)]$ at consecutive Pl\"{u}cker coordinates is isomorphic to the complexification of $\A_{(\overline{Q}(D),F)}$ as cluster algebras.

\begin{figure}
\[
\begin{tikzpicture}[scale=1.3,baseline=(bb.base),
  strand/.style={black,dashed,thick},
  quivarrow/.style={-latex, very thick}]
\newcommand{\strarrow}{\arrow{angle 60}}
\newcommand{\bdrydotrad}{0.07cm} 
\path (0,0) node (bb) {}; 

\draw [strand] plot[smooth]
coordinates {(0.6,0.8) (0.4,0.4) (0.2,0.15) (0,0)}
[ postaction=decorate, decoration={markings,
  mark= at position 0.5 with \strarrow}];
  \draw [strand] plot[smooth]
coordinates {(0.6,-0.8) (0.4,-0.4) (0.2,-0.15) (0,0)}
[ postaction=decorate, decoration={markings,
  mark= at position 0.5 with \strarrow}];
\draw [strand] plot[smooth]
coordinates {(0,0) (-0.2,0.15) (-0.4,0.4) (-0.6,0.8)}
[ postaction=decorate, decoration={markings,
  mark= at position 0.6 with \strarrow}];
\draw [strand] plot[smooth]
coordinates {(0,0) (-0.2,-0.15) (-0.4,-0.4) (-0.6,-0.8)}
[ postaction=decorate, decoration={markings,
  mark= at position 0.6 with \strarrow}];
\draw [quivarrow] (0.8,0)--(-0.8,0);
\draw[fill] (1,0) circle [radius=0.05];
\draw[fill] (-1,0) circle [radius=0.05];

\end{tikzpicture}
\qquad\qquad
\begin{tikzpicture}[scale=1.3,baseline=(bb.base),
  strand/.style={black,dashed,thick},
  quivarrow/.style={-latex, red, very thick}]
\newcommand{\strarrow}{\arrow{angle 60}}
\newcommand{\bdrydotrad}{0.07cm} 
\path (0,0) node (bb) {}; 

\draw [strand] plot[smooth]
coordinates {(0.6,0.8) (0.4,0.4) (0.2,0.15) (0,0)}
[ postaction=decorate, decoration={markings,
  mark= at position 0.5 with \strarrow}];
\draw [strand] plot[smooth]
coordinates {(0,0) (-0.2,0.15) (-0.4,0.4) (-0.6,0.8)}
[ postaction=decorate, decoration={markings,
  mark= at position 0.6 with \strarrow}];

\draw[] (0,0) circle [radius=0.05];
\draw (0,-0.2) node {\small $m$};
\draw [quivarrow] (.8,0)--(-.8,0);
\draw[fill] (1,0) circle [radius=0.05];
\draw[fill] (-1,0) circle [radius=0.05];
\end{tikzpicture}
%
%
\]
\caption{Arrow orientations in the quiver $\overline{Q}(D)$. The figures can also occur in the opposite sense, which means inverting the orientations of the strands and the arrows simultaneously.}\label{Figure: Orientation convention for the quiver}
\end{figure}

\begin{remark}
Note that the frozen arrows in the iced quiver $(\overline{Q}(D),F(D))$ have no influence on the ``cluster structure" of the cluster algebra ${\mathcal{A}}_{(\overline{Q}(D),F(D))}$. However, these arrows appear naturally in the (Frobenius) categorification of the cluster algebras in \cite{BKM16,GLS08,JKS16}. In particular, it is proved by Buar-King-Marsh in \cite{BKM16} that $(\overline{Q}(D),F(D))$ is the Gabriel quiver of the endomorphism algebra of the cluster tilting object in the Frobenius category (see
Section \ref{sec:application} for more information about this). These arrows are also important for the properties of the IQP itself (see Remarks \ref{rem:geo-ex-and-mut}, \ref{rem:nonrigidity} and \ref{rem:nonJacobi-finite}). The choice of such arrows ensures that we are able to define the IQP associated to the Grassmannian cluster algebras (see Remarks \ref{rem:geo-ex-and-mut} and Definition \ref{def:QP for Grassmannian cluster algebra}).
\end{remark}

\section{Quivers with potentials of Grassmannian cluster algebras}\label{Section:quivers with potentials of Grassmannian cluster algebras}
We introduce in this section two quivers with potentials $(\overline{Q}(D),F(D),\overline{W}(D))$ and $(Q(D),W(D))$ for each Postnikov diagram $D$, and verify the compatibility of geometric exchanges of $D$ and the mutations of these quivers with potentials. This ensures we may define quivers with potentials $(Q,W)$ and $(\overline{Q},F,\overline{W})$ for a Grassmannian cluster algebra up to mutation-equivalence. Then we prove the rigidity of $(Q, W)$ and the finiteness of the dimension of the corresponding Jacobian algebra $\mathcal{P}(Q, W)$. We also show that $(Q, W)$ is the unique rigid QP over the quiver $Q$ of the Grassmannian cluster algebras up to right-equivalence, and for each QP in the mutation class of $(Q, W)$,  we show that the quiver determines the potentials up to right-equivalence.

\subsection{The definition}\label{subsection:The definition}
We consider the boundary of $D$ as an oriented cycle, with clockwise orientation. We write $C.W.$ (resp. $A.C.W.$) for clockwise (resp. anticlockwise) for brevity. Then each oriented region $r$ in $D$ is either $C.W.$ or $A.C.W.$, and gives a set $[\omega_r]$ of cyclically equivalent cycles $\omega_r$ in the quiver whose arrows bound the region $r$, see Figure \ref{Figure: fundamental cycles}. Depending on the (internal or boundary) location of $r$, there are two kinds of such cycles, we call them the {\it internal fundamental cycles} and the {\it boundary fundamental cycles} respectively. We denote by $\mathfrak{R}(D)$ the set of all oriented regions in $D$.
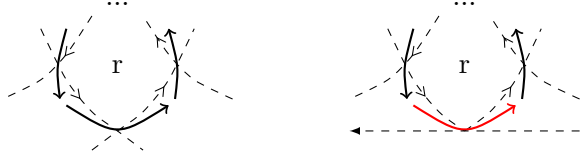
\begin{figure}
\[
\begin{tikzpicture}[scale=1.7,baseline=(bb.base),
  strand/.style={black,dashed},
  quivarrow/.style={black, -latex, very thick},
  boundaryarrow/.style={dashed, -latex}]
\newcommand{\strarrow}{\arrow{angle 60}}
\newcommand{\bdrydotrad}{0.03cm} 
\path (0,0) node (bb) {}; 

\draw [strand] plot[smooth]
coordinates {(-0.2,-0.15)(0,0)(0.2,0.15) (0.4,0.4)(0.6,0.8)}
[ postaction=decorate, decoration={markings,
  mark= at position 0.6 with \strarrow}];
\draw [strand] plot[smooth]
coordinates {(-0.6,0.8)(-0.4,0.4)(-0.2,0.15) (0,0)(0.2,-0.15)}
[ postaction=decorate, decoration={markings,
  mark= at position 0.5 with \strarrow}];
  \draw [strand] plot[smooth]
coordinates { (-0.2,0.9) (-0.4,0.6)(-0.6,0.4) (-0.9,0.25)}
[ postaction=decorate, decoration={markings,
  mark= at position -0.61 with \strarrow}];
    \draw [strand] plot[smooth]
coordinates { (0.9,0.25)(0.6,0.4) (0.4,0.6) (0.2,0.9) }
[ postaction=decorate, decoration={markings,
  mark= at position 0.8 with \strarrow}];
\draw[<-,thick] (-0.45,0.25)..controls(-0.48,0.5)..(-0.4,0.8);
\draw[->,thick] (0.45,0.25)..controls(0.48,0.5)..(0.4,0.8);
\draw[<-,thick] (0.4,0.2)..controls(0,-0.05)..(-0.4,0.2);

\draw (0,1) node {...};
\draw (0,0.5) node {r};

\end{tikzpicture}
\qquad\qquad
%
%
%
\begin{tikzpicture}[scale=1.7,baseline=(bb.base),
  strand/.style={black,dashed},
  quivarrow/.style={black, -latex, very thick},
  boundaryarrow/.style={dashed, -latex}]
\newcommand{\strarrow}{\arrow{angle 60}}
\newcommand{\bdrydotrad}{0.03cm} 
\path (0,0) node (bb) {}; 

\draw [strand] plot[smooth]
coordinates {(0,0) (0.2,0.15) (0.4,0.4) (0.6,0.8)}
[ postaction=decorate, decoration={markings,
  mark= at position 0.5 with \strarrow}];
\draw [strand] plot[smooth]
coordinates { (-0.6,0.8)(-0.4,0.4)(-0.2,0.15)(0,0) }
[ postaction=decorate, decoration={markings,
  mark= at position 0.6 with \strarrow}];
  \draw [strand] plot[smooth]
coordinates {(-0.2,0.9) (-0.4,0.6) (-0.6,0.4) (-0.9,0.25)}
[ postaction=decorate, decoration={markings,
  mark= at position -0.61 with \strarrow}];
    \draw [strand] plot[smooth]
coordinates {(0.9,0.25) (0.6,0.4) (0.4,0.6) (0.2,0.9)}
[ postaction=decorate, decoration={markings,
  mark= at position 0.8 with \strarrow}];
\draw[<-,thick] (-0.45,0.25)..controls(-0.48,0.5)..(-0.4,0.8);
\draw[->,thick] (0.45,0.25)..controls(0.48,0.5)..(0.4,0.8);
\draw[<-,red,thick] (0.4,0.2)..controls(0,-0.05)..(-0.4,0.2);
\draw [boundaryarrow] (0.9,0)--(-0.9,0);
\draw (0,1) node {...};
\draw (0,0.5) node {r};

\end{tikzpicture}
\]
\caption{The fundamental cycle, where the horizonal dashed line is the boundary of the diagram, and other dashed lines are the strands. The omitted part is at the interior of the diagram. The regions are anti-clockwise. When the regions are clockwise, the figures occur in the opposite sense, which means inverting the orientations of the strands and the arrows simultaneously. The first cycle is an internal fundamental cycle, while the second one is a boundary fundamental cycle.}
\label{Figure: fundamental cycles}
\end{figure}

\begin{definition}\label{Potentials for quivers of Postnikov diagrams}
For the quivers $Q(D)$ and $(\overline{Q}(D),F(D))$, let $W(D)$ and $\overline{W}(D)$ be potentials in the corresponding quivers, which are signed sums of representatives of fundamental cycles in the quivers, that is
\begin{equation*}
W=\sum\limits_{\substack{r\in\mathfrak{R}(D)~C.W.\\ \omega_{r}\in{Pot(Q(D))}}}{\omega_r}-\sum\limits_{\substack{r\in\mathfrak{R}(D)~A.C.W.\\ \omega_{r}\in{Pot(Q(D))}}}{\omega_r};~~~~~~~
\end{equation*}
\begin{equation*}
\overline{W}=\sum\limits_{\substack{r\in\mathfrak{R}(D)~C.W.\\ \omega_{r}\in{Pot(\overline{Q}(D))}}}{\omega_r}-\sum\limits_{\substack{r\in\mathfrak{R}(D)~A.C.W.\\ \omega_{r}\in{Pot(\overline{Q}(D))}}}{\omega_r}.
\end{equation*}
\end{definition}
Note that for each oriented region, we choose one representative in the potential, so these potentials are dependent on such choice. However, different choices are cyclically equivalent to each other. Clearly, there are no two cyclically equivalent cycles appearing in the potential simultaneously, and there is at least one unfrozen arrow in each term of the potential. So $(Q(D),W(D))$ is a QP and $(\overline{Q}(D),F(D),\overline{W}(D))$ is an IQP. Note that these (iced) QPs are also reduced by definition.


\begin{remark}\label{rem:BKM-IQP}
In a more general setting of dimer models, the potentials are always picked in such signed sum over the fundamental cycles. In particular, in the settings of Grassmannian cluster algebras, the IQP $(\overline{Q}(D),F(D),\overline{W}(D))$ has been introduced and studied in \cite{BKM16, P18}. The Jacobian algebra of the IQP is realized in \cite{BKM16} as the endomorphism algebra of the cluster tilting object in the JKS's Frobenius category.
\end{remark}

We are going to prove that  mutations of $(Q(D),W(D))$ and $(\overline{Q}(D),F,\overline{W}(D))$ are compatible with  geometric exchanges of the Postnikov diagrams $D$. We need the following lemma.
\begin{lemma}\label{Lemma:right-equivalence}
Let $D$ be a Postnikov diagram. let $\varepsilon: \{r\} \rightarrow \{\pm 1\}$ be a function on the set of oriented regions in $D$. Define
potentials
\begin{equation*}
W_\varepsilon=\sum\limits_{\substack{r\in\mathfrak{R}(D)~C.W.\\ \omega_{r}\in{Pot(Q(D))}}}\varepsilon(r){\omega_r}-\sum\limits_{\substack{r\in\mathfrak{R}(D)~A.C.W.\\ \omega_{r}\in{Pot(Q(D))}}}\varepsilon(r){\omega_r};
\end{equation*}
\begin{equation*}
\overline{W}_\varepsilon(D)=\sum\limits_{\substack{r\in\mathfrak{R}(D)~C.W.\\ \omega_{r}\in{Pot(\overline{Q}(D))}}}\varepsilon(r){\omega_r}-\sum\limits_{\substack{r\in\mathfrak{R}(D)~A.C.W.\\ \omega_{r}\in{Pot(\overline{Q}(D))}}}\varepsilon(r){\omega_r}.
\end{equation*}
Then $(Q(D),W_\varepsilon(D))$ and $(\overline{Q}(D),F(D),\overline{W}_\varepsilon(D))$) are right-equivalent to $(Q(D),W(D))$ and $(\overline{Q}(D),F(D),\overline{W}(D))$ respectively.
\end{lemma}
\begin{proof}
We only deal with the case of $Q(D)$, the case of $Q(D)$ is similar. Because the underlying graph of $Q(D)$ is a {\em planar graph} with {\em non-trivial} boundary, as stated for the QP arising from surfaces in section 10 of \cite{L16}, for any $\varepsilon,$ there exists a function
$$\xi: Q_1(D) \rightarrow \{\pm 1\}$$
on the arrows of $Q(D)$ such that for any $r$ with $\omega_r=\alpha_m \cdots\alpha_2 \alpha_1$, we have
$$\prod\limits_{i=1}^{m} \xi(\alpha_i)=\varepsilon(r).$$
 So the map
$$\phi:Q_1(D)\rightarrow Q_1(D), \alpha \mapsto \xi(\alpha)\alpha$$
 induces an algebra isomorphism $\Phi$ from $\C\langle\langle Q(D)\rangle\rangle$ to $\C\langle \langle Q(D)\rangle\rangle$ which maps $W(D)$ to $W_\varepsilon(D)$. Then $\Phi$ is a right-equivalence which completes the proof.
\end{proof}

Now we are ready to give the main result in this subsection.
\begin{Theorem}\label{compatible} The mutations of $(Q(D),W(D))$ and $(\overline{Q}(D),F,\overline{W}(D))$ are compatible with the geometric exchanges of the Postnikov diagram. More precisely, let $D$ be a reduced Postnikov diagram with an alternating oriented quadrilateral cell $R$, which associates to an exchangeable vertex $a$ in the quiver. Then up to right-equivalence, we have

(1) $\mu_{a}(Q(D))=Q(\mu_{R}(D))$ and $\mu_{a}(W(D))=W(\mu_{R}(D))$.

(2) ${\mu}_{a}(\overline{Q}(D),F(D))=(\overline{Q}(\mu_{R}(D)),F(\mu_{R}(D)))$ and $\mu_{a}(\overline{W}(D))=\overline{W}(\mu_{R}(D))$;

\end{Theorem}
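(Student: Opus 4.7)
The plan is to reduce to an entirely local computation in a neighborhood of the quadrilateral $R$, and then to compare, arrow by arrow and cycle by cycle, the output of the quiver mutation $\mu_a$ with the quiver produced by $\mu_R$. Outside a small disc around $R$ the Postnikov diagram is unchanged, and so are all strands, regions, and the vertices of $\overline{Q}(D)$ together with all arrows and all fundamental cycles not passing through the vertex $a$. Hence both the quiver part and the potential part of the theorem are purely local statements.

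For the quiver equality $\mu_a(\overline{Q}(D))=\overline{Q}(\mu_R(D))$, I would first invoke Scott's compatibility result for $(Q(D),F)$ from \cite{S06}, which already takes care of the internal arrows. What remains is to verify that the clockwise external arrows (as in Figure~\ref{Figure: the local configuration of the boundary}(1)) match after the operation. I would split into two cases according to whether $R$ is an internal quadrilateral or has some side on the boundary of $D$. In the internal case the external arrows are not affected by $\mu_a$ at all, and the geometric exchange does not alter the boundary, so there is nothing to check. In the boundary case, one has to follow how pre-geometric exchange plus the untwisting moves rearrange the strands meeting the boundary near $R$, compare with how the pre-mutation composes paths through $a$ and removes $2$-cycles through $a$, and check that the resulting clockwise crossings on the boundary are the same on both sides. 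This is a finite check of a few explicit local pictures.

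For the potential equality $\mu_a(\overline{W}(D))=\overline{W}(\mu_R(D))$, the main step is to enumerate the fundamental cycles passing through $a$. Let the four arrows incident to $a$ be $\alpha_1,\alpha_2,\beta_1,\beta_2$ with $s(\alpha_i)=a$, $t(\beta_j)=a$, arising from the four crossings on $\partial R$. The fundamental cycles of $\overline{W}(D)$ that pass through $a$ are exactly those attached to the four oriented regions of $D$ sharing a side with $R$, each contributing a cycle of the form $\alpha_i\beta_j\cdot(\text{tail})$. Writing $\overline{W}(D)=W_a+W_{\text{rest}}$, where $W_a$ is the sum over these four cycles (with appropriate signs) and $W_{\text{rest}}$ collects everything else, the pre-mutation produces $\tilde W_1$, obtained by replacing each $\alpha_i\beta_j$ in $W_a$ by the new composite arrow $[\alpha_i\beta_j]$, plus the standard cubic correction $\tilde W_2=\sum_{i,j}[\alpha_i\beta_j]\beta_j^{*}\alpha_i^{*}$. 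I would next identify, in $\overline{Q}(\mu_R(D))$, which of the four ``composite'' arrows $[\alpha_i\beta_j]$ are already present as arrows of the new quiver coming from the new alternating regions around the flipped $R$, and which form $2$-cycles with pre-existing arrows in the opposite direction. For each such $2$-cycle, the reduction procedure of \cite[Theorem~3.6]{P18} substitutes the ``cancelling'' arrow by the appropriate cyclic derivative of $\tilde W_2$, which precisely produces (after collecting like terms) the fundamental cycles of the four oriented regions that appear around the flipped quadrilateral in $\mu_R(D)$. The cubic terms that actually survive the reduction are exactly the fundamental cycle(s) of the new quadrilateral region(s) created by the geometric exchange, so one obtains $\mu_a(\overline{W}(D))=\overline{W}(\mu_R(D))$ up to the choices of signed representatives of fundamental cycles.

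Finally, since the potentials $\overline{W}(D)$, $\overline{W}(\mu_R(D))$ are only well-defined up to the sign choices entering Definition~\ref{Potentials for quivers of Postnikov diagrams}, I would invoke Lemma~\ref{Lemma:right-equivalence} at the very end to absorb any discrepancy of signs into a right-equivalence, giving the stated equality up to right-equivalence. The main obstacle is the bookkeeping in the reduction step: one must verify that every $2$-cycle created by pre-mutation really does get cancelled in a way that regenerates exactly the fundamental cycles of the new diagram, including the subtle case where the untwisting move in $\mu_R$ merges two formerly distinct regions into one. I expect this last case to be where most of the work lies, and it is also where the external clockwise arrows defining $\overline Q(D)$ play their essential role, since they provide precisely the arrows needed to complete the boundary fundamental cycles that appear after untwisting.
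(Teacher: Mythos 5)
Your overall strategy --- localize at $R$, split the potential into the part meeting $a$ and the rest, apply the pre-mutation, cancel the resulting $2$-cycles via the reduction of \cite[Theorem 3.6]{P18}, and absorb signs with Lemma \ref{Lemma:right-equivalence} --- is exactly the route the paper takes. But two points in your sketch are inaccurate, and they are precisely where the paper's proof does its real work. First, it is not true that each of the four oriented regions adjacent to $R$ contributes a fundamental cycle of the form $\alpha_i\beta_j\cdot(\text{tail})$ to $\overline{W}(D)$: if such a corner region is a boundary region whose orientation is opposite to the clockwise boundary orientation, then no external arrow was added there in the construction of $\overline{Q}(D)$, so that corner contributes no cycle at all (the paper encodes this by the convention $length(p)=0$, configurations $(1.3)$ and $(3.3)$ of Figure \ref{Figure:theorem about IQP mutations and GE}). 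This is not a marginal case --- it is exactly the case in which the untwisting move occurs, and it is what distinguishes $\overline{Q}(D)$ from $\overline{\overline{Q}}(D)$, for which the compatibility fails (Remark \ref{rem:geo-ex-and-mut}).

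Second, the reduction step is not a purely local cancellation at $a$ of cyclic derivatives of $\widetilde{W}_2$. In the paper's computation the $2$-cycles produced by pre-mutation are $[\gamma\beta]q$ and $[\alpha\delta]s$, where $q$ and $s$ are pre-existing arrows \emph{not} incident to $a$; killing them requires the unitriangular automorphism $f([\gamma\beta])=[\gamma\beta]+t$, $f(q)=q-\beta^*\gamma^*$, $f(s)=s-\delta^*\alpha^*$, which drags in the neighboring fundamental cycle $qt$ of $\overline{W}(D)$ that does not pass through $a$. It is the interaction of the term $-qt$ with the cubic correction that produces the new boundary term $\beta^*\gamma^*t$, i.e.\ the fundamental cycle of the region created by untwisting. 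Your proposal flags this as ``where most of the work lies'' but does not carry it out, so as written it is a correct plan rather than a proof; completing it amounts to reproducing the paper's explicit case analysis in Figures \ref{Figure:theorem about IQP mutations and GE} and \ref{Figure:theorem about IQP mutations and GE 2}.
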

\begin{proof}
Note that $(Q(D),W(D))$ is the ``principal part'' of $(\overline{Q}(D),F(D),\overline{W}(D))$, where $Q(D)$ is the principal part quiver of $(\overline{Q}(D),F(D))$ and $W(D)$ is obtained from $\overline{W}(D)$ by deleting the potentials which contain the frozen arrows. Thus the statement (1) follows from the statement (2). So we only prove the statement (2).

Since $R$ is quadrilateral in $D$, there are four arrows $\alpha,\beta,\gamma,\delta$ in $(\overline{Q}(D),F(D))$ whose endings are the associated vertex $a$. On the other hand, since $a$ is exchangeable, these arrows are all  unfrozen.
Without loss of generality, we assume that $a=s(\alpha)=t(\beta)$. Let $\omega_r=\alpha\beta p$ be a fundamental cycle in $\overline{W}(D)$ corresponding to an oriented region $r$ of $D$ that contains both $\alpha$ and $\beta$, where $p$ is a path from $t(\alpha)$ to $s(\beta)$.

Up to rotations and reflections, there are essentially three possibilities of $\omega_r$, which are shown in Figure \ref{Figure:theorem about IQP mutations and GE}.
If $length(p)=1$, then the local configuration of $D$ is as shown in picture $(1)$ and picture $(3)$, where the arrow $p$ is unfrozen in $(1)$ while it is frozen in $(3)$.
If $length(p)>1$, then the local configuration is as shown in picture $(2)$, where the length of the path $p$ is at least two and it may contain frozen arrows.
\begin{figure}
\begin{center}
%
\qquad
{\begin{tikzpicture}[scale=1.2]
\draw[fill] (-1,0) circle [radius=0.025];
\draw[fill] (0,0) circle [radius=0.025];
\draw[fill] (0,1) circle [radius=0.025];
\draw[] (-0.5,0.4) node {r};
\draw[->,thick] (-0.95,0) -- (-0.05,0);
\draw[->,thick] (0,0.1) -- (0,0.9);
\draw[->,thick] (-0.1,1) arc [radius=0.9, start angle=90, end angle=180];

\draw[->,thin,dashed] (-0.8,-0.3) -- (0.3,0.8);
\draw[thin,dashed] (-0.9,1) to [out=-80,in=135] (-0.5,0);
\draw[->,thin,dashed] (-0.5,0) to [out=-45,in=160] (0,-0.3);
\draw[->,thin,dashed] (0,0.5) to [out=160,in=20] (-1.3,0.5);
\draw[thin,dashed](0.3,0.35) to [out=155,in=-20](0,0.5);

\draw (-0.55,-0.3) node {$\beta$};
\draw (0.3,0.55) node {$\alpha$};
\draw (-1.1,0.8) node {$p$};
\draw (0.1,-0.1) node {$a$};

\draw (-0.5,-1) node {(1)};
\end{tikzpicture}}
\qquad
{\begin{tikzpicture}[scale=1.2]
\draw[fill] (-1,0) circle [radius=0.025];
\draw[fill] (0,0) circle [radius=0.025];
\draw[fill] (0,1) circle [radius=0.025];
\draw[] (-0.5,0.5) node {r};
\draw[->,thick] (-0.95,0) -- (-0.05,0);
\draw[->,thick] (0,0.1) -- (0,0.9);
\draw[->,thick,dashed] (-0.1,1) arc [radius=0.9, start angle=90, end angle=180];

\draw[->,thin,dashed] (-0.8,-0.3) -- (0.3,0.8);
\draw[<-,thin,dashed] (0,-0.5) -- (-1.3,0.8);
\draw[->,thin,dashed] (0.5,0) -- (-0.8,1.3);

\draw (-0.55,-0.3) node {$\beta$};
\draw (0.3,0.55) node {$\alpha$};
\draw (-0.9,0.8) node {$p$};
\draw (0.1,-0.1) node {$a$};

\draw (-0.5,-1) node {(2)};
\end{tikzpicture}}
\qquad
{\begin{tikzpicture}[scale=1.2]
\draw[fill] (-1,0) circle [radius=0.025];
\draw[fill] (0,0) circle [radius=0.025];
\draw[fill] (0,1) circle [radius=0.025];
\draw[] (-0.5,0.4) node {r};
\draw[->,thick] (-0.95,0) -- (-0.05,0);
\draw[->,thick] (0,0.1) -- (0,0.9);
\draw[->,red,thick] (-0.1,1) arc [radius=0.9, start angle=90, end angle=180];

\draw[->,thin,dashed] (-0.8,-0.3) -- (0.3,0.8);
\draw[thin,dashed] (-0.75,0.7) to [out=-80,in=135] (-0.5,0);
\draw[->,thin,dashed] (-0.5,0) to [out=-45,in=160] (0,-0.3);
\draw[->,thin,dashed] (0,0.5) to [out=150,in=0] (-0.75,0.7);
\draw[thin,dashed](0.3,0.3) to [out=140,in=-30](0,0.5);

\draw (-0.55,-0.3) node {$\beta$};
\draw (0.3,0.55) node {$\alpha$};
\draw (-0.9,0.8) node {$p$};
\draw (0.1,-0.1) node {$a$};

\draw[->,thin](-0.8,1) to [out=60,in=195](-0.5,1.2);
\draw (-0.5,-1) node {(3)};
\end{tikzpicture}}
\end{center}
\begin{center}
\caption{Local configuration of a fundamental circle through the vertex $a$. The reflections and the rotations are also allowed. The $p$ in the first picture is an unfrozen arrow, while the $p$ in the third picture is a frozen arrow. The $p$ in the second picture is a path with length at least two.}\label{Figure:theorem about IQP mutations and GE}
\end{center}
\end{figure}
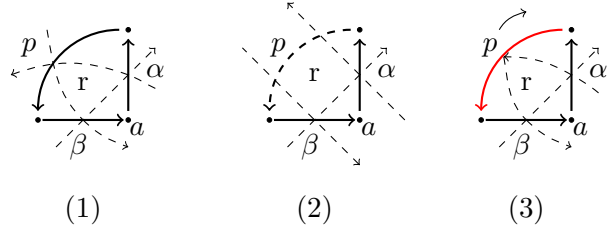

\begin{figure}
\begin{center}
{\begin{tikzpicture}[scale=1.75]
\draw[fill] (0,0) circle [radius=0.025];
\draw[fill] (-1,0) circle [radius=0.025];
\draw[fill] (0,1) circle [radius=0.025];
\draw[fill] (0,-1) circle [radius=0.025];
\draw[fill] (1,0) circle [radius=0.025];

\draw[->,thick] (-0.95,0) -- (-0.05,0);
\draw[->,thick] (0,0.05) -- (0,0.95);
\draw[->,thick] (0,-0.05) --(0,-0.95);
\draw[->,thick] (0.95,0) --(0.05, 0);
\draw[->,thick] (-0.05,-1) to [out=180,in=-90](-1,-0.05);
\draw[->,thick,dashed] (-1.05,-0.05)..controls(-1.25,-1.25)..(-0.05,-1.05);
\draw[<-,thick,dashed] (1.05,-0.05)..controls(1.25,-1.25)..(0.05,-1.05);

\draw[red,thin,dashed] (-0.5,0) -- (0,0.5);
\draw[red,thin,dashed] (-0.5,0) to [out=-135,in=90](-0.75,-0.7);
\draw[->,red,thin,dashed](0,0.5) to [out=45,in=180](0.75,0.7);
\draw[red,thin,dashed] (-0.75,-0.7)to(-0.75,-1) ;

\draw[thin,dashed](0,-0.5) to (0.5,0);
\draw[thin,dashed] (-0.75,-0.7) to [out=0,in=-135] (0,-0.5);
\draw[thin,dashed] (0.75,0.7) to [out=-90,in=45] (0.5,0);
\draw[->,thin,dashed] (-0.75,-0.7)to(-1,-0.7) ;

\draw[->,green,thin,dashed] (-0.5,0) to (0.5,-1);
\draw[green,thin,dashed] (-0.75,0.7) to [out=-90,in=135] (-0.5,0);

\draw[blue,thin,dashed](0,0.5) to (1,-0.5);
\draw[->,blue,thin,dashed] (0,0.5) to [out=135,in=0] (-0.75,0.7);

\draw (-0.5,-0.15) node {$\beta$};
\draw (-0.15,0.5) node {$\alpha$};
\draw (0.5,0.15) node {$\delta$};
\draw (0.15,-0.5) node {$\gamma$};
\draw (0.1,0.1) node {$a$};

\draw[->,red,thick] (0.05,1) to [out=0,in=90] (1,0.05);
\draw[->,red,thick] (-0.05,1) to [out=180,in=90] (-1,0.05);
\draw (0.85,0.8) node {$\zeta$};
\draw (-0.85,0.8) node {$\eta$};
\draw (-0.85,-0.8) node {$\xi$};
\draw (1,-1) node {$s$};
\draw (-1,-1) node {$t$};

\draw[->,thin](.5,1.2) to [out=-10,in=120](0.8,1);

\draw[->,thick] (1.5,0) to (3.5,0);
\node[] (C) at (2.5,0.2)
						{\mbox{pre-mutations}};
\node[] (C) at (2.5,-0.2)
						{\mbox{pre-geometric}};
\node[] (C) at (2.5,-0.4)
						{\mbox{exchanges}};
\node[] (C) at (0,-1.8)
						{$(D,\overline{Q}(D),F(D))$};
\end{tikzpicture}}
\qquad
{\begin{tikzpicture}[scale=1.75]
\draw[fill] (0,0) circle [radius=0.025];
\draw[fill] (-1,0) circle [radius=0.025];
\draw[fill] (0,1) circle [radius=0.025];
\draw[fill] (0,-1) circle [radius=0.025];
\draw[fill] (1,0) circle [radius=0.025];

\draw[<-,thick] (-0.95,0) -- (-0.05,0);
\draw[<-,thick] (0,0.05) -- (0,0.95);
\draw[<-,thick] (0,-0.05) --(0,-0.95);
\draw[<-,thick] (0.95,0) --(0.05, 0);

\draw[->,thick] (-0.95,0.05) -- (-0.05,0.95);
\draw[->,thick] (0.95,0.05) -- (0.05,0.95);
\draw[->,thick] (-0.95,-0.05) -- (-0.05,-0.95);
\draw[->,thick] (0.95,-0.05) -- (0.05,-0.95);

\draw[->,thick] (0.05,1) to [out=0,in=90] (1,0.05);
\draw[->,thick] (-0.05,-1) to [out=180,in=-90](-1,-0.05);

\draw[->,thick,dashed] (-1.05,-0.05)..controls(-1.25,-1.25)..(-0.05,-1.05);
\draw[<-,thick,dashed] (1.05,-0.05)..controls(1.25,-1.25)..(0.05,-1.05);

\draw[red,thin,dashed] (-0.75,-0.7)to(-0.75,-1);
\draw[red,dashed,thin] (-0.75,-0.7) to [out=90,in=180](-0.5,-0.5);
\draw[red,dashed,thin] (0.5,0) to [out=-90,in=0](0,-0.5);
\draw[red,dashed] (0.5,0) to(0.5,0.5);
\draw[red,dashed] (-0.5,-0.5) to(0,-0.5);
\draw[->,red,dashed,thin] (0.5,0.5) to [out=90,in=180](0.75,0.7);

\draw[->,thin,dashed] (-0.75,-0.7)to(-1,-0.7) ;
\draw[dashed,thin] (-0.75,-0.7) to [out=0,in=-90](-0.5,-0.5);
\draw[dashed,thin](0.5,0.5) to [out=0,in=-90](0.75,0.7);
\draw[dashed,thin] (-0.5,0) to [out=90,in=180](0,0.5);
\draw[dashed] (0.5,0.5) to(0,0.5);
\draw[dashed] (-0.5,-0.5) to(-0.5,0);

\draw[blue,dashed,thin] (-0.5,0) to [out=-90,in=180](0,-0.5);
\draw[blue,dashed] (-0.5,0) to(-0.5,0.5);
\draw[blue,dashed] (0.5,-0.5) to(0,-0.5);
\draw[blue,dashed] (0.5,-0.5) to(0.8,-0.5);
\draw[->,blue,dashed,thin] (-0.5,0.5) to [out=90,in=0](-0.75,0.7);

\draw[green,dashed,thin] (0.5,0) to [out=90,in=0](0,0.5);
\draw[green,dashed] (-0.5,0.5) to(0,0.5);
\draw[green,dashed] (0.5,0) to(0.5,-0.5);
\draw[->,green,dashed] (0.5,-0.5) to(0.5,-0.8);
\draw[green,dashed,thin] (-0.5,0.5) to [out=180,in=270](-0.75,0.7);

\draw (-0.15,0.3) node {$\alpha^*$};
\draw (-0.3,-0.15) node {$\beta^*$};
\draw (0.15,-0.3) node {$\gamma^*$};
\draw (0.3,0.15) node {$\delta^*$};
\draw (0.78,0.38) node {{\bahao $[\alpha\delta]$}};
\draw (0.78,-0.38) node {{\bahao $[\gamma\delta]$}};
\draw (-0.78,0.38) node {{\bahao $[\alpha\beta]$}};
\draw (-0.78,-0.38) node {{\bahao $[\gamma\beta]$}};

\draw[->,red,thick] (0.05,1) to [out=0,in=90] (1,0.05);
\draw[->,red,thick] (-0.05,1) to [out=180,in=90] (-1,0.05);
\draw (0.85,0.8) node {$\zeta$};
\draw (-0.85,0.8) node {$\eta$};
\draw (-0.85,-0.8) node {$\xi$};
\draw (1,-1) node {$s$};
\draw (-1,-1) node {$t$};

\draw[->,thin](.5,1.2) to [out=-10,in=120](0.8,1);


\node[] (C) at (0,-1.8)
						{$(\widetilde{\mu}_R(D),\widetilde{\mu}_{a}(\overline{Q}(D)),\widetilde{\mu}_{a}(F(D)))$};
\end{tikzpicture}}
\qquad\\
\qquad\\
{\begin{tikzpicture}[scale=1.8]
\draw[fill] (0,0) circle [radius=0.025];
\draw[fill] (-1,0) circle [radius=0.025];
\draw[fill] (0,1) circle [radius=0.025];
\draw[fill] (0,-1) circle [radius=0.025];
\draw[fill] (1,0) circle [radius=0.025];

\draw[<-,thick] (-0.95,0) -- (-0.05,0);
\draw[<-,thick] (0,0.05) -- (0,0.95);
\draw[<-,thick] (0,-0.05) --(0,-0.95);
\draw[<-,thick] (0.95,0) --(0.05, 0);

\draw[->,thick,dashed] (-1.05,-0.05)..controls(-1.25,-1.25)..(-0.05,-1.05);
\draw[<-,thick,dashed] (1.05,-0.05)..controls(1.25,-1.25)..(0.05,-1.05);

\draw[<-,red,thick] (0.05,1) to [out=0,in=90] (1,0.05);
\draw[<-,red,thick] (-0.05,1) to [out=180,in=90] (-1,0.05);
\draw[->,thick](1,-0.05) to [out=-90,in=0] (0.05,-1);

\draw[red,dashed] (-0.3,-0.8) to(0.5,0);
\draw[<-,red,thin,dashed] (0.75,0.7) to [out=-90,in=45] (0.5,0);

\draw[<-,dashed] (-0.8,-0.3) to(0,0.5);
\draw[thin,dashed](0,0.5) to [out=45,in=180](0.75,0.7);

\draw[green,thin,dashed](-0.75,0.7) to [out=0,in=135] (0,0.5);
\draw[green,thin,dashed] (0.5,0) -- (0,0.5);
\draw[green,thin,dashed] (0.5,0) to [out=-45,in=90](0.75,-0.7);
\draw[->,green,thin,dashed] (0.75,-0.7)to(0.75,-1) ;

\draw[blue,thin,dashed] (1,-0.7) to (0.75,-0.7);
\draw[blue,thin,dashed](0.75,-0.7) to [out=180,in=-45] (0,-0.5);
\draw[blue,thin,dashed](0,-0.5) to (-0.5,0);
\draw[->,blue,thin,dashed] (-0.5,0) to [out=135,in=-90] (-0.75,0.7);

\draw (-0.5,-0.2) node {$\beta^*$};
\draw (-0.17,0.5) node {$\alpha^*$};
\draw (0.5,0.18) node {$\delta^*$};
\draw (0.18,-0.5) node {$\gamma^*$};
\draw (0.9,-0.85) node {{\bahao $[\gamma\delta]$}};
\draw (0.95,0.75) node {{\bahao $[\alpha\delta]$}};
\draw (-0.85,0.8) node {{\bahao $[\alpha\beta]$}};
\draw (1,-1) node {$s$};
\draw (-1,-1) node {$t$};

\draw[->,thin](.5,1.2) to [out=-10,in=120](0.8,1);

\draw[<-,thick] (-1.5,0) to (-3.5,0);
\node[] (C) at (-2.5,0.4)
						{\mbox{reduction:}};
\node[] (C) at (-2.5,0.2)
						{\mbox{$2$-cycles moves}};
\node[] (C) at (-2.5,-0.2)
						{\mbox{untwisting moves}};
\node[] (C) at (0,-1.8)
						{$({\mu}_R(D),\mu_{a}(\overline{Q}(D)),\mu_{a}(F(D))$};
\end{tikzpicture}}
\end{center}
\begin{center}
\caption{Mutations and geometric exchanges}\label{Figure:theorem about IQP mutations and GE 2}
\end{center}
\end{figure}
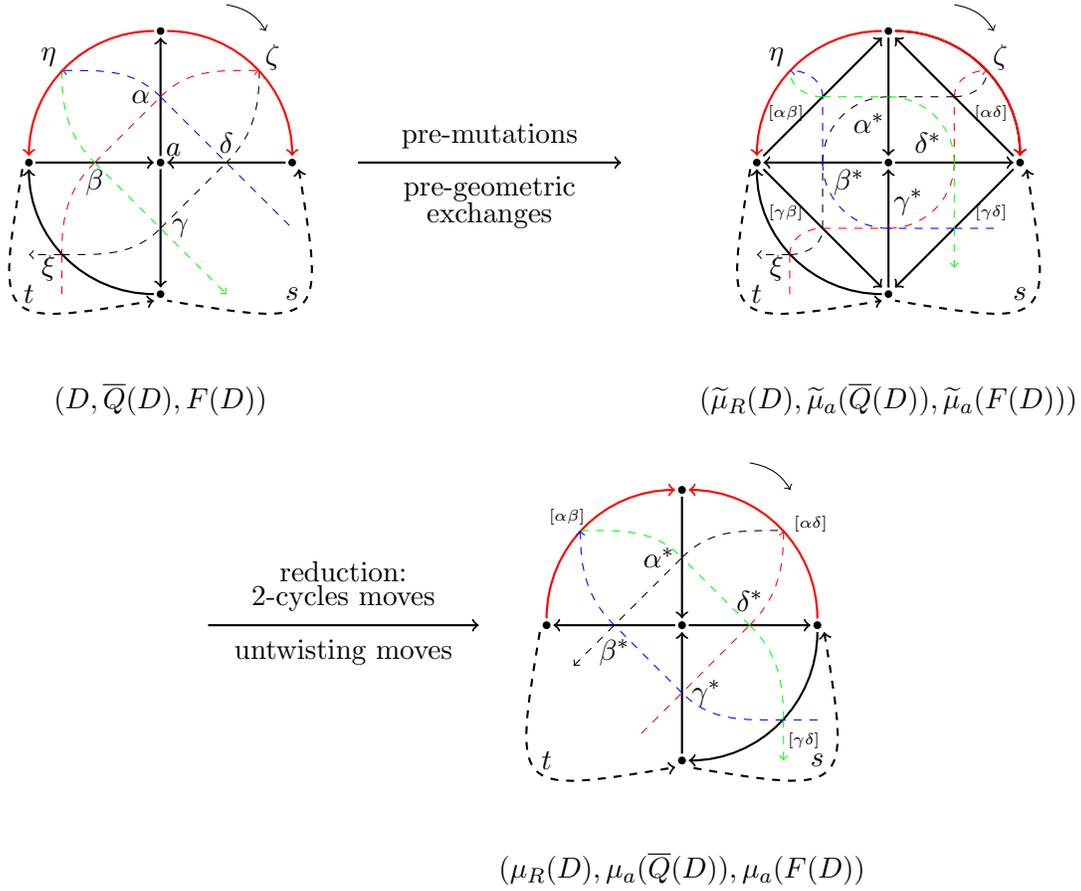

Now we consider the local configuration of $D$ around $a$ depicted in the first picture of Figure \ref{Figure:theorem about IQP mutations and GE 2}, which contains all the above three possibilities. We only prove the result for this situation. Other situations can be proved similarly.

Up to a cyclical equivalence, we may write the potential
$$\overline{W}(D)=\zeta\alpha \delta-s\gamma\delta + \xi\gamma \beta-\eta\alpha\beta-\xi t+\overline{W}'(D) ,$$
where $length(s)\geqslant 2$, $length(t)\geqslant 2$ and each cycle in $\overline{W}'(D)$  does not contain any arrow in $\alpha,\beta,\gamma,\delta,\xi,\zeta$, and $\eta$. Then by a pre-geometric exchange $\widetilde{\mu}_R$ on $D$ and a pre-mutation $\widetilde{\mu}_a$ on $(\overline{Q}(D),F(D))$, we obtain the second picture in Figure \ref{Figure:theorem about IQP mutations and GE 2}.
Note that the new arrows appearing in the iced quiver $\widetilde{\mu}_{a}(\overline{Q}(D)),\widetilde{\mu}_{a}(F(D))$ are all unfrozen. Meanwhile, by applying the pre-mutation $\widetilde{\mu}_a$ on $\overline{W}(D)$, we get a new potential
$$\begin{array}{rcl}
				\widetilde{\mu}_a(\overline{W}(D)) & = & \zeta[\alpha\delta]+\delta^*\alpha^*[\alpha\delta]-s[\gamma\delta]-\delta^*\gamma^*[\gamma\delta]
+\xi[\gamma\beta]\\
				    &  & +\beta^*\gamma^*[\gamma\beta]-\eta[\alpha\beta]-\beta^*\alpha^*[\alpha\beta]
-\xi t+\overline{W}'(D).
			\end{array}$$
Note that $\overline{W}'(D)$ is not changed because any arrow appearing in each potential of $\overline{W}'(D)$ is not involved in $a$.

Recall the processes of the reduction of an IQP stated in Section \ref{subsec:QP}, to reduce the IQP $(\widetilde{\mu}_a(\overline{Q}(D)),\widetilde{\mu}_{a}(F(D)),\widetilde{\mu}_a(\overline{W}(D)))$, we should firstly find a right-equivalence and use it to rewrite the potential as the canonical form \eqref{eq:potential}. Let us consider a unitriangular automorphism $\phi$ on $\C\langle\langle\widetilde{\mu}_a(\overline{Q}(D))\rangle\rangle$, where
$$\phi([\gamma\beta])=[\gamma\beta]+t,\phi(\xi)=\xi-\beta^*\gamma^*,\phi([\alpha\beta])=-[\alpha\beta],\phi(u)=u$$
for other arrows $u$ in $\widetilde{\mu}_a((\overline{Q}(D))$.
Then
$$\begin{array}{rcl}
				\phi(\widetilde{\mu}_a(\overline{W}(D))) & = & \xi[\gamma\beta]\\
				 &   & +[\alpha\delta](\zeta+\delta^*\alpha^*)+[\alpha\beta]
(\eta+\beta^*\alpha^*)\\
				 &   & +\beta^*\gamma^*t-s[\gamma\delta]+\delta^*\gamma^*[\gamma\delta]+\overline{W}'(D).
			\end{array}$$
On the one hand, note that $\phi$ gives a right equivalence between the IQPs
$$(\widetilde{\mu}_a(\overline{Q}(D)),\widetilde{\mu}_a(F(D)),\widetilde{\mu}_a(\overline{W}(D)))$$
and $$(\widetilde{\mu}_a(\overline{Q}(D)),\widetilde{\mu}_a((F(D)),\phi(\widetilde{\mu}_a(\overline{W}(D)))),$$ in particular, $$\phi(\C\langle \langle F(D)\rangle\rangle)=\C\langle \langle \tilde{\mu}_a(F(D))\rangle\rangle.$$
On the other hand, $\phi(\widetilde{\mu}_a(\overline{W}(D)))$ is of the canonical form \eqref{eq:potential} with the reduced part
$$\phi(\widetilde{\mu}_a(\overline{W}(D)))_\mathrm{red}=[\alpha\delta]\delta^*\alpha^*+[\alpha\beta]\beta^*\alpha^*
+\beta^*\gamma^*t-s[\gamma\delta]+\delta^*\gamma^*[\gamma\delta]+\overline{W}'(D).$$

Then after the reduction, we get the mutation
$$(\mu_a(\overline{Q}(D)),\mu_a(F(D)),\mu_a(\overline{W}(D))),$$
where the iced quiver $(\mu_a(\overline{Q}(D)),\mu_a(F(D)))$ is obtained from $(\widetilde{\mu}_a(\overline{Q}(D)),\widetilde{\mu}_a(F(D)))$
by deleting the arrows $\xi$, $[\gamma\beta]$, $\zeta$ and $\eta$, and freezing the arrows $[\alpha\beta]$ and $[\alpha\delta]$.
Note that this iced quiver is exactly the iced quiver of the final Postnikov diagram $\mu_{R}(D)$ depicted in the third picture of Figure \ref{Figure:theorem about IQP mutations and GE 2}, that is,
we have $\overline{Q}(\mu_{R}(D))=\mu_a(\overline{Q}(D))$.

On the other hand, by the definition,
$$\overline{W}(\mu_R(D))=-[\alpha\delta]\delta^*\alpha^*+[\alpha\beta]\beta^*\alpha^*
-\beta^*\gamma^*t-s[\gamma\delta]+\delta^*\gamma^*[\gamma\delta]+\overline{W}'(D).$$
And by the Lemma \ref{Lemma:right-equivalence}, there is a sign changing of arrows $\psi$ on $\C\langle\langle\mu_a(\overline{Q}(D))\rangle\rangle$ such that $$\psi(\mu_a(\overline{W}(D)))=\psi(\phi(\widetilde{\mu}_a(\overline{W}(D)))_{red})=\overline{W}(\mu_{R}(D))$$
up to the equality $\mu_a(\overline{Q}(D))=\overline{Q}(\mu_{R}(D))$.
So by the right-equivalent $\psi \phi$, we obtain the final mutation $\mu_{a}(\overline{Q}(D),F(D),\overline{W}(D))$, as well as the wanted equalities $$(\mu_{a}(\overline{Q}(D)),\mu_{a}(F(D))=(\overline{Q}(\mu_{R}(D)),F(\mu_{R}(D))) \mbox{~and~} \mu_{a}(\overline{W}(D))=\overline{W}(\mu_{R}(D)).$$
\end{proof}

The compatibility stated in the above theorem ensures the following definition.
\begin{definition}\label{def:QP for Grassmannian cluster algebra}
Let $D$ be any $(k,n)$-Postnikov diagram and let $\C[Gr(k,n)]$ be the Grassmannian cluster algebra. We call
\begin{itemize}
\item a QP which is mutation equivalent to $(Q(D),W(D))$ a QP of $\C[Gr(k,n)]$, and denote it by $(Q,W)$;
\item an IQP which is mutation equivalent to $(\overline{Q}(D),F(D),\overline{W}(D))$ an IQP of $\C[Gr(k,n)]$, and denote it by $(\overline{Q},F,\overline{W})$.
\end{itemize}
\end{definition}

\subsection{Rigidity and finite dimension}\label{sec:rigidity}
We prove in this subsection that each QP of a Grassmannian cluster algebra $\C[Gr(k,n)]$ is rigid and Jacobi-finite.
Recall that a QP $(Q,W)$ is said to be $2$-acyclic if there are no $2$-cycles in the quiver. Note that there may appear $2$-cycles in the quiver of $\mu_i(Q,W)$ after mutations, even if $(Q,W)$ is $2$-acyclic. If all possible iterations of mutations are $2$-acyclic, then we say $(Q,W)$ is {\em non-degenerate}. We call $(Q,W)$ {\em rigid} if every cycle in $Q$ is cyclically equivalent to an element of the Jacobian ideal $J(Q,W)$. It is known that a rigid QP is always non-degenerate. We call $(Q,W)$ {\em Jacobi-finite} if the Jacobian algebra $\mathcal{P}(Q,W)$ is finite dimensional.

For the further study, we need some special Postnikov diagram, see Figure \ref{Figure:initial Postnikov diagram}, where the diagrams depend on the parities of $k$ and $n$, and any pair $(k,n)$ matches a unique diagram shown in these figures. These diagrams are of special importance, they are used by Scott as the initial diagrams which give the initial quivers of the Grassmannian cluster algebras \cite{S06}.

Denote by $(\overline{Q}_{\textrm{ini}},F_{\textrm{ini}},\overline{W}_{\textrm{ini}})$ and $(Q_{\textrm{ini}},W_{\textrm{ini}})$ the IQP and the QP associated to the initial Postnikov diagram respectively. In the following we always assume that $k$ and $n$ are both odd. The statements and the proofs for the other cases are similar. The quiver $Q$ is a kind of grid as showed in Figure \ref{Figure:initial quiver1}, where we endow the points of the quiver with \emph{coordinates}, and label the position of a fundamental cycle by its row Rj and column Ci. For example, the left bottom fundamental cycle lies at R1 row and C1 column. We denote by $a(i,j)$ the vertex with coordinate $(i,j)$.

Let $l$ be a path which forms a cycle in $Q_{\textrm{ini}}$. Let $a(i_1,j_1)$ be a vertex on $l$, we say it is a \emph{leftmost vertex} of $l$ if $i_1\leqslant i$ for any vertex $a(i,j)$ on $l$. Similarly, we define the \emph{rightmost vertex}, \emph{lowest vertex} and \emph{highest vertex} of $l$ as $a(i_2,j_2)$, $a(i_3,j_3)$ and $a(i_4,j_4)$ respectively. We call $width(l)=i_2-i_1$ the {\em width} of $l$, and call $height(l)=j_4-j_3$ the {\em height} of $l$.

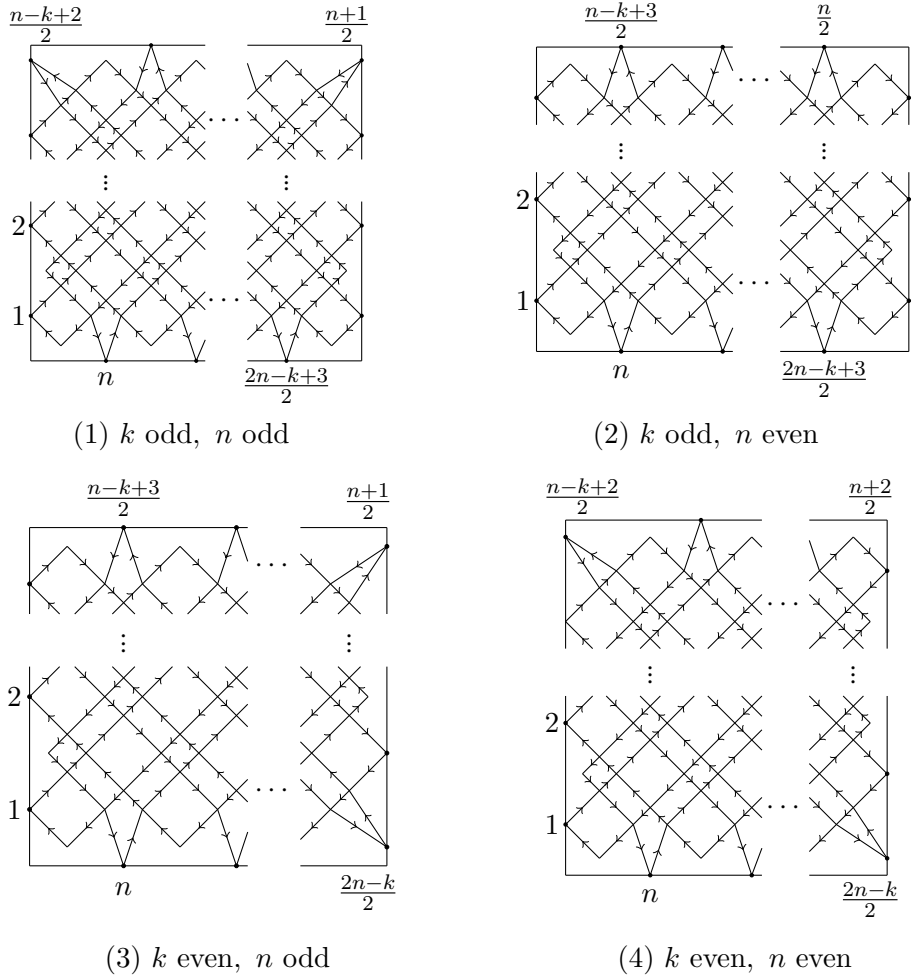
\begin{figure}
\begin{center}
{\begin{tikzpicture}[scale=0.04]

\foreach \x in {0,30cm,90cm}
\foreach \y in {0,30cm,60cm}
\draw[->,xshift=\x,yshift=\y](4.9,24.9) -- (5,25);

\foreach \x in {0,30cm,60cm}
\foreach \y in {0,60cm}
\draw[->,xshift=\x,yshift=\y](19.9,39.9) -- (20,40);

\foreach \x in {0,30cm}
\foreach \y in {0,60cm}
\draw[->,xshift=\x,yshift=\y](11.5,31.5) -- (12.5,32.5);
\draw[->,xshift=90cm](11.5,31.5) -- (12.5,32.5);

\foreach \x in {0,60cm}
\foreach \y in {0,30cm}
\draw[->,xshift=\x,yshift=\y](27.4,47.4) -- (27.5,47.5);

\foreach \x in {0,60cm}
\foreach \y in {0}
\draw[->,xshift=\x,yshift=\y](28,14.2) -- (28.1,14.5);

\foreach \x in {0,30cm,60cm,90cm}
\foreach \y in {0,30cm,60cm}
\draw[->,xshift=\x,yshift=\y](14.9,25.1) -- (15,25);

\foreach \x in {0,30cm,90cm}
\foreach \y in {0}
\draw[->,xshift=\x,yshift=\y](7.4,32.6) -- (7.5,32.5);

\foreach \x in {0,60cm}
\foreach \y in {0,60cm}
\draw[->,xshift=\x,yshift=\y](29.9,40.1) -- (30,40);

\foreach \x in {0,60cm}
\foreach \y in {0,30cm,60cm}
\draw[->,xshift=\x,yshift=\y](27.6,22.4) -- (27.5,22.5);

\foreach \x in {0,30cm,60cm}
\foreach \y in {0,-30cm}
\draw[->,xshift=\x,yshift=\y](22.4,77.6) -- (22.5,77.5);

\foreach \x in {0,60cm}
\foreach \y in {0}
\draw[->,xshift=\x,yshift=\y](37.4,92.6) -- (37.5,92.5);
\foreach \x in {0,30cm,90cm}
\foreach \y in {0,30cm,60cm}
\draw[->,xshift=\x,yshift=\y](5.1,14.9) -- (5,15);

\foreach \x in {0,30cm,60cm}
\foreach \y in {0,60cm}
\draw[->,xshift=\x,yshift=\y](20.1,29.9) -- (20,30);

\foreach \x in {0,30cm,90cm}
\foreach \y in {0}
\draw[->,xshift=\x,yshift=\y](12.6,37.4) -- (12.5,37.5);

\foreach \x in {0,30cm,60cm,90cm}
\foreach \y in {0,30cm,60cm}
\draw[<-,xshift=\x,yshift=\y](15,15) -- (15.1,15.1);

\foreach \x in {0,30cm,60cm}
\foreach \y in {0,30cm,60cm}
\draw[<-,xshift=\x,yshift=\y](22.5,22.5) -- (22.6,22.6);

\foreach \x in {0,60cm}
\foreach \y in {0,60cm}
\draw[->,xshift=\x,yshift=\y](30.1,30.1) -- (30,30);

\foreach \x in {0,30cm,90cm}
\foreach \y in {0}
\draw[->,xshift=\x,yshift=\y](7.6,37.6) -- (7.5,37.5);
\draw[<-] (37,101)--(37.1,101.3);

\foreach \x in {0,30cm,60cm}
\draw[<-,xshift=\x] (22.5,12.5)--(22.4,12.8);

\foreach \x in {0}
\draw[<-,xshift=\x] (42.5,102.5)--(42.6,102.2);

\draw[->] (7.7,99.9)--(7.5,100);
\draw[->] (5.25,97.2)--(5.4,97);

\draw[<-] (102.2,99.8)--(102.5,100);
\draw[->] (105.3,97.9)--(105.4,98);

\draw[] (0,5)--(0,58);
\draw[] (0,72)--(0,110);
\draw[] (0,5)--(58,5);
\draw[] (72,5)--(110,5);
\draw[] (0,110)--(58,110);
\draw[] (72,110)--(110,110);
\draw[] (110,5)--(110,58);
\draw[] (110,72)--(110,110);
\node[] (C) at (65,25)
						{$\cdots$};
\node[] (C) at (65,85)
						{$\cdots$};
\node[] (C) at (25,62)
						{$\cdot$};
\node[] (C) at (25,64)
						{$\cdot$};
\node[] (C) at (25,66)
						{$\cdot$};
\node[] (C) at (85,62)
						{$\cdot$};
\node[] (C) at (85,64)
						{$\cdot$};
\node[] (C) at (85,66)
						{$\cdot$};

\draw[] (0,20)--(38,58);
\draw[] (52,72)--(58,78);
\draw[] (72,92)--(85,105);
\draw[] (85,105)--(110,80);

\node[] (C) at (-4,20)
						{$1$};
\draw[] (110,80)--(102,72);
\draw[] (88,58)--(72,42);
\draw[] (58,28)--(40,10);
\draw[] (40,10)--(0,50);
\draw[] (0,50)--(8,58);
\draw[] (22,72)--(55,105);
\draw[] (55,105)--(58,102);
\draw[] (72,88)--(88,72);
\draw[] (102,58)--(110,50);
\draw[fill] (0,20) circle [radius=0.5];
\draw[fill] (0,50) circle [radius=0.5];
\draw[fill] (0,80) circle [radius=0.5];
\draw[fill]  (110,50)circle [radius=0.5];
\draw[fill]  (110,80)circle [radius=0.5];
\draw[fill]  (40,110)circle [radius=0.5];
\draw[fill] (110,105) circle [radius=0.5];
\draw[fill] (110,20) circle [radius=0.5];
\draw[fill] (0,105) circle [radius=0.5];
\draw[fill] (55,5) circle [radius=0.5];
\draw[fill] (85,5) circle [radius=0.5];
\draw[fill] (25,5) circle [radius=0.5];

\node[] (C) at (-4,50)
						{$2$};
\draw[] (110,50)--(72,12);
\draw[] (58,22)--(22,58);
\draw[] (8,72)--(0,80);
\draw[] (0,80)--(25,105);
\draw[](25,105)--(58,72);
\draw[](72,58)--(110,20);
\draw[](110,20)--(100,10);
\draw[](100,10)--(72,38);
\draw[](58,52)--(52,58);
\draw[](38,72)--(15,95);
\draw[](15,95)--(0,105);

\node[] (C) at (85,-3)
						{$\frac{2n-k+3}{2}$};
\draw[](0,105)--(10,90);
\draw[](10,90)--(28,72);
\draw[](42,58)--(58,42);
\draw[](72,28)--(80,20);
\draw[](80,20)--(85,5);

\node[] (C) at (5,117)
						{$\frac{n-k+2}{2}$};
\draw[](85,5)--(90,20);
\draw[](90,20)--(105,35);
\draw[](105,35)--(82,58);
\draw[](45,95)--(58,82);
\draw[](45,95)--(40,110);
\draw[](40,110)--(35,95);
\draw[](35,95)--(12,72);
\draw[](12,58)--(50,20);
\draw[](50,20)--(55,5);
\draw[](55,5)--(58,12);
\draw[](72,32)--(98,58);
\draw[](98,72)--(75,95);
\draw[](75,95)--(72,104);
\draw[](58,88)--(42,72);
\draw[](28,58)--(5,35);
\draw[](5,35)--(20,20);
\draw[](20,20)--(25,5);

\node[] (C) at (25,-1)
						{$n$};
\draw[](25,5)--(30,20);
\draw[](30,20)--(58,48);
\draw[](82,72)--(100,90);
\draw[](100,90)--(110,105);
\draw[](110,105)--(95,95);
\draw[](95,95)--(72,72);
\draw[](58,58)--(10,10);
\draw[](10,10)--(0,20);

\node[] (C) at (105,117)
						{$\frac{n+1}{2}$};
\node[] (C) at (50,-20)
						{$(1)~k~\mbox{odd},~n~\mbox{odd}$};
\end{tikzpicture}}
\qquad\qquad
{\begin{tikzpicture}[scale=0.045]


\foreach \x in {0,30cm,90cm}
\foreach \y in {0,30cm,60cm}
\draw[->,xshift=\x,yshift=\y](4.9,24.9) -- (5,25);

\foreach \x in {0,30cm,60cm}
\foreach \y in {0}
\draw[->,xshift=\x,yshift=\y](19.9,39.9) -- (20,40);

\foreach \x in {0,30cm}
\foreach \y in {0}
\draw[->,xshift=\x,yshift=\y](11.5,31.5) -- (12.5,32.5);
\draw[->,xshift=90cm](11.5,31.5) -- (12.5,32.5);

\foreach \x in {0,60cm}
\foreach \y in {0,30cm}
\draw[->,xshift=\x,yshift=\y](27.4,47.4) -- (27.5,47.5);

\foreach \x in {0,60cm}
\foreach \y in {0}
\draw[->,xshift=\x,yshift=\y](27.4,12.2) -- (27.5,12.5);

\foreach \x in {0,30cm,60cm,90cm}
\foreach \y in {0,30cm,60cm}
\draw[->,xshift=\x,yshift=\y](14.9,25.1) -- (15,25);

\foreach \x in {0,30cm,90cm}
\foreach \y in {0}
\draw[->,xshift=\x,yshift=\y](7.4,32.6) -- (7.5,32.5);

\foreach \x in {0,60cm}
\foreach \y in {0}
\draw[->,xshift=\x,yshift=\y](29.9,40.1) -- (30,40);

\foreach \x in {0,30cm,60cm}
\foreach \y in {0,-30cm}
\draw[->,xshift=\x,yshift=\y](22.4,77.6) -- (22.5,77.5);

\foreach \x in {0,30cm,90cm}
\foreach \y in {0,30cm,60cm}
\draw[->,xshift=\x,yshift=\y](5.1,14.9) -- (5,15);

\foreach \x in {0,30cm,60cm}
\foreach \y in {0}
\draw[->,xshift=\x,yshift=\y](20.1,29.9) -- (20,30);

\foreach \x in {0,30cm,90cm}
\foreach \y in {0}
\draw[->,xshift=\x,yshift=\y](12.6,37.4) -- (12.5,37.5);

\foreach \x in {0,60cm}
\foreach \y in {0,30cm}
\draw[->,xshift=\x,yshift=\y](27.6,22.4) -- (27.5,22.5);

\foreach \x in {0,30cm,60cm,90cm}
\foreach \y in {0,30cm,60cm}
\draw[<-,xshift=\x,yshift=\y](15,15) -- (15.1,15.1);

\foreach \x in {0,30cm,60cm}
\foreach \y in {0,30cm}
\draw[<-,xshift=\x,yshift=\y](22.5,22.5) -- (22.6,22.6);

\foreach \x in {0,60cm}
\foreach \y in {0}
\draw[->,xshift=\x,yshift=\y](30.1,30.1) -- (30,30);

\foreach \x in {0,30cm,90cm}
\foreach \y in {0}
\draw[->,xshift=\x,yshift=\y](7.6,37.6) -- (7.5,37.5);

\foreach \x in {0,30cm,60cm}
\draw[<-,xshift=\x] (22.8,11.9)--(22.4,12.8);

\foreach \x in {0,60cm}
\draw[->,xshift=\x] (27.5,87.5)--(27.3,88);

\foreach \x in {0,30cm,60cm}
\draw[<-,xshift=\x] (22.1,86.9)--(22.5,87.8);

\draw[] (0,5)--(0,58);
\draw[] (0,72)--(0,95);
\draw[] (0,5)--(58,5);
\draw[] (72,5)--(110,5);
\draw[] (0,95)--(58,95);
\draw[] (72,95)--(110,95);
\draw[] (110,5)--(110,58);
\draw[] (110,72)--(110,95);

\draw[] (0,20)--(38,58);
\draw[] (52,72)--(58,78);
\draw[] (72,88)--(88,72);
\draw[] (102,58)--(110,50);
\node[] (C) at (-4,20)
						{$1$};
\draw[] (110,50)--(72,12);
\draw[] (58,22)--(22,58);
\draw[] (8,72)--(0,80);
\draw[] (0,80)--(10,90);
\draw[] (10,90)--(28,72);
\draw[] (42,58)--(58,42);
\draw[] (72,28)--(80,20);
\draw[] (80,20)--(85,5);
\node[] (C) at (25,103)
						{$\frac{n-k+3}{2}$};
\draw[] (85,5)--(90,20);
\draw[] (90,20)--(105,35);
\draw[](105,35)--(82,58);
\draw[](58,88)--(55,95);
\draw[](55,95)--(50,80);
\draw[](28,58)--(5,35);
\draw[](50,80)--(42,72);
\draw[](5,35)--(20,20);
\draw[](20,20)--(25,5);
\draw[](25,5)--(30,20);
\draw[](30,20)--(58,48);
\draw[](82,72)--(100,90);
\draw[](100,90)--(110,80);

\draw[fill] (0,20) circle [radius=0.5];
\draw[fill] (0,50) circle [radius=0.5];
\draw[fill] (0,80) circle [radius=0.5];
\draw[fill]  (110,50)circle [radius=0.5];
\draw[fill]  (110,80)circle [radius=0.5];
\draw[fill] (85,95) circle [radius=0.5];
\draw[fill] (55,95) circle [radius=0.5];
\draw[fill] (110,20) circle [radius=0.5];
\draw[fill] (25,95) circle [radius=0.5];
\draw[fill] (55,5) circle [radius=0.5];
\draw[fill] (85,5) circle [radius=0.5];
\draw[fill] (25,5) circle [radius=0.5];

\node[] (C) at (25,-1)
						{$n$};
\draw[](88,58)--(72,42);
\draw[](110,80)--(102,72);
\draw[](58,28)--(40,10);
\draw[](40,10)--(0,50);
\node[] (C) at (85,103)
						{$\frac{n}{2}$};
\draw[](0,50)--(8,58);
\draw[](22,72)--(40,90);
\draw[](40,90)--(58,72);
\draw[](72,58)--(110,20);
\node[] (C) at (-4,50)
						{$2$};
\draw[](110,20)--(100,10);
\draw[](100,10)--(72,38);
\draw[](58,52)--(52,58);
\draw[](38,72)--(30,80);
\draw[](30,80)--(25,95);

\node[] (C) at (85,-3)
						{$\frac{2n-k+3}{2}$};
\draw[](25,95)--(20,80);
\draw[](20,80)--(12,72);
\draw[](12,58)--(50,20);
\draw[](50,20)--(55,5);
\draw[](55,5)--(58,12);
\draw[](72,32)--(98,58);
\draw[](98,72)--(90,80);
\draw[](90,80)--(85,95);
\draw[](85,95)--(80,80);
\draw[](58,58)--(10,10);
\draw[](80,80)--(72,72);
\draw[](10,10)--(0,20);

\node[] (C) at (65,25)
						{$\cdots$};
\node[] (C) at (65,85)
						{$\cdots$};
\node[] (C) at (25,62)
						{$\cdot$};
\node[] (C) at (25,64)
						{$\cdot$};
\node[] (C) at (25,66)
						{$\cdot$};
\node[] (C) at (85,62)
						{$\cdot$};
\node[] (C) at (85,64)
						{$\cdot$};
\node[] (C) at (85,66)
						{$\cdot$};
\node[] (C) at (50,-20)
						{$(2)~k~\mbox{odd},~n~\mbox{even}$};
\end{tikzpicture}}

\end{center}

\begin{center}
{\begin{tikzpicture}[scale=0.05]

\foreach \x in {0,30cm}
\foreach \y in {0,30cm,60cm}
\draw[->,xshift=\x,yshift=\y](4.9,24.9) -- (5,25);

\foreach \x in {0,30cm,60cm}
\foreach \y in {0}
\draw[->,xshift=\x,yshift=\y](19.9,39.9) -- (20,40);

\foreach \x in {0,30cm}
\foreach \y in {0}
\draw[->,xshift=\x,yshift=\y](11.5,31.5) -- (12.5,32.5);

\foreach \x in {0}
\foreach \y in {0,30cm}
\draw[->,xshift=\x,yshift=\y](27.4,47.4) -- (27.5,47.5);
\draw[->](87.4,47.4) -- (87.5,47.5);

\foreach \x in {0}
\foreach \y in {0}
\draw[->,xshift=\x,yshift=\y](27.4,12.2) -- (27.5,12.5);

\foreach \x in {0,30cm,60cm}
\foreach \y in {0,30cm,60cm}
\draw[->,xshift=\x,yshift=\y](14.9,25.1) -- (15,25);

\foreach \x in {0,30cm}
\foreach \y in {0}
\draw[->,xshift=\x,yshift=\y](7.4,32.6) -- (7.5,32.5);

\foreach \x in {0,60cm}
\foreach \y in {0}
\draw[->,xshift=\x,yshift=\y](29.9,40.1) -- (30,40);

\foreach \x in {0,30cm,60cm}
\foreach \y in {0,-30cm}
\draw[->,xshift=\x,yshift=\y](22.4,77.6) -- (22.5,77.5);

\foreach \x in {0,30cm}
\foreach \y in {0,30cm,60cm}
\draw[->,xshift=\x,yshift=\y](5.1,14.9) -- (5,15);

\foreach \x in {0,30cm,60cm}
\foreach \y in {0}
\draw[->,xshift=\x,yshift=\y](20.1,29.9) -- (20,30);

\foreach \x in {0,30cm}
\foreach \y in {0}
\draw[->,xshift=\x,yshift=\y](12.6,37.4) -- (12.5,37.5);

\foreach \x in {0}
\foreach \y in {0,30cm}
\draw[->,xshift=\x,yshift=\y](27.6,22.4) -- (27.5,22.5);
\draw[->](87.6,52.4) -- (87.5,52.5);
\foreach \x in {0,30cm,60cm}
\foreach \y in {0,30cm,60cm}
\draw[<-,xshift=\x,yshift=\y](15,15) -- (15.1,15.1);

\foreach \x in {0,30cm,60cm}
\foreach \y in {0,30cm}
\draw[<-,xshift=\x,yshift=\y](22.5,22.5) -- (22.6,22.6);

\foreach \x in {0,60cm}
\foreach \y in {0}
\draw[->,xshift=\x,yshift=\y](30.1,30.1) -- (30,30);

\foreach \x in {0,30cm}
\foreach \y in {0}
\draw[->,xshift=\x,yshift=\y](7.6,37.6) -- (7.5,37.5);

\foreach \x in {0,30cm}
\draw[<-,xshift=\x] (22.5,12.5)--(22.4,12.8);

\foreach \x in {0}
\draw[->,xshift=\x] (27.5,87.5)--(27.4,87.8);

\draw[->] (89.4,18.3)--(89.2,18.5);
\draw[->] (86.8,15.4)--(87,15.3);

\foreach \x in {0,30cm}
\draw[<-,xshift=\x] (22.1,86.9)--(22.5,87.8);

\draw[->] (89.8,82.2)--(90,82.5);
\draw[->] (87.05,84.6)--(86.9,84.5);

\draw[] (0,5)--(0,58)(0,72)--(0,95);
\draw[] (0,5)--(58,5)(72,5)--(95,5);
\draw[] (0,95)--(58,95)(72,95)--(95,95);
\draw[] (95,5)--(95,58)(95,72)--(95,95);

\draw[] (0,20)--(38,58)(52,72)--(58,78);
\draw[] (72,88)--(88,72);

\node[] (C) at (-4,20)
						{$1$};
\draw[] (88,58)--(72,42)(58,28)--(40,10);
\draw[] (40,10)--(0,50);
\draw[] (0,50)--(8,58)(22,72)--(40,90);
\draw[] (40,90)--(58,72)(72,58)--(95,35);

\node[] (C) at (-4,50)
						{$2$};
\draw[] (95,35)--(72,12);
\draw[] (58,22)--(22,58)(8,72)--(0,80);
\draw[] (0,80)--(10,90);
\draw[](10,90)--(28,72)(42,58)--(58,42)(72,28)--(80,20);
\draw[](80,20)--(95,10);

\node[] (C) at (25,103)
						{$\frac{n-k+3}{2}$};
\draw[](95,10)--(85,25);
\draw[](85,25)--(72,38)(58,52)--(52,58)(38,72)--(30,80);
\draw[](30,80)--(25,95);

\node[] (C) at (90,-2)
						{$\frac{2n-k}{2}$};
\draw[](25,95)--(20,80);
\draw[](20,80)--(12,72);
\draw[](12,58)--(50,20);
\draw[](50,20)--(55,5);
\draw[](55,5)--(58,14);
\draw[](72,32)--(90,50);
\draw[](90,50)--(82,58);
\draw[](58,86)--(55,95);
\draw[](55,95)--(50,80);
\draw[](50,80)--(42,72)(28,58)--(5,35);
\draw[](5,35)--(20,20);
\draw[](20,20)--(25,5);
\draw[](25,5)--(30,20);
\draw[](30,20)--(58,48)(82,72)--(85,75);
\draw[](85,75)--(95,90);

\node[] (C) at (25,-1)
						{$n$};
\draw[](95,90)--(80,80);
\draw[](58,58)--(10,10)(80,80)--(72,72);
\draw[](10,10)--(0,20);

\draw[fill] (95,35) circle [radius=0.5];
\draw[fill] (25,95) circle [radius=0.5];
\draw[fill] (55,95) circle [radius=0.5];
\draw[fill] (95,90) circle [radius=0.5];
\draw[fill] (55,5) circle [radius=0.5];
\draw[fill] (25,5) circle [radius=0.5];
\draw[fill] (95,10) circle [radius=0.5];
\draw[fill] (0,80) circle [radius=0.5];
\draw[fill] (0,50) circle [radius=0.5];
\draw[fill] (0,20) circle [radius=0.5];

\node[] (C) at (90,102)
						{$\frac{n+1}{2}$};

\node[] (C) at (65,25)
						{$\cdots$};
\node[] (C) at (65,85)
						{$\cdots$};
\node[] (C) at (25,62)
						{$\cdot$};
\node[] (C) at (25,64)
						{$\cdot$};
\node[] (C) at (25,66)
						{$\cdot$};
\node[] (C) at (85,62)
						{$\cdot$};
\node[] (C) at (85,64)
						{$\cdot$};
\node[] (C) at (85,66)
						{$\cdot$};

\node[] (C) at (50,-20)
						{$(3)~k~\mbox{even},~n~\mbox{odd}$};
\end{tikzpicture}}
\qquad\qquad
{\begin{tikzpicture}[scale=0.045]

\foreach \x in {0,30cm}
\foreach \y in {0,30cm,60cm}
\draw[->,xshift=\x,yshift=\y](4.9,24.9) -- (5,25);

\foreach \x in {0,30cm,60cm}
\foreach \y in {0,60cm}
\draw[->,xshift=\x,yshift=\y](19.9,39.9) -- (20,40);

\foreach \x in {0,30cm}
\foreach \y in {0,60cm}
\draw[->,xshift=\x,yshift=\y](11.5,31.5) -- (12.5,32.5);

\foreach \x in {0,60cm}
\foreach \y in {0,30cm}
\draw[->,xshift=\x,yshift=\y](27.4,47.4) -- (27.5,47.5);

\foreach \x in {0}
\foreach \y in {0}
\draw[->,xshift=\x,yshift=\y](27.4,12.2) -- (27.5,12.5);

\foreach \x in {0,30cm,60cm}
\foreach \y in {0,30cm,60cm}
\draw[->,xshift=\x,yshift=\y](14.9,25.1) -- (15,25);

\foreach \x in {0,30cm}
\foreach \y in {0}
\draw[->,xshift=\x,yshift=\y](7.4,32.6) -- (7.5,32.5);

\foreach \x in {0,60cm}
\foreach \y in {0,60cm}
\draw[->,xshift=\x,yshift=\y](29.9,40.1) -- (30,40);

\foreach \x in {0,30cm,60cm}
\foreach \y in {0,-30cm}
\draw[->,xshift=\x,yshift=\y](22.4,77.6) -- (22.5,77.5);

\foreach \x in {0}
\foreach \y in {0}
\draw[->,xshift=\x,yshift=\y](37.4,92.6) -- (37.5,92.5);
\foreach \x in {0,30cm}
\foreach \y in {0,30cm,60cm}
\draw[->,xshift=\x,yshift=\y](5.1,14.9) -- (5,15);

\foreach \x in {0,30cm,60cm}
\foreach \y in {0,60cm}
\draw[->,xshift=\x,yshift=\y](20.1,29.9) -- (20,30);

\foreach \x in {0,30cm}
\foreach \y in {0}
\draw[->,xshift=\x,yshift=\y](12.6,37.4) -- (12.5,37.5);

\foreach \x in {0}
\foreach \y in {0,30cm,60cm}
\draw[->,xshift=\x,yshift=\y](27.6,22.4) -- (27.5,22.5);

\foreach \x in {60cm}
\foreach \y in {30cm,60cm}
\draw[->,xshift=\x,yshift=\y](27.6,22.4) -- (27.5,22.5);
\foreach \x in {0,30cm,60cm}
\foreach \y in {0,30cm,60cm}
\draw[<-,xshift=\x,yshift=\y](15,15) -- (15.1,15.1);

\foreach \x in {0,30cm,60cm}
\foreach \y in {0,30cm,60cm}
\draw[<-,xshift=\x,yshift=\y](22.5,22.5) -- (22.6,22.6);

\foreach \x in {0,60cm}
\foreach \y in {0,60cm}
\draw[->,xshift=\x,yshift=\y](30.1,30.1) -- (30,30);

\foreach \x in {0,30cm}
\foreach \y in {0}
\draw[->,xshift=\x,yshift=\y](7.6,37.6) -- (7.5,37.5);
\draw[<-] (37,101)--(37.1,101.3);

\foreach \x in {0,30cm}
\draw[<-,xshift=\x] (22.5,12.5)--(22.4,12.8);

\draw[<-] (42.5,102.5)--(42.6,102.2);

\draw[->] (89.4,18.3)--(89.2,18.5);
\draw[->] (86.8,15.4)--(87,15.3);

\draw[->] (7.7,99.9)--(7.5,100);
\draw[->] (5.25,97.2)--(5.4,97);

\draw[] (0,5)--(0,58)(0,72)--(0,110);
\draw[] (0,5)--(58,5)(72,5)--(95,5);
\draw[] (0,110)--(58,110)(72,110)--(95,110);
\draw[] (95,5)--(95,58)(95,72)--(95,110);

\draw[] (0,20)--(38,58)(52,72)--(58,78)(72,92)--(85,105);
\draw[] (85,105)--(95,95);

\node[] (C) at (-4,20)
						{$1$};
\draw[] (95,95)--(72,72)(58,58)--(10,10);
\draw[] (10,10)--(0,20);

\node[] (C) at (90,117)
						{$\frac{n+2}{2}$};
\draw[] (0,50)--(8,58)(22,72)--(55,105);
\draw[] (55,105)--(58,102)(72,88)--(88,72);

\node[] (C) at (-4,50)
						{$2$};
\draw[] (88,58)--(72,42)(58,28)--(40,10);
\draw[] (40,10)--(0,50);
\draw[] (0,80)--(25,105);
\draw[](25,105)--(58,72)(72,58)--(95,35);
\draw[](95,35)--(72,12);
\draw[](58,22)--(22,58)(8,72)--(0,80);

\node[] (C) at (90,-2)
						{$\frac{2n-k}{2}$};
\draw[](0,105)--(15,95);
\draw[](15,95)--(38,72)(52,58)--(58,52)(72,38)--(85,25);
\draw[](85,25)--(95,10);

\node[] (C) at (5,117)
						{$\frac{n-k+2}{2}$};
\draw[](95,10)--(80,20);
\draw[](80,20)--(72,28)(58,42)--(42,58)(28,72)--(10,90);
\draw[](10,90)--(0,105);
\draw[](40,110)--(45,95);
\draw[](45,95)--(58,82)(82,58)--(90,50);
\draw[](90,50)--(72,32);
\draw[](58,13)--(55,5);
\draw[](55,5)--(50,20);
\draw[](50,20)--(12,58);
\draw[](12,72)--(35,95);
\draw[](35,95)--(40,110);
\draw[](75,95)--(90,80);
\draw[](90,80)--(82,72)(58,48)--(30,20);
\draw[](30,20)--(25,5);
\draw[](72,104)--(75,95);
\draw[](25,5)--(20,20);
\draw[](20,20)--(5,35);
\draw[](5,35)--(28,58)(42,72)--(58,88);

\draw[fill] (0,20) circle [radius=0.5];
\draw[fill] (95,95) circle [radius=0.5];
\draw[fill] (0,50) circle [radius=0.5];
\draw[fill] (95,10) circle [radius=0.5];
\draw[fill] (95,35) circle [radius=0.5];
\draw[fill] (40,110) circle [radius=0.5];
\draw[fill] (0,105) circle [radius=0.5];
\draw[fill] (55,5) circle [radius=0.5];
\draw[fill] (25,5) circle [radius=0.5];

\node[] (C) at (25,-1)
						{$n$};

\node[] (C) at (65,25)
						{$\cdots$};
\node[] (C) at (65,85)
						{$\cdots$};
\node[] (C) at (25,62)
						{$\cdot$};
\node[] (C) at (25,64)
						{$\cdot$};
\node[] (C) at (25,66)
						{$\cdot$};
\node[] (C) at (85,62)
						{$\cdot$};
\node[] (C) at (85,64)
						{$\cdot$};
\node[] (C) at (85,66)
						{$\cdot$};

\node[] (C) at (50,-20)
						{$(4)~k~\mbox{even},~n~\mbox{even}$};
\end{tikzpicture}}
\end{center}
\begin{center}
\caption{Initial Postnikov diagram}\label{Figure:initial Postnikov diagram}
\end{center}
\end{figure}


\begin{figure}
\begin{center}
{\begin{tikzpicture}[scale=0.18]

\foreach \x in {0,10cm,25cm}
\foreach \y in {0,10cm,25cm}
\draw[<-,thick,xshift=\x,yshift=\y](0.5,0) -- (4.5,0);
\foreach \x in {5}
\foreach \y in {0,15cm}
\draw[<-,thick,xshift=\x,yshift=\y](5.5,5) -- (9.5,5);

\foreach \x in {5}
\foreach \y in {0,10cm,25cm}
\draw[->,thick,xshift=\x,yshift=\y](5.5,0) -- (9.5,0);
\foreach \x in {0,10cm,25cm}
\foreach \y in {0,15cm}
\draw[->,thick,xshift=\x,yshift=\y](0.5,5) -- (4.5,5);

\foreach \x in {0,10cm,25cm}
\foreach \y in {5}
\draw[->,thick,xshift=\x,yshift=\y](0,0.5) -- (0,4.5);
\foreach \x in {0,10cm,25cm}
\foreach \y in {0,15cm}
\draw[->,thick,xshift=\x,yshift=\y](5,5.5) -- (5,9.5);

\foreach \x in {0,10cm,25cm}
\foreach \y in {5}
\draw[<-,thick,xshift=\x,yshift=\y](5,0.5) -- (5,4.5);
\foreach \x in {0,10cm,25cm}
\foreach \y in {0,15cm}
\draw[<-,thick,xshift=\x,yshift=\y](0,5.5) -- (0,9.5);
\node[] (C) at (20,22.5)
						{$\cdots$};
\node[] (C) at (20,5)
						{$\cdots$};
\node[] (C) at (7.5,16)
						{$\cdot$};
\node[] (C) at (7.5,15)
						{$\cdot$};
\node[] (C) at (7.5,14)
						{$\cdot$};
\node[] (C) at (27.5,16)
						{$\cdot$};
\node[] (C) at (27.5,15)
						{$\cdot$};
\node[] (C) at (27.5,14)
						{$\cdot$};
\node[] (C) at (35,16)
						{$\cdot$};
\node[] (C) at (35,15)
						{$\cdot$};
\node[] (C) at (35,14)
						{$\cdot$};
\node[] (C) at (36,2.5)
						{R1};
\node[] (C) at (36,7.5)
						{R2};
\node[] (C) at (38,22.5)
						{R(n-k-2)};
\node[] (C) at (0,-1.5)
						{{\qihao $(1,1)$}};
\node[] (C) at (5,-1.5)
						{{\qihao $(2,1)$}};
\node[] (C) at (-3,5)
						{{\qihao $(1,2)$}};
\node[] (C) at (34,27)
						{{\qihao (k-1,n-k-1)}};
\node[] (C) at (0,0)
						{{\qihao$\bullet$}};
\node[] (C) at (0,5)
						{{\qihao$\bullet$}};
\node[] (C) at (0,10)
						{{\qihao$\bullet$}};
\node[] (C) at (0,20)
						{{\qihao$\bullet$}};
\node[] (C) at (0,25)
						{{\qihao$\bullet$}};

\node[] (C) at (5,0)
						{{\qihao$\bullet$}};
\node[] (C) at (5,5)
						{{\qihao$\bullet$}};
\node[] (C) at (5,10)
						{{\qihao$\bullet$}};
\node[] (C) at (5,20)
						{{\qihao$\bullet$}};
\node[] (C) at (5,25)
						{{\qihao$\bullet$}};

\node[] (C) at (10,0)
						{{\qihao$\bullet$}};
\node[] (C) at (10,5)
						{{\qihao$\bullet$}};
\node[] (C) at (10,10)
						{{\qihao$\bullet$}};
\node[] (C) at (10,20)
						{{\qihao$\bullet$}};
\node[] (C) at (10,25)
						{{\qihao$\bullet$}};

\node[] (C) at (15,0)
						{{\qihao$\bullet$}};
\node[] (C) at (15,5)
						{{\qihao$\bullet$}};
\node[] (C) at (15,10)
						{{\qihao$\bullet$}};
\node[] (C) at (15,20)
						{{\qihao$\bullet$}};
\node[] (C) at (15,25)
						{{\qihao$\bullet$}};

\node[] (C) at (25,0)
						{{\qihao$\bullet$}};
\node[] (C) at (25,5)
						{{\qihao$\bullet$}};
\node[] (C) at (25,10)
						{{\qihao$\bullet$}};
\node[] (C) at (25,20)
						{{\qihao$\bullet$}};
\node[] (C) at (25,25)
						{{\qihao$\bullet$}};

\node[] (C) at (30,0)
						{{\qihao$\bullet$}};
\node[] (C) at (30,5)
						{{\qihao$\bullet$}};
\node[] (C) at (30,10)
						{{\qihao$\bullet$}};
\node[] (C) at (30,20)
						{{\qihao$\bullet$}};
\node[] (C) at (30,25)
						{{\qihao$\bullet$}};

\node[] (C) at (2.5,-4)
						{{C1}};
\node[] (C) at (7.5,-4)
						{{C2}};
\node[] (C) at (12.5,-4)
						{{C3}};
\node[] (C) at (27.5,-4)
						{{C(k-2)}};
\end{tikzpicture}}
\end{center}
\begin{center}
\caption{Initial quiver $Q_{\textrm{ini}}$ ($k$ odd, $n$ odd)}\label{Figure:initial quiver1}
\end{center}
\end{figure}
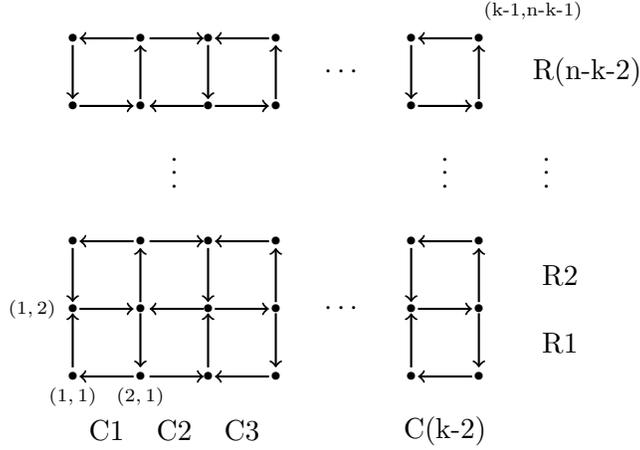

To prove the rigidity and Jacobi-finiteness property of $(\overline{Q},F,\overline{W})$, we need the following lemma.
\begin{lemma}\label{Lemma:first key lemma}
Let $l$ be a cycle in $Q_{\textrm{ini}}$ with end point $(i_0,j_0)$. There exists a positive integer $m$ such that $l-\omega^m\in J(Q_{\textrm{ini}},W_{\textrm{ini}})$ for any fundamental cycle $\omega$ with end point $(i_0,j_0)$.
\end{lemma}
\begin{proof}
Without loss of generality, we may assume that $(i_0,j_0)$ is at the left top corner of $\omega$. So $\omega$ is located at $R(j_0-1)$ and $Ci_0$ of $Q_{\textrm{ini}}$. The proof is proceeded in two steps.

\medskip

{\bf{Step 1:}} We claim that there exists a cycle $\xi$ satisfying the following conditions:
\begin{itemize}
\item[(1)] the end point of $\xi$ is $(i_0,j_0)$;
\item[(2)] $l-\xi\in J(Q_{\textrm{ini}},W_{\textrm{ini}})$;
\item[(3)] any highest vertex of $\xi$ is located at the top of the $j_0$-th level of $Q_{\textrm{ini}}$.
\end{itemize}

Let $a=a(i,j)$ be a highest vertex of $l$. If $j=2$, then $l$ itself already satisfies the conditions of $\xi$. So we assume $j \geq 2$. Then up to the left-right symmetries, we may assume the local configuration of $l$ is as in Figure \ref{Figure:first key lemma}, where the bold arrows form a subpath of a cycle which cyclically equivalent to $l$. Now we construct a new cycle $l'$ from $l$ with end point $(i_0,j_0)$ such that $l-l'\in J(Q_{\textrm{ini}},W_{\textrm{ini}})$.

Note that we may assume that none of the end points of $\gamma$ is $(i_0,j_0)$. Otherwise, $a$ is already located at the top of the $j_0$-th level of $Q_{\textrm{ini}}$, so it is unnecessary to consider such $a$. Therefore $\delta\gamma\beta$ is a subpath of $l$, and we may write $l= q \delta\gamma\beta p$ with $p$ and $q$ the subpaths of $l$, where $p$ and $q$ maybe trivial paths. Let $l''= q\nu\mu\rho p$. Then the end point of $l''$ is still the $(i_0,j_0)$, and $l-l''= q(\delta\gamma\beta-\nu\mu\rho)p=p(\partial_\alpha W_{\textrm{ini}})q\in J(Q_{\textrm{ini}},W_{\textrm{ini}}).$
If $a$ is still a vertex on $l''$, we repeat above construction untill $a$ is never a vertex on a cycle $l'$, which makes sense since the length of $l$ is finite. The final cycle $l'$ is what we want.
Note that the cycle $l'$ has the following properties,
\begin{itemize}
\item[(1)] $l-l'\in J(Q_{\textrm{ini}},W_{\textrm{ini}})$;
\item[(2)] $a$ is never a highest vertex of $l'$;
\item[(3)] no new highest vertex arises in $l'$ with respect to $l$.
\end{itemize}
Thus by  inductively constructing the cycle $l'$, we may find a cycle $\xi$ satisfies the conditions in the claim.

\medskip

{\bf{Step 2:}} For the cycle $\xi$ produced in Step I, we consider the lowest, the leftmost and the rightmost vertices, similar to the analysis used in Step I, we obtain a cycle $\zeta$ such that
\begin{itemize}
\item[(1)] the end point of $\zeta$ is $(i_0,j_0)$;
\item[(2)] $l-\zeta\in J(Q_{\textrm{ini}},W_{\textrm{ini}})$;
\item[(3)] $\zeta$ lies at $R(j_0-1)$ and $Ci_0$, with $width(\zeta)=height(\zeta)=1$.
\end{itemize}

By the item $(3)$, $\zeta$ is a power of a fundamental cycle $\omega'$ lies at $R(j_0-1)$ and $Ci_0$. Since by our assumption of $\omega$, $\omega'$ is a fundamental cycle starting at $(i_0,j_0)$ and lies at $R(j_0-1)$ and $Ci_0$. So we have $\omega'=\omega$ by the item $(1)$. Therefore we have proved that there exists a positive integer $m$ such that $l-\omega^m\in J(Q_{\textrm{ini}},W_{\textrm{ini}})$.

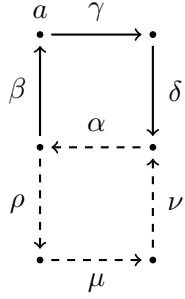
\begin{figure}
\begin{tikzpicture}[scale=0.15]
\draw[fill] (10,5) circle [radius=0.25];
\draw[fill] (20,5) circle [radius=0.25];
\draw[fill] (10,15) circle [radius=0.25];
\draw[fill] (20,15) circle [radius=0.25];
\draw[fill] (10,25) circle [radius=0.25];
\draw[fill] (20,25) circle [radius=0.25];

\draw[->,thick] (10,16) -- (10,24);
\draw[->,thick] (11,25) -- (19,25);
\draw[->,thick] (20,24) -- (20,16);

\draw[->,dashed,thick] (10,14) -- (10,6);
\draw[->,dashed,thick] (11,5) -- (19,5);
\draw[<-,dashed,thick] (20,14) -- (20,6);
\draw[<-,dashed,thick] (11,15) -- (19,15);

\draw (10,27) node {$a$};
\draw (15,27) node {$\gamma$};
\draw (15,17) node {$\alpha$};
\draw (15,3) node {$\mu$};
\draw (8,10) node {$\rho$};
\draw (22,10) node {$\nu$};
\draw (22,20) node {$\delta$};
\draw (8,20) node {$\beta$};
\end{tikzpicture}
\begin{center}
\caption{Local configuration neighbouring the highest vertex $a$ of a cycle}\label{Figure:first key lemma}
\end{center}
\end{figure}

\end{proof}

\begin{Theorem}\label{Theorem£ºrigid}
Any QP $(Q,W)$ of a Grassmannian cluster algebra is rigid.
\end{Theorem}
Because the QP-mutations preserve rigidity,
it suffices to prove the theorem for the initial QP $(Q_{\textrm{ini}},W_{\textrm{ini}})$. So we have to show that any cycle in $Q_{\textrm{ini}}$ is cyclically equivalent to a cycle in the Jacobian idea $J(Q_{\textrm{ini}},W_{\textrm{ini}})$. This is easy for the case $k=2$ or $k=n-2$. Now we assume $k \neq 2$ and $k \neq n-2$. The following lemma is useful.

\begin{lemma}\label{Lemma:second key lemma}
Let $\omega_1$ and $\omega_2$ be two fundamental cycles of $Q_{\textrm{ini}}$ sharing a common arrow $\alpha$. For any positive integer $m$, if $\omega^m_1$ is cyclically equivalent to an element in $J(Q_{\textrm{ini}},W_{\textrm{ini}})$, then $\omega^m_2$ is also cyclically equivalent to an element in $J(Q_{\textrm{ini}},W_{\textrm{ini}})$.
\end{lemma}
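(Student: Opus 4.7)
The plan is to show that $\omega_1^m$ and $\omega_2^m$ coincide, up to a sign, modulo the Jacobian ideal $J := J(\overline{Q}_{ini},F,\overline{W}_{ini})$; the desired conclusion then follows at once from the definition of cyclic equivalence. The key ingredient is the cyclic derivative $\partial_\alpha \overline{W}_{ini}$. First I would verify that $\alpha \in \overline{Q}'_1 \sqcup \overline{Q}''_1$, so that $\partial_\alpha \overline{W}_{ini}$ genuinely lies in $J$. This is because an external arrow has the exterior of the disk on one of its sides, hence lies on the boundary of at most one oriented region of the initial Postnikov diagram, contradicting the assumption that both $\omega_1 \neq \omega_2$ contain $\alpha$.

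Next, choose cyclic representatives of the two fundamental cycles beginning with $\alpha$ and write $\omega_i = \alpha p_i$ for $i=1,2$. By Definition \ref{Potentials for quivers of Postnikov diagrams}, $\overline{W}_{ini}$ is the signed sum of fundamental cycles, with sign determined by the C.W. or A.C.W. orientation of the underlying oriented region; since $\alpha$ occurs exactly once in each of $\omega_1$ and $\omega_2$ and in no other fundamental cycle, one obtains
\[
\partial_\alpha \overline{W}_{ini} \;=\; \varepsilon_1 p_1 + \varepsilon_2 p_2
\]
for certain $\varepsilon_1,\varepsilon_2\in\{\pm 1\}$. Setting $\varepsilon := -\varepsilon_1\varepsilon_2 \in \{\pm 1\}$, this yields $p_1 - \varepsilon p_2 \in J$.

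Using that $J$ is a two-sided closed ideal, the telescoping identity
\[
\omega_1^m - \varepsilon^m \omega_2^m \;=\; \sum_{k=0}^{m-1} \varepsilon^{m-1-k}\,(\alpha p_1)^k\,\alpha\,(p_1 - \varepsilon p_2)\,(\alpha p_2)^{m-1-k}
\]
places $\omega_1^m - \varepsilon^m \omega_2^m$ in $J$. If $\omega_1^m$ is cyclically equivalent to some $x \in J$, writing $\omega_2^m = \varepsilon^{-m}\bigl(\omega_1^m - (\omega_1^m - \varepsilon^m \omega_2^m)\bigr)$ then exhibits $\omega_2^m$ as cyclically equivalent to $\varepsilon^{-m}\bigl(x - (\omega_1^m - \varepsilon^m \omega_2^m)\bigr) \in J$, which completes the proof. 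The main subtlety is correctly tracking the sign $\varepsilon$ via the alternating-orientation convention in $\overline{W}_{ini}$ and ruling out the external case for $\alpha$; once these are in place the telescoping itself is routine.
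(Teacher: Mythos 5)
Your proof is correct and follows essentially the same route as the paper's: both use the cyclic derivative $\partial_\alpha\overline{W}_{ini}$ at the shared arrow to show the two fundamental cycles agree (up to sign) modulo $J$, then factor/telescope $\omega_1^m-\varepsilon^m\omega_2^m$ through $p_1-\varepsilon p_2$, and finish with the cyclic-equivalence bookkeeping. Your explicit justification that $\alpha$ cannot be external and your careful tracking of the sign $\varepsilon$ are points the paper treats more tersely, but the argument is the same.
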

\begin{proof}
Recall that $C$ is the closure of the span of all elements of the form
$$\alpha_s\cdots\alpha_2\alpha_1-\alpha_1\alpha_s\cdots\alpha_2,$$ where $\alpha_s\cdots\alpha_2\alpha_1$ is a cycle. Since $\omega^m_1$ is cyclically equivalent to an element in the ideal $J(Q_{\textrm{ini}},W_{\textrm{ini}})$, there is a potential $\omega\in J(Q_{\textrm{ini}},W_{\textrm{ini}})$ such that $\omega^m_2-\omega\in C$. Assume that $\alpha p_1$ (resp. $\alpha p_2$) is the fundamental cycle which is cyclically equivalent to $\omega_1$ (resp. $\omega_2$), where $p_1$ and $p_2$ are paths with head $t(\alpha)$ and tail $h(\alpha)$.
Then we use the partial derivation $\partial_\alpha$ to obtain that
$$\alpha p_1-\alpha p_2\in J(Q_{\textrm{ini}},W_{\textrm{ini}}).$$
Moreover, since $\alpha p_1-\alpha p_2$ is a factor of $(\alpha p_1)^m-(\alpha p_2)^m$,
$$(\alpha p_1)^m-(\alpha p_2)^m\in J(Q_{\textrm{ini}},W_{\textrm{ini}}).$$
Note that $(\alpha p_2)^m-\omega^m_2\in C$ and $\omega^m_2-\omega\in C$, thus $(\alpha p_2)^m-\omega\in C$. Therefore
$$\omega^m_1-[(\alpha p_1)^m-(\alpha p_2)^m+\omega]=[\omega^m_1-(\alpha p_1)^m]+[(\alpha p_2)^m-\omega]\in C,$$
where $(\alpha p_1)^m-(\alpha p_2)^m+\omega\in J(Q_{\textrm{ini}},W_{\textrm{ini}})$. This completes the proof.
\end{proof}
\emph{Proof of the theorem:} we separate the proof into three steps.

\medskip

{\bf{Step 1:}} See the Figure \ref{Figure:initial quiver1}, note that there exists an arrow $\alpha$ and a fundamental cycle $\alpha p$ such that the only fundamental cycles which contain $\alpha$ are those in the cyclically equivalent set $[\alpha p]$. Actually, one may always choose the arrow $\alpha$ from $a(2,1)$ to $a(1,1)$ and the left bottom fundamental cycle of the quiver. So
$$(\alpha p)^m=(\alpha \partial_\alpha {W})^m\in J(Q_{\textrm{ini}},W_{\textrm{ini}})$$
for any positive integer $m$. That means for any fundamental cycle $\omega_1$ in $[\alpha p]$ and any positive integer $m$, $\omega_1^m$ is cyclically equivalent to an element in $J(Q_{\textrm{ini}},W_{\textrm{ini}})$.

\medskip

{\bf{Step 2:}} For any fundamental cycle $\omega_2$ and any positive integer $m$, by recursively using Lemma \ref{Lemma:second key lemma}, we find a fundamental cycle $\omega_1$ appearing in Step I, such that $\omega^m_2$ is cyclically equivalent $\omega^m_1$. Thus $\omega^m_2$ is cyclically equivalent to an element in $J(Q_{\textrm{ini}},W_{\textrm{ini}})$.

\medskip

{\bf{Step 3:}} For any cycle $l$ in $(\overline{Q}_{\textrm{ini}},F)$, by Lemma \ref{Lemma:first key lemma}, there is a power $\omega^m_2$ of fundamental cycle with $l-\omega^{m}_2\in J(Q_{\textrm{ini}},W_{\textrm{ini}})$. By Step II, there is an element $\omega^m_1$ in $J(Q_{\textrm{ini}},W_{\textrm{ini}})$ such that $\omega^m_2-\omega^m_1\in C$. That is
$$l-[l-\omega^m_2+\omega^m_1]=\omega^m_2-\omega^m_1\in C,$$
 where $l-\omega^m_2+\omega^m_1\in J(Q_{\textrm{ini}},W_{\textrm{ini}})$. This means that $l$ is cyclically equivalent to an element in $J(Q_{\textrm{ini}},W_{\textrm{ini}})$. Thus the QP $(Q_{\textrm{ini}},W_{\textrm{ini}})$ is rigid.

\begin{remark}\label{rem:nonrigidity}
As the rigidity for the QP, one may also consider the rigidity for an IQP, see \cite{P18}. Note that $(\overline{Q}_{\textrm{ini}},F,\overline{W}_{\textrm{ini}})$ is not rigid. In particular, any fundamental cycle in $(\overline{Q}_{\textrm{ini}},F,\overline{W}_{\textrm{ini}})$ is not cyclically equivalent to a cycle in $J(\overline{Q}_{\textrm{ini}},F,\overline{W}_{\textrm{ini}})$.
\end{remark}

\begin{Theorem}
For each QP $(Q,W)$ of the Grassmannian cluster algebra, the Jacobian algebra $\mathcal{P}(Q,W)$ is finite dimensional.
\end{Theorem}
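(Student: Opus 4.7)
By the invariance of Jacobi-finiteness under IQP-mutation (which extends to the iced setting from the classical result of Derksen-Weyman-Zelevinsky \cite{DWZ08} via the framework of \cite{P18}), together with Theorem \ref{compatible}, it suffices to prove Jacobi-finiteness of the initial IQP $(\overline{Q}_{ini},F,\overline{W}_{ini})$. The strategy is to exhibit an integer $N$ such that every path of length $\geq N$ in $\overline{Q}_{ini}$ lies in the Jacobian ideal $J:=J(\overline{Q}_{ini},F,\overline{W}_{ini})$; since $\overline{Q}_{ini}$ has only finitely many paths of bounded length, this forces $\dim_{\C}\mathcal{P}(\overline{Q}_{ini},F,\overline{W}_{ini})<\infty$.

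The central step is to show that there exists $m_0\in\N$ such that $\omega^{m_0}\in J$ for every fundamental cycle $\omega$ of $\overline{Q}_{ini}$. I would begin from the boundary fundamental cycle $\omega_0=\alpha p$ appearing in Step 1 of the proof of Theorem \ref{Theorem��rigid}: since $\alpha$ lies in exactly one fundamental cycle of $\overline{W}_{ini}$, one has $\partial_\alpha \overline{W}_{ini}=\pm p$, hence $p\in J$ and therefore $\omega_0=\alpha p\in J$ as a genuine equality in $\mathcal{P}$, not merely up to cyclic equivalence. To propagate, observe that if two fundamental cycles $\omega', \omega''$ share an internal or boundary arrow $\gamma$, then writing $\omega'=\gamma p'$ and $\omega''=\gamma p''$ at the common basepoint $s(\gamma)$, the cyclic derivative gives $p'-p''\in J$, so $\omega'-\omega''=\gamma(p'-p'')\in J$. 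Iterating along a chain of adjacent fundamental cycles connecting any prescribed $\omega$ to $\omega_0$ shows that $\omega$ itself lies in $J$ at the appropriate basepoint; transporting to the remaining basepoints on $\omega$ by left and right multiplication in the path algebra then yields that a uniformly bounded power of $\omega$ lies in $J$ regardless of basepoint.

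Combined with Lemma \ref{Lemma:first key lemma}, this implies that every cycle in $\overline{Q}_{ini}$ of essential length $\geq m_0$ lies in $J$, and by the estimate $length(l)\leq 4\cdot elength(l)$ of Remark \ref{Remark:essential length}, every cycle of length at least $4m_0$ lies in $J$. A pigeonhole then shows that for $N:=4m_0\cdot|\overline{Q}_0|$, any path of length $\geq N$ in $\overline{Q}_{ini}$ revisits some vertex and hence factors through a subcycle of length exceeding $4m_0$, putting the whole path in $J$. The main obstacle I anticipate is the base-point bookkeeping in the propagation step: cyclic equivalence does not descend to equality in $\mathcal{P}$, so the chain of adjacent fundamental cycles must be organized so that each shared arrow $\gamma$ serves as the initial arrow of both of its neighbouring cycles, and the resulting vanishing must be transported between the various basepoints of a given $\omega$ using only honest multiplication in the Jacobian algebra, rather than cyclic rotation.
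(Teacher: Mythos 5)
Your proposal follows essentially the same route as the paper's proof: reduce to the initial IQP by mutation-invariance, show that a uniform power of every fundamental cycle lies in $J(\overline{Q}_{ini},F,\overline{W}_{ini})$ by propagating from a seed boundary triangle through shared arrows via cyclic derivatives, and then handle arbitrary cycles (and hence all long paths) using Lemma \ref{Lemma:first key lemma} together with the length estimate of Remark \ref{Remark:essential length}. The one imprecision is your intermediate claim that the chain yields $\omega$ itself (first power) in the Jacobian ideal: consecutive links of the chain necessarily use different shared arrows, so each link forces a change of basepoint, and since membership in the ideal is not preserved under cyclic rotation each link costs one power -- but this is precisely the bookkeeping issue you flag yourself, and your final conclusion (a uniformly bounded power of every fundamental cycle lies in the ideal) is what the argument actually delivers and is all that is needed.
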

\begin{proof}
Since the Jacobi-finiteness of an QP is invariant under QP-mutations, we only prove this for the initial QP $(Q_{\textrm{ini}},W_{\textrm{ini}})$. We have to prove that if the length of a cycle $l$ is large enough, then the cycle belongs to the Jacobian ideal.

By the Lemma \ref{Lemma:first key lemma}, there exist a fundamental cycle $\omega$ and a positive integer $m$ such that $l-\omega^m\in J(Q_{\textrm{ini}},W_{\textrm{ini}})$. On the other hand, note that $length(l)=4 length(\omega)$. So we only need to show that for any fundamental cycle $\omega$, there is a positive integer $n$ such that
$$\omega^n\in J(Q_{\textrm{ini}},W_{\textrm{ini}}).$$
This can be done by iteratively using the relations in $J(Q_{\textrm{ini}},W_{\textrm{ini}})$. For example, we consider $\omega^n$ with $\omega$ shown in Figure \ref{Figure:Jacobi finite}, where the end points of fundamental cycles $\omega$, $\omega_1$ and $\omega_2$ are $a$, $a$, and $b$ respectively.
Then we have
$$\omega^n-\omega_1^n \in J(Q_{\textrm{ini}},W_{\textrm{ini}}),$$
$$\omega_1^n-\delta\omega_2^{n-1}\gamma\beta\alpha \in J(Q_{\textrm{ini}},W_{\textrm{ini}}),$$
and thus
$$\omega^n-\delta\omega_2^{n-1}\gamma\beta\alpha \in J(Q_{\textrm{ini}},W_{\textrm{ini}}).$$
As long as $n$ is large enough, repeating this process, we can find a fundamental cycle $\omega'$ locating at the row $R1$, which belongs to $J(Q_{\textrm{ini}},W_{\textrm{ini}})$, such that
$$\omega^n-q(\omega')^{n'}p \in J(Q_{\textrm{ini}},W_{\textrm{ini}}),$$
where $n'$ is a positive integer, $p$ and $q$ are paths in $Q_{\textrm{ini}}$. Therefore $\omega^n \in J(Q_{\textrm{ini}},W_{\textrm{ini}})$ which completes the proof.

\begin{figure}
\begin{center}
{\begin{tikzpicture}[scale=0.2]
\draw[fill] (0,0) circle [radius=0.2];
\draw[fill] (0,5) circle [radius=0.2];
\draw[fill] (5,5) circle [radius=0.2];
\draw[fill] (5,0) circle [radius=0.2];

\draw[fill] (0,-5) circle [radius=0.2];
\draw[fill] (0,-10) circle [radius=0.2];
\draw[fill] (5,-10) circle [radius=0.2];
\draw[fill] (5,-5) circle [radius=0.2];

\draw[->,thick] (.5,0) -- (4.5,0);
\draw[->,thick] (5,0.5) -- (5,4.5);
\draw[->,thick] (4.5,5) -- (0.5,5);
\draw[->,thick] (0,4.5) -- (0,0.5);

\draw[->,thick] (.5,-10) -- (4.5,-10);
\draw[->,thick] (5,-9.5) -- (5,-5.5);
\draw[->,thick] (4.5,-5) -- (0.5,-5);
\draw[->,thick] (0,-5.5) -- (0,-9.5);

\draw[->,thick] (0,-4.5) -- (0,-0.5);
\draw[<-,thick] (5,-4.5) -- (5,-0.5);

\draw (2.5,-11) node {$\cdot$};
\draw (2.5,-12) node {$\cdot$};
\draw (2.5,-13) node {$\cdot$};

\draw (2.5,2.5) node {$\omega$};
\draw (2.5,-2.5) node {$\omega_1$};
\draw (2.5,-7.5) node {$\omega_2$};

\draw (-2,0) node {$a$};
\draw (-2,-5) node {$b$};
\draw (2.5,0.8) node {$\alpha$};
\draw (-0.8,-2.5) node {$\delta$};
\draw (2.5,-5.8) node {$\gamma$};
\draw (5.8,-2.5) node {$\beta$};

\end{tikzpicture}}
\end{center}
\begin{center}
\caption{Jacobi-finiteness of the QP}\label{Figure:Jacobi finite}
\end{center}
\end{figure}
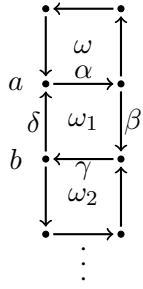
\end{proof}

\begin{remark}\label{rem:nonJacobi-finite}
Unlike the case for the QP $(Q_{\textrm{ini}},W_{\textrm{ini}})$, the IQP $(\overline{Q}_{\textrm{ini}},F,\overline{W}_{\textrm{ini}})$ is not Jacobi-finite. In particular, any power of a fundamental cycle of $\overline{Q}_{ini}$ is non-zero in the Jacobian algebra $\mathcal{P}(\overline{Q}_{\textrm{ini}},F,\overline{W}_{\textrm{ini}})$
\end{remark}




\subsection{The uniqueness}We study in this subsection the uniqueness of the QPs of a Grassmannian cluster algebra. This is based on a general result of Gei\ss-Labardini-Schr\"oer \cite{GLS16}. They give a criterion which guarantees the uniqueness of a non-degenerate QP.

We first recall some definitions in \cite{GLS16}. If $W$ is a {\em finite potential}, i.e. the potential with finite many items in the its expansion, then we denote by $long(W)$ the length of the longest cycle appearing in $W$. For a non-zero element $u\in \C\langle Q\rangle$, denote by $short(u)$ the unique integer such that $u\in \mathfrak{m}^{short(u)}$ but $u\notin \mathfrak{m}^{short(u)+1}$. We also set $short(0)=+\infty$. (see \cite[Section 2.5]{GLS16}). The following two propositions are important for our main result.

\begin{proposition}\cite[Proposition 2.4]{GLS16}\label{Proposition:uniqueness one}
Let $(Q,W)$ be a QP over a quiver $Q$, and let $I$ be a subset of $Q_0$ such that the following hold:
\begin{itemize}
\item[(1)] The full subquiver $Q|_I$
 of $Q$ with vertex set $I$ contains exactly $m$ arrows $\alpha_1,\cdots ,\alpha_m$;
\item[(2)] $l:=\alpha_1\cdots \alpha_m$ is a cycle in $Q$;
\item[(3)] The vertices $s(\alpha_1),\cdots,s(\alpha_m)$ are pairwise different;
\item[(4)] $W$ is non-degenerate.
\end{itemize}
Then the cycle $l$ appears in $W$.
\end{proposition}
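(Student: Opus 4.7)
The plan is to argue by contradiction via induction on the cycle length $m$. Conditions (1)--(3) jointly imply $|I| = m$, that $Q|_I$ is exactly the cyclic arrangement of the $m$ arrows $\alpha_1,\dots,\alpha_m$, and that $t(\alpha_i) = s(\alpha_{i-1})$ with indices read cyclically (so in particular $t(\alpha_1) = s(\alpha_m)$).

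Assume for contradiction that $l$ is not cyclically equivalent to any term of $W$. I would mutate at $v := s(\alpha_m) = t(\alpha_1)$. Since $\alpha_1$ and $\alpha_m$ are the only $Q|_I$-arrows incident to $v$, the pre-mutation creates exactly one new arrow relevant to $I$, namely $\gamma := [\alpha_m\alpha_1]$ from $s(\alpha_1)$ to $s(\alpha_{m-1})$. Together with the untouched $\alpha_2,\dots,\alpha_{m-1}$, this yields a candidate cycle $l' := \gamma\alpha_2\cdots\alpha_{m-1}$ of length $m-1$ on $I' := I\setminus\{v\}$, whose sources are still pairwise distinct.

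The base case $m = 3$ gives the contradiction immediately: $\gamma:s(\alpha_1)\to s(\alpha_2)$ and $\alpha_2:s(\alpha_2)\to s(\alpha_1)$ form a $2$-cycle in $\tilde\mu_v(Q)$. For this $2$-cycle to disappear in the reduced $\mu_v(Q,W)$, the Splitting Theorem requires a cyclically equivalent monomial $\gamma\alpha_2$ to appear in $\tilde W$. In $\tilde W_2 = \gamma\alpha_1^*\alpha_m^* + \cdots$ it cannot, for degree reasons; in $\tilde W_1$ it would arise only from a factor $\alpha_m\alpha_1\alpha_2 = l$ in $W$, contradicting our assumption. Hence $\mu_v(Q)$ retains a $2$-cycle, contradicting non-degeneracy.

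For the inductive step $m \ge 4$, the arrows $\gamma,\alpha_2,\dots,\alpha_{m-1}$ do not form any $2$-cycle among themselves inside $I'$, so they all survive reduction and $l'$ is a genuine cycle of length $m-1$ in $\mu_v(Q)$ satisfying conditions (1)--(3) for the pair $(\mu_v(Q,W), I')$. One must verify that $l'$ is still absent from $\mu_v(W)$: this follows because $l \notin W$ forces $l' \notin \tilde W_1\cup\tilde W_2$, and the Splitting Theorem's right-equivalence affects only arrows involved in $2$-cycle terms of $\tilde W$, so the coefficient of $l'$ remains zero after passing to the reduced potential. Non-degeneracy is preserved by QP-mutations, so the induction hypothesis applies to $(\mu_v(Q,W), I')$ and produces the desired contradiction. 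The principal technical obstacle is this last point --- making the Splitting Theorem calculation explicit enough to confirm that cycles of length $>2$ which are disjoint from the trivial $2$-cycle data are not perturbed by the right-equivalence implementing the reduction.
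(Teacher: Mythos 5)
The paper gives no proof of this statement---it is quoted directly from \cite[Proposition 2.4]{GLS16}---but your argument is essentially the proof given there: induct on $m$ by mutating at a vertex of the cycle, so that the chordless $m$-cycle becomes a chordless $(m-1)$-cycle absent from the mutated potential, until one reaches a $2$-cycle whose degree-two term is missing from $\tilde W$ and which therefore survives reduction, contradicting non-degeneracy. The technical point you flag is genuine but closes exactly as you suggest: the reducing right-equivalence fixes every arrow not lying on a trivial $2$-cycle, and since the full subquiver of $\tilde\mu_v(Q)$ on $I'$ consists precisely of $\gamma,\alpha_2,\dots,\alpha_{m-1}$ and contains no $2$-cycle, any term of the substituted potential supported on these arrows would have to come from a term of $\tilde W$ whose remaining arrows also join vertices of $I'$ and hence, by fullness, already lie in that cycle---so no term cyclically equivalent to $l'$ can be created and the induction goes through.
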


\begin{proposition}\cite[Theorem 8.20]{GLS16}\label{Proposition:uniqueness two}
Suppose $(Q,W)$ is a QP over a quiver $Q$ that satisfies the following three properties
\begin{itemize}
\item[(1)] $W$ is a finite potential;
\item[(2)] Every cycle $l$ in $Q$ of length greater then $long(W)$ is cyclically equivalent to an element of the form $\sum_{\alpha\in Q_1}\eta_\alpha\partial_\alpha W$ with $short(\eta_\alpha)+short(\partial_\alpha W)\geqslant length(l)$ for all $\alpha \in Q_1$;
\item[(3)] Every non-degenerate potential on $Q$ is right-equivalent to $W+W'$ for some potential $W'$ with $short(W')> long(W)$.
\end{itemize}
Then $W$ is non-degenerate and every non-degenerate QP on $Q$ is right-equivalent to $W$.
\end{proposition}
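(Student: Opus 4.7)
The plan is to reduce both claims (non-degeneracy of $W$ and uniqueness) to a single \emph{absorption lemma}: if $W'\in Pot(Q)$ satisfies $short(W')>long(W)$, then $W+W'$ is right-equivalent to $W$. Granting this, invoke the general existence of some non-degenerate potential $W_0$ on $Q$ (from \cite{DWZ08}); condition~(3) produces $W_0\sim W+W'_0$ with $short(W'_0)>long(W)$, and the absorption lemma yields $W_0\sim W$. In particular $W$ is itself non-degenerate. For any other non-degenerate $W_1$, applying condition~(3) followed by the absorption lemma gives $W_1\sim W+W'_1\sim W$, which is uniqueness.

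To prove the absorption lemma, I would construct the desired right-equivalence $\varphi$ as an $\mathfrak{m}$-adic limit of unitriangular automorphisms of $\C\langle\langle Q\rangle\rangle$. Set $N_1:=short(W')$ and decompose $W'=W'_{(1)}+(\text{tail of }short>N_1)$, where $W'_{(1)}$ is the sum of the length-$N_1$ cycles of $W'$. Each cycle $l$ in $W'_{(1)}$ has $length(l)=N_1>long(W)$, so condition~(2) expresses it cyclically as $\sum_{\alpha}\eta^{(l)}_{\alpha}\,\partial_{\alpha}W$ with $short(\eta^{(l)}_{\alpha})+short(\partial_{\alpha}W)\ge N_1$. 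Setting $\eta_\alpha:=\sum_l \eta^{(l)}_\alpha$ and defining the unitriangular automorphism $\varphi_1$ on arrows by $\varphi_1(\alpha)=\alpha-\eta_{\alpha}$, the first-order term of $\varphi_1(W)$ equals $-\sum_{\alpha}\eta_{\alpha}\,\partial_{\alpha}W$, which cancels $W'_{(1)}$ modulo the cyclic equivalence closure $C$. One thus obtains
\[
\varphi_1(W+W')\;\equiv\;W+W^{(2)}\pmod{C}\quad\text{with}\quad short(W^{(2)})>N_1.
\]
Iterating produces $\varphi_2,\varphi_3,\ldots$ with tails of strictly increasing order, and the infinite composition $\varphi=\cdots\varphi_2\varphi_1$ converges $\mathfrak{m}$-adically to a right-equivalence sending $W+W'$ to a potential cyclically equivalent to $W$.

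The principal obstacle is the quantitative length bookkeeping that makes the induction strict, namely verifying that the multi-linear remainder $R_1$ arising from substituting two or more $\eta_{\alpha}$'s simultaneously into cycles of $W$ sits strictly above level $N_1$. This is exactly what the sharp inequality $short(\eta_{\alpha})+short(\partial_{\alpha}W)\ge length(l)$ in condition~(2) delivers: for a cycle of $W$ of length $m\le long(W)$, a double substitution contributes at least $2\,short(\eta_{\alpha})+(m-2)\ge 2(N_1-(m-1))+(m-2)=2N_1-m$, which exceeds $N_1$ because $m\le long(W)<N_1$. A secondary delicate point is that all intermediate identities hold only modulo $C$; however, since cyclic differentiation and unitriangular automorphisms are both continuous in the $\mathfrak{m}$-adic topology, the cyclic ambiguity at each finite stage passes safely to the $\mathfrak{m}$-adic limit.
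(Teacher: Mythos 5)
The paper does not prove this proposition; it is quoted verbatim from \cite[Theorem 8.20]{GLS16}, so there is no in-paper proof to compare against. Your sketch is correct in outline and is essentially the argument of Gei\ss--Labardini--Schr\"oer themselves: reduce everything to the absorption lemma (a potential $W'$ with $short(W')>long(W)$ can be removed from $W+W'$ by a right-equivalence), prove that lemma as an $\mathfrak{m}$-adic limit of unitriangular substitutions $\alpha\mapsto\alpha-\eta_\alpha$ whose first-order effect is $-\sum_\alpha\eta_\alpha\partial_\alpha W$ modulo $C$, and use the quantitative inequality in condition (2) to see that both the higher-order remainder and the terms $\eta_\alpha\partial_\alpha W'$ lie strictly above level $N_1$, so the induction is strict and the composition converges. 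The only external input you need -- and correctly invoke -- is the existence of some non-degenerate potential on $Q$ (from \cite{DWZ08}, valid over the uncountable field $\C$), without which condition (3) would be vacuous and the non-degeneracy of $W$ could not be concluded.
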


\begin{Theorem}\label{thm:uniqueness}
Let $Q$ be a quiver of a Grassmannian cluster algebra, then the QP $(Q,{W})$ is the unique non-degenerate QP on $Q$ up to right-equivalence, and thus the unique rigid QP on $Q$ up to right-equivalence.
\end{Theorem}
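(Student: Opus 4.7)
The plan is to verify the three hypotheses of Proposition \ref{Proposition:uniqueness two} for the QP $(Q^{pr}, W^{pr})$, which will yield uniqueness among non-degenerate QPs up to right-equivalence. Uniqueness among rigid QPs then follows, since rigidity implies non-degeneracy and we have already established (in Theorem \ref{Theorem��rigid}, via the principal-part version discussed in Remark \ref{rem:nonrigidity}) that $(Q^{pr}, W^{pr})$ is itself rigid.

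For hypothesis (1), note that every fundamental cycle of $Q^{pr}(D)$ is a triangle or a quadrilateral bounding an internal oriented region of $D$. Since $D$ has only finitely many regions, $W^{pr}$ is a finite potential with $long(W^{pr}) \leq 4$. For hypothesis (2), I would reuse the argument of Theorem \ref{Theorem��rigid}, being careful to track lengths. The key point is that each relation $\partial_\alpha W^{pr}$ lies in $\mathfrak{m}^{2}$ or $\mathfrak{m}^3$ (since the relations are differences of paths of length $2$ or $3$ obtained by cyclically differentiating a triangle or a quadrilateral). The reduction procedure used in Lemma \ref{Lemma:first key lemma}, together with Lemma \ref{Lemma:second key lemma}, expresses any cycle $l$ as a sum $\sum_\alpha \eta_\alpha \partial_\alpha W^{pr}$, and at each step one substitutes a path of length $r$ by a path of length $r$ or $r\pm 1$ coming from some $\partial_\alpha W^{pr}$. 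Consequently the coefficients $\eta_\alpha$ may be chosen so that $short(\eta_\alpha) + short(\partial_\alpha W^{pr}) \geq length(l)$, as required by (2).

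For hypothesis (3), the idea is to apply Proposition \ref{Proposition:uniqueness one} to each fundamental cycle $\omega$ of $W^{pr}$. Let $I$ be the vertex set of $\omega$; it consists of three or four pairwise distinct exchangeable vertices (the three or four corners of an internal oriented region). Because the principal part $Q^{pr}$ is $2$-acyclic (the mutation equivalence class of $Q^{pr}(D)$ consists of $2$-acyclic quivers by Scott's theorem), between any two vertices of $I$ there is at most one arrow. The hard part will be verifying that no ``diagonal'' arrow exists inside $I$; this uses the fact that an oriented region is bounded in $D$ by pairwise distinct strand segments, so that the only arrows of $Q^{pr}$ with both endpoints in $I$ are precisely the edges of $\omega$. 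Thus $Q^{pr}|_I$ contains exactly the arrows of $\omega$, conditions (1)--(3) of Proposition \ref{Proposition:uniqueness one} hold for $I$, and by (4) every non-degenerate potential on $Q^{pr}$ contains $\omega$ as a summand. Running over all fundamental cycles, every non-degenerate potential $V$ has the form $V = W^{pr}_\varepsilon + W'$ for some sign function $\varepsilon$ and some $W'$ with $short(W') > 4 = long(W^{pr})$; by Lemma \ref{Lemma:right-equivalence} this is right-equivalent to $W^{pr} + W'$, confirming hypothesis (3).

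Having verified (1)--(3), Proposition \ref{Proposition:uniqueness two} yields that $W^{pr}$ is non-degenerate and that every non-degenerate QP on $Q^{pr}$ is right-equivalent to $W^{pr}$. In particular every rigid QP on $Q^{pr}$ (being automatically non-degenerate) is right-equivalent to $W^{pr}$, completing the proof. The principal technical obstacle I anticipate is the ``no diagonal arrow'' verification in the application of Proposition \ref{Proposition:uniqueness one}; this may need to be checked separately for the two types of fundamental cycles (triangles and quadrilaterals) and possibly by using the compatibility of the picture under mutation established in Theorem \ref{compatible}, to reduce to the initial Postnikov diagrams of Figure \ref{Figure:initial Postnikov diagram} where the situation is explicit.
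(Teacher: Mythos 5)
Your overall strategy is the same as the paper's: reduce to the initial QP, then verify hypotheses (1)--(3) of Proposition \ref{Proposition:uniqueness two}, using Proposition \ref{Proposition:uniqueness one} for (3) and the machinery behind the rigidity argument for (2). However, your verification of hypothesis (2) has a genuine gap. You assert that each reduction step replaces a path of length $r$ by one of length $r$ or $r\pm 1$, and that ``consequently'' the coefficients can be chosen with $short(\eta_\alpha)+short(\partial_\alpha W^{pr})\geqslant length(l)$. That inference is not valid. If a relation $\partial_\alpha W^{pr}=p_1-p_2$ with $length(p_1)=3$ and $length(p_2)=2$ (a quadrilateral and a triangle sharing the arrow $\alpha$) were used to replace $p_1$ by $p_2$ inside $l=q p_1 p$, the resulting term $\pm pq\,\partial_\alpha W^{pr}$ would satisfy $short(pq)+short(\partial_\alpha W^{pr})=(length(l)-3)+2=length(l)-1$, violating the required bound; replacing $p_2$ by $p_1$ instead lengthens the cycle and the recursion no longer obviously terminates. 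What saves the argument --- and what the paper's proof isolates --- is that the principal part $Q^{pr}_{ini}$ contains no triangular fundamental cycles at all (every triangle of the initial quiver touches a frozen vertex), so each $\partial_\alpha W^{pr}_{ini}$ is homogeneous of degree $3$, only the configurations (6) and (7) of Figure \ref{Figure:first key lemma} occur, and every substitution preserves length exactly. Your phrase ``differences of paths of length $2$ or $3$ obtained by differentiating a triangle or a quadrilateral'' shows you have not excluded precisely the configuration that would break the estimate. The paper's Step I (induction on the level, producing $\eta_\alpha$ supported in degree exactly $4m-3$ for $\omega^m$, so that $short(\eta_\alpha)+3=4m=length(\omega^m)$) and Step II (propagating the bound along the length-preserving reductions) supply the needed bookkeeping.

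Two smaller points. First, the ``no diagonal arrow'' issue you flag for Proposition \ref{Proposition:uniqueness one}, which you leave as an anticipated obstacle, disappears once the reduction to the initial QP is made the very first step (right-equivalence classes are preserved under mutation, as the paper notes): in the explicit grid $Q^{pr}_{ini}$ the full subquiver on the four vertices of a fundamental square visibly contains only its four edges. Second, Proposition \ref{Proposition:uniqueness one} only guarantees that each fundamental cycle appears with a \emph{non-zero} coefficient, not with coefficient $\pm 1$, so the normalization needed for hypothesis (3) requires rescaling arrows by arbitrary elements of $\C^{*}$; the planarity argument of Lemma \ref{Lemma:right-equivalence} does extend to this, but as written your appeal to a ``sign function $\varepsilon$'' does not cover it. The acyclic case $Gr(2,n)$ should also be split off separately, as in the paper.
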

\begin{proof}
By Remark \ref{rem:nonrigidity}, $(Q,{W})$ is rigid, so if it is unique as a non-degenerate QP then it must be unique as a rigid QP. Since the mutations of two right-equivalent QPs are still right-equivalent, we only need to prove the theorem for the initial QP.

To do this, we check that the conditions $(1)$-$(3)$ in Proposition \ref{Proposition:uniqueness two} hold for $(Q_{\textrm{ini}},{W}_{\textrm{ini}})$. The condition $(1)$ is clear. Since the cluster algebra of $Gr(2,n)$ is of acyclic type, so there is a unique rigid QP. Otherwise, there exists at least one internal fundamental cycle on $Q_{\textrm{ini}}$, and $long({W}_{\textrm{ini}})=4$. We prove the condition $(2)$ in two steps, see the Figure \ref{Figure:initial quiver1}.

\medskip

{\bf{Step I:}} Let $\omega$ be a fundamental cycle of $Q_{\textrm{ini}}$ and $m$ be a positive integer number. We claim that $\omega^m$ is cyclically equivalent to $\sum_{\alpha\in Q_1} \eta_\alpha\partial_\alpha{W}_{\textrm{ini}}$, where the length of a path appearing in non-zero $\eta_\alpha$ is $4m-3$, and the length of all paths appearing in $\partial_\alpha {W}_{\textrm{ini}}$ is $3$.

We prove this by induction on the level of $\omega$. Assume the level of $\omega$ is $2$ and $\alpha$ be the bottom arrow of $\omega$, then it is cyclically equivalent to $\alpha\partial_\alpha{W}_{\textrm{ini}}$. Moreover,
$$\omega^m \mbox{~ is~ cyclically~ equivalent~ to ~}((\alpha\partial_\alpha {W}_{\textrm{ini}})^{m-1}\alpha)\partial_\alpha {W}_{\textrm{ini}},$$
where $\eta_\alpha=(\alpha\partial_\alpha {W}_{\textrm{ini}})^{m-1}\alpha$ and $\partial_\alpha {W}_{\textrm{ini}}$ satisfy the conditions in the claim.

Now let $\omega$ be a fundamental cycle located at level $t$. Assume that the claim holds for the fundamental cycle $\alpha\rho\nu\mu$, which is located at level $t-1$, see the Figure \ref{Figure:uniqueness one}. Here we only consider the clockwise cycle $\alpha\rho\nu\mu$, another case is similar. So we may assume that $(\alpha\rho\nu\mu)^m$ is cyclically equivalent to a potential $\sum_{\alpha'\in Q_1} \eta_{\alpha'}\partial_{\alpha'}{W}_{\textrm{ini}}$ satisfying the claim.
Then $\omega^m$ is cyclically equivalent to $(\alpha\delta\gamma\beta)^m$, which equals to $(\alpha\rho\nu\mu-\alpha\partial_\alpha {W}_{\textrm{ini}})^m$.

Note that we may write the expansion of
$(\alpha\rho\nu\mu-\alpha\partial_\alpha {W}_{\textrm{ini}})^m$ as the form of
$(\alpha\rho\nu\mu)^m+\sum_k S_k$,
where $S_k$ is a multiplication of $\alpha\rho\nu\mu$ and $-\alpha\partial_\alpha {W}_{\textrm{ini}}$ with the term $-\alpha\partial_\alpha {W}_{\textrm{ini}}$ appearing in it at least once. We write $S_k=S'_k\alpha\partial_\alpha {W}_{\textrm{ini}}S''_k$, where $S'_k$ and $S''_k$ are multiplications (maybe empty) of $\alpha\rho\nu\mu$ and $-\alpha\partial_\alpha {W}_{\textrm{ini}}$. Then $S_k-S''_kS'_k\alpha\partial_\alpha {W}_{\textrm{ini}} \in C$.
Thus
\begin{equation*}
\begin{aligned}
 & (\alpha\delta\gamma\beta)^m-[\sum_{\alpha'\in Q_1} \eta_{\alpha'}\partial_{\alpha'}{W}_{\textrm{ini}}+({\textstyle\sum}_k S''_kS'_k\alpha)\partial_\alpha {W}_{\textrm{ini}}]\\
=&[(\alpha\rho\nu\mu)^m+{\textstyle\sum}_k S_k]-[\sum_{\alpha'\in Q_1} \eta_{\alpha'}\partial_{\alpha'}{W}_{\textrm{ini}}+({\textstyle\sum}_k S''_kS'_k\alpha)\partial_\alpha {W}_{\textrm{ini}}] \\
=& [(\alpha\rho\nu\mu)^m-\sum_{\alpha'\in Q_1} \eta_{\alpha'}\partial_{\alpha'}{W}_{\textrm{ini}}]+{\textstyle\sum}_k (S_k-S''_kS'_k\alpha\partial_\alpha {W}_{\textrm{ini}})
\in C.\\
\end{aligned}
\end{equation*}
So $(\alpha\delta\gamma\beta)^m$, and therefore $\omega^m$, is cyclically equivalent to
$$\sum_{\alpha'\in Q_1} \eta_{\alpha'}\partial_{\alpha'}{W}_{\textrm{ini}}+({\textstyle\sum}_k S''_kS'_k\beta)\partial_\beta {W}_{\textrm{ini}},$$
which satisfies the conditions in the claim.

To sum up, for any fundamental cycle $\omega$ and any positive integer number $m$, $\omega^m$ is cyclically equivalent to $\sum \eta_\alpha\partial_\alpha{W}_{\textrm{ini}}$, where $\eta_\alpha=0$ or each path in $\eta_\alpha$ has length $4m-3$, and each path in $\partial_\alpha {W}_{\textrm{ini}}$ has length $3$. So $short(\eta_\alpha)=+\infty$ or $4m-3$, and $short(\partial_\alpha {W}_{\textrm{ini}})=3$.
Therefore
$$short(\eta_\alpha)+short(\partial_\alpha {W}_{\textrm{ini}})\geqslant 4m = length(\omega^m),$$
and the condition $(2)$ holds for $\omega^m$.

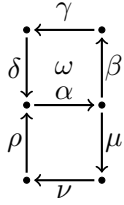
\begin{figure}
\begin{center}
{\begin{tikzpicture}[scale=0.2]
\draw[fill] (0,0) circle [radius=0.2];
\draw[fill] (0,5) circle [radius=0.2];
\draw[fill] (5,5) circle [radius=0.2];
\draw[fill] (5,0) circle [radius=0.2];

\draw[fill] (0,-5) circle [radius=0.2];
\draw[fill] (5,-5) circle [radius=0.2];

\draw[->,thick] (.5,0) -- (4.5,0);
\draw[->,thick] (5,0.5) -- (5,4.5);
\draw[->,thick] (4.5,5) -- (0.5,5);
\draw[->,thick] (0,4.5) -- (0,0.5);

\draw[->,thick] (4.5,-5) -- (0.5,-5);
\draw[->,thick] (0,-4.5) -- (0,-0.5);
\draw[<-,thick] (5,-4.5) -- (5,-0.5);

\draw (2.5,2.5) node {$\omega$};

\draw (2.5,0.8) node {$\alpha$};
\draw (-0.8,2.5) node {$\delta$};
\draw (2.5,6.2) node {$\gamma$};
\draw (5.8,2.5) node {$\beta$};

\draw (2.5,-5.8) node {$\nu$};
\draw (5.8,-2.5) node {$\mu$};
\draw (-0.8,-2.5) node {$\rho$};
\end{tikzpicture}}
\end{center}
\begin{center}
\caption{Uniqueness of the QP}\label{Figure:uniqueness one}
\end{center}
\end{figure}

\medskip

{\bf{Step II:}} Let $l$ be a cycle of $Q$. We use the notations appearing in Lemma \ref{Lemma:first key lemma}. In particular, $l'$ is the new cycle which shares an arrow $\alpha$ with $l$, $p$ and $q$ are two subpaths of $l$ such that $l=l'\pm q \partial_\alpha {W}_{\textrm{ini}}p$. At last we find a fundamental cycle $\omega$ with
$$l-\omega^m \in J(Q,{W}_{\textrm{ini}}).$$
 Assume that $l'$ is cyclically equivalent to $\sum \eta_{\alpha'}\partial_{\alpha'}{W}_{\textrm{ini}}$ and condition $(2)$ holds for $l'$, that is,
$$short(\eta_{\alpha'})+short(\partial_{\alpha'} W)\geqslant length(l').$$
Note that the quiver we consider is the principal part $Q$, so (6) and (7) in Figure \ref{Figure:first key lemma} are the only cases we should deal with. Thus we have
$$length(l)=length(l')\mbox{~and~}length(l)=length(pq)+short(\partial_\alpha {W}_{\textrm{ini}}).$$
Therefore $l$ is cyclically equivalent to $\sum \eta_{\alpha'}\partial_{\alpha'}{W}_{\textrm{ini}}\pm pq \partial_\alpha {W}_{\textrm{ini}}$, which satisfies the condition $(2)$. This proves the condition $(2)$ for all cycles over $Q$.

Finally, the condition $(3)$ follows immediately from the following two observations. By the Proposition \ref{Proposition:uniqueness one}, all of the fundamental cycles appear in ${W}_{\textrm{ini}}$. For any cycle $l$, excepting the fundamental cycles, $length(l)> 4=long({W}_{\textrm{ini}})$.
\end{proof}



\section{Applications}\label{sec:application}

\subsection{Categorification}
An ``additive categorification" of a cluster algebra has been well studied in recent years. Roughly speaking, it lifts a cluster algebra structure on a category level, that is, one may find a {\it cluster structure} (see \cite{BIRS09} for precise definition) on the category. Such category always has a duality property called $2$-Calabi-Yau property. In particular, the cluster category is an important example of $2$-Calabi-Yau triangulated category with cluster structure, which gives a categorification for the cluster algebra of acyclic type with trivial coefficients. In \cite{A09}, for a quiver with potential $(Q,W)$, Amiot constructed a generalized cluster category $\mathcal{C}_{(Q,W)}$.

Some stably $2$-Calabi-Yau Frobenius category also has cluster structure (see \cite{BIRS09,FK09}), which gives categorification of a cluster algebra with non-trivial coefficients.
In our context, such Frobenius category is always a kind of subcategory of module categories. For the cluster algebra structure on the coordinate ring
\begin{equation}\label{eq:CN}
\C[\Gr(k,n)] / (\phi_{\{1,2,\ldots ,k\}} - 1)
\end{equation}
of the affine open cell in the Grassmannian, where $\phi_{\{1,2,\ldots ,k\}}$ is the consecutive Pl\"ucker coordinate indexed by $k$-subset $\{1,2,\ldots ,k\}$, Geiss-Leclerc-Schr\"oer have given in \cite{GLS08} a categorification in terms of a subcategory
$\Sub Q_k$ of the category of finite dimensional modules over the preprojective algebra of type $A_{n-1}$.
Note that the cluster coefficient $\phi_{\{1,2,\ldots ,k\}}$ in $\C[\Gr(k,n)]$ is not realised in the category.
More recently, Jensen-King-Su~\cite{JKS16} have given a full and direct categorification of the cluster structure on $\C[\Gr(k,n)]$,
using the category $\CM(B)$ of (maximal) Cohen-Macaulay modules over the completion of an algebra $B$,
which is a quotient of the preprojective algebra of type $\tilde{A}_{n-1}$.

\begin{remark}
It has been proved in \cite{BKM16} that for a cluster tilting object $T$ in $CM(B)$ corresponding to a Postnikov diagram, the cluster-tilted algebra $End(T)$ is isomorphic to the Jacobian algebra $\mathcal{P}(\overline{Q},F,\overline{W})$. Note that $CM(B)$ is Hom-infinite and $End(T)$ is of infinite dimension, this is compatible with the Hom-infiniteness of $\mathcal{P}(\overline{Q},F,\overline{W})$, see Remark \ref{rem:nonJacobi-finite}.

On the other hand, Amiot, Reiten, and Todorov showed in \cite{ART11} that the generalized cluster category has some ``ubiquity" (see also in \cite{A11,AIR15,Y18}). In our situation, this means the stable categories of $Sub Q_k$ and $\CM(B)$ are both equivalent to a generalized cluster category, which is exactly the generalized cluster category defined by the QP $(Q,W)$.

\end{remark}

\subsection{Auto-equivalence groups and cluster automorphism groups}\label{Section:auto-equivalence groups and cluster automorphism groups}

Recall that for a cluster algebra $\A$, we call an algebra automorphism $f$ a \emph{cluster automorphism}, if it maps a cluster $\x$ to a cluster $\x'$, and is compatible with the mutations of the clusters. Equivalently, an algebra automorphism $f$ is a cluster automorphism if and only if $Q'\cong Q$ or $Q'\cong Q^{op}$, where $Q'$ and $Q$ are the associated quivers of $\x'$ and $\x$ respectively. We refer to \cite{ASS12,CZ16,CZ16b} for the details of cluster automorphisms.

Let $\mathcal{C}$ be a $2$-Calabi-Yau triangulated category with cluster structure. In particular, there is a cluster tilting object $T$ and a \emph{cluster map} $\phi$ which sends cluster tilting objects, which are reachable by iterated mutations from $T$ in category $\mathcal{C},$ to clusters in algebra $\A_{\phi(T)}$, where $\A_{\phi(T)}$ is the cluster algebra with initial cluster $\phi(T)$. In fact $\A_{\phi(T)}$ is the cluster algebra defined by the Gabriel quiver of $End_{\mathcal{C}}(T)$.

Denote by $Aut_T(\mathcal{C})$ a quotient group consisting of the (covariant and contravariant) triangulated auto-equivalence on $\mathcal{C}$ that maps $T$ to a cluster tilting object which is reachable from $T$ itself, where we view two equivalences $F$ and $F'$ the same if $F(T)\cong F(T')$.

Let $F$ be an auto-equivalence in $Aut_T(\mathcal{C})$. Denote by $Q$ and $Q'$ the Gabriel quivers of $End_{\mathcal{C}}(T)$ and $End_{\mathcal{C}}(F(T))$ respectively. Then $Q$ is naturally isomorphic to $Q'$ since $F$ is a triangulated equivalence. Moreover, since $F(T)$ is reachable from $T$, $\phi(F(T))$ is a cluster in $\A_{\phi(T)}$, so there is a cluster automorphism $f$ in $Aut(\A_{\phi(T)})$ which maps $\phi(T)$ to $\phi(F(T))$. Thus $Aut_T(\mathcal{C})$ can be viewed as a subgroup of $Aut(\A_{\phi(T)})$. Conversely, we have the following
\begin{conjecture}\label{conj:auto-equi-gp=clu-auto-gp}
There is an natural isomorphism $Aut_T(\mathcal{C})\cong Aut(\A_{\phi(T)})$.
\end{conjecture}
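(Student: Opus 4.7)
The natural map $\Psi\colon Aut_T(\mathcal{C})\to Aut(\mathcal{A}_{\phi(T)})$ is already described in the paragraph preceding the conjecture, so the task reduces to constructing an inverse. Given $f\in Aut(\mathcal{A}_{\phi(T)})$, the image cluster $f(\phi(T))$ has Gabriel quiver $Q'$ isomorphic to $Q$ or to $Q^{op}$; my plan is to promote that quiver isomorphism to a right-equivalence of QPs, then lift this algebraic datum to a triangulated auto-equivalence of $\mathcal{C}$ that takes $T$ to the cluster-tilting object attached to $f(\phi(T))$.

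For the lift from quivers to QPs I would invoke Theorem~\ref{compatible} together with Remark~\ref{rem:geo-ex-and-mut}, which guarantee that the QP carried by the cluster $f(\phi(T))$ is mutation equivalent to $(Q^{pr},W^{pr})$. The uniqueness part of Theorem~\ref{thm:uniqueness} then forces the underlying quiver alone to determine the potential, yielding a right-equivalence $(Q^{pr},W^{pr})\sim(Q',W')$ in the covariant case and $(Q^{pr,op},W^{pr,op})\sim(Q',W')$ in the contravariant case (the opposite QP is again the unique rigid/non-degenerate one on $Q^{pr,op}$ by the same theorem applied formally).

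The categorical lift is then provided by the Keller--Yang construction of the generalized cluster category: a right-equivalence of QPs induces a quasi-isomorphism of the associated Ginzburg dg algebras and hence a triangulated equivalence of the resulting generalized cluster categories. Composing with the canonical identification of $\mathcal{C}$ with $\mathcal{C}_{(Q^{pr},W^{pr})}$ on either side produces an auto-equivalence $F_f\in Aut_T(\mathcal{C})$ whose image under $\Psi$ is $f$ by construction, and one checks that $F_{\Psi(F)}=F$ modulo the equivalence relation used to define $Aut_T(\mathcal{C})$.

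The hard step will be ensuring well-definedness and functoriality of the assignment $f\mapsto F_f$: a right-equivalence of QPs is determined only up to the unitriangular automorphisms that enter the reduction procedure (cf.\ the proof of Theorem~\ref{compatible}), so two different lifts must be compared inside $Aut_T(\mathcal{C})$ and shown to agree there, and similarly one must verify $F_{fg}\simeq F_f\circ F_g$. In the Grassmannian setting this rigidity is tractable because Theorem~\ref{thm:uniqueness} is strong enough to pin the potential down canonically; the genuine obstacle — and, I expect, the reason that Conjecture~\ref{conj:auto-equi-gp=clu-auto-gp} is stated only as a conjecture in general — is that removing the ``quiver determines the potential'' hypothesis would require a uniqueness theorem of this flavour beyond Grassmannians, which is not currently available.
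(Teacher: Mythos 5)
Your proposal follows essentially the same route as the paper: the inclusion $Aut_T(\mathcal{C})\subset Aut(\A_{\phi(T)})$ from the paragraph preceding the conjecture, and for the converse a lift of each cluster automorphism obtained by combining the mutation-induced Keller--Yang equivalence with the right-equivalence supplied by the uniqueness of the non-degenerate potential (Theorem \ref{thm:uniqueness}), exactly as in Proposition \ref{prop:auto-equi-gp=clu-auto-gp} and Theorem \ref{thm:auto-equi-gp=clu-auto-gp}. Your closing caveat --- that this only establishes the conjecture where the quiver determines the potential --- also matches the paper, which proves the statement only under Conjecture \ref{conj:quiver-determines-potential} and verifies that hypothesis in the Grassmannian case.
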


If $\mathcal{C}$ is algebraic and $Q$ is acyclic, then Keller-Reiten proved in \cite{KR08} that $\mathcal{C}$ is a (classical) cluster category. Then the conjecture has been verified in \cite[section 3]{ASS12} and \cite[Theorem 2.3]{BIRS11}.
For the case of generalized cluster categories, the conjecture is related to the following conjecture, which says that the quivers determine the potentials up to right equivalences.
\begin{conjecture}\label{conj:quiver-determines-potential}
Let $(Q,W)$ be a non-degenerate QP. Assume that $(Q',W')$ is a QP which is mutation equivalent to $(Q,W)$. Then

(1) $(Q',W')$ is right equivalent to $(Q,W)$ if $Q'\cong Q$;

(2) $(Q',W')$ is right equivalent to $(Q^{op},W^{op})$ if $Q'\cong Q^{op}$.
\end{conjecture}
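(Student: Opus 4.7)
The plan is to reduce Conjecture~\ref{conj:quiver-determines-potential} to a uniqueness statement for non-degenerate potentials on the underlying quiver, and then apply the Gei\ss-Labardini-Schr\"oer criterion (Proposition~\ref{Proposition:uniqueness two}). For part~(1), fix an isomorphism $\varphi\colon Q'\to Q$ and transport $W'$ to a potential $\widetilde{W}'=\varphi(W')$ on $Q$. Since mutation preserves non-degeneration, $(Q,\widetilde{W}')$ is itself a non-degenerate QP on $Q$. Hence it suffices to show that $W$ is the unique non-degenerate potential on $Q$ up to right equivalence, from which $(Q',W')\sim(Q,W)$ follows after undoing $\varphi$. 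Part~(2) is analogous: $(Q^{op},W^{op})$ is also non-degenerate, and transporting $W'$ via the anti-isomorphism $Q'\cong Q^{op}$ reduces the claim to uniqueness on $(Q^{op},W^{op})$.

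The task therefore becomes verifying the three hypotheses of Proposition~\ref{Proposition:uniqueness two} for $(Q,W)$. Condition~(1), finiteness of $W$, is an assumption one can impose (and which holds in all natural geometric examples). For condition~(3), Proposition~\ref{Proposition:uniqueness one} forces every cycle whose underlying full subquiver satisfies the pairwise-distinct-sources hypothesis to appear in every non-degenerate potential, which typically identifies the cycles supporting $W$ up to a correction of strictly higher order in $\mathfrak{m}$. The delicate step is condition~(2): every cycle $l$ with $length(l)>long(W)$ must be cyclically equivalent to $\sum_{\alpha}\eta_\alpha\partial_\alpha W$ with $short(\eta_\alpha)+short(\partial_\alpha W)\geqslant length(l)$ for each arrow $\alpha$.

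Verifying condition~(2) is the main obstacle. In the Grassmannian setting this was established in the proof of Theorem~\ref{thm:uniqueness} by exploiting the combinatorial backbone of fundamental cycles coming from a Postnikov diagram, inductively pushing a long cycle through fundamental cycles via the Jacobian relations while keeping track of lengths. For a general non-degenerate QP no such backbone is available, and it is known that there can exist non-right-equivalent non-degenerate potentials on the same quiver. Consequently Conjecture~\ref{conj:quiver-determines-potential} cannot be proved in full generality by this route without further structural input. A realistic attack would either restrict to a suitable subclass of QPs (rigid, Jacobi-finite, with $W$ supported on a finite family of short ``fundamental'' cycles satisfying a planarity-type condition) where the Grassmannian argument generalises, or else proceed by induction on the mutation distance from a base case, using the invariance of right equivalence under reduced pre-mutation to propagate the length-control condition in~(2) along a mutation sequence realising the isomorphism $Q'\cong Q$ (resp.\ $Q'\cong Q^{op}$).
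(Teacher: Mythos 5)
The statement you were asked to prove is labelled a \emph{conjecture} in the paper, and the paper does not prove it: no general argument is given, and the authors only establish it for the QPs arising from Grassmannian cluster algebras (Theorem \ref{thm:auto-equi-gp=clu-auto-gp}), where it follows from the uniqueness result of Theorem \ref{thm:uniqueness}. Your proposal is honest about this --- you explicitly conclude that the conjecture ``cannot be proved in full generality by this route without further structural input'' --- so what you have written is a strategy sketch rather than a proof, and it should be judged as such. As a sketch, it matches the paper's treatment of the provable special case exactly: mutation preserves non-degeneracy, so $(Q,\varphi(W'))$ is a non-degenerate potential on $Q$, and one then invokes uniqueness of the non-degenerate potential on $Q$ up to right-equivalence, which in the Grassmannian case is verified through conditions (1)--(3) of Proposition \ref{Proposition:uniqueness two} using the fundamental-cycle combinatorics. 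Your identification of condition (2) as the hard step is also where the real work in the paper's Theorem \ref{thm:uniqueness} lies.

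One point worth flagging: your reduction replaces the conjecture by the strictly stronger statement that $W$ is the \emph{unique} non-degenerate potential on $Q$ up to right-equivalence, with no mutation-equivalence hypothesis. The conjecture only concerns potentials in the same mutation class, so the existence of non-right-equivalent non-degenerate potentials on a single quiver (which you cite) rules out your reduction in general but does not refute the conjecture itself --- those potentials need not be mutation-equivalent. Any genuine attack on the general statement would have to exploit the mutation-equivalence hypothesis directly (e.g.\ your suggested induction along a mutation sequence), rather than passing through global uniqueness. Since neither you nor the paper closes this gap, the correct conclusion is that the general statement remains open, and your proposal should be read as a correct account of why, together with a correct reconstruction of the paper's argument in the Grassmannian case.
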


\begin{proposition}\label{prop:auto-equi-gp=clu-auto-gp}
If Conjecture \ref{conj:quiver-determines-potential} is true for a Jacobian-finite QP $(Q,W)$, then Conjecture \ref{conj:auto-equi-gp=clu-auto-gp} is true for the generalized cluster category $\mathcal{C}_{(Q,W)}$.
\end{proposition}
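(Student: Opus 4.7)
The plan is to construct, for each cluster automorphism $f\in \mathrm{Aut}(\mathcal{A}_{\phi(T)})$, a (covariant or contravariant) triangulated auto-equivalence $\Phi_f$ of $\mathcal{C}_{(Q,W)}$ with $\phi(\Phi_f(T))=f(\phi(T))$, and then to verify that $f\mapsto \Phi_f$ descends to the quotient $\mathrm{Aut}_T(\mathcal{C}_{(Q,W)})$ and is the two-sided inverse of the natural embedding $\mathrm{Aut}_T(\mathcal{C}_{(Q,W)})\hookrightarrow \mathrm{Aut}(\mathcal{A}_{\phi(T)})$ already exhibited in Section~\ref{Section:auto-equivalence groups and cluster automorphism groups}.

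First I would fix $f$ and use cluster-algebra combinatorics to reach the desired cluster: since $f(\phi(T))$ is a cluster in $\mathcal{A}_{\phi(T)}$, there is a mutation sequence $\mu=\mu_{i_k}\cdots\mu_{i_1}$ producing a cluster tilting object $T':=\mu(T)$ with $\phi(T')=f(\phi(T))$. The iterated Keller--Yang equivalences for Jacobi-finite QPs supply a triangle equivalence
\[F_{\mu}:\mathcal{C}_{(Q,W)}\xrightarrow{\ \sim\ }\mathcal{C}_{\mu(Q,W)},\qquad F_{\mu}(T)\cong T',\]
where $\mu(Q,W)=(Q',W')$ is the iterated QP-mutation and is itself Jacobi-finite by Theorem~\ref{Theorem��rigid} and the preservation of Jacobi-finiteness under mutation. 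Because $f$ is a cluster automorphism, the underlying quiver $Q'$ is isomorphic either to $Q$ (covariant case) or to $Q^{op}$ (contravariant case).

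Next I would invoke the hypothesis that Conjecture~\ref{conj:quiver-determines-potential} holds for $(Q,W)$ to upgrade this quiver isomorphism to a right-equivalence: either $(Q',W')\simeq_{re}(Q,W)$ or $(Q',W')\simeq_{re}(Q^{op},W^{op})$. A right-equivalence of Jacobi-finite QPs induces an isomorphism of the associated complete Ginzburg dg algebras and hence a triangulated equivalence of their generalized cluster categories sending canonical cluster tilting object to canonical cluster tilting object; in the covariant case this is a triangle equivalence $G:\mathcal{C}_{(Q',W')}\xrightarrow{\sim}\mathcal{C}_{(Q,W)}$ with $G(T_{\mathrm{can}})=T$, and we set $\Phi_f:=G\circ F_{\mu}$. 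In the contravariant case one combines the right-equivalence with the canonical contravariant equivalence $\mathcal{C}_{(Q^{op},W^{op})}\simeq \mathcal{C}_{(Q,W)}^{\mathrm{op}}$ coming from the opposite of the Ginzburg algebra to obtain a contravariant equivalence playing the role of $G$, and $\Phi_f$ is constructed analogously.

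Finally I would verify the two ``bookkeeping'' points. Well-definedness in $\mathrm{Aut}_T(\mathcal{C}_{(Q,W)})$ amounts to showing that a different mutation sequence $\mu'$ with $\phi(\mu'(T))=f(\phi(T))$ yields an auto-equivalence agreeing with $\Phi_f$ on $T$ up to isomorphism, which is forced because the bijection between cluster tilting objects reachable from $T$ and clusters in $\mathcal{A}_{\phi(T)}$ already selects $T'$ uniquely up to isomorphism. That $f\mapsto \Phi_f$ is inverse to the natural embedding is then immediate, since the latter is defined by $F\mapsto(\phi(T)\mapsto \phi(FT))$ and the construction of $\Phi_f$ guarantees $\phi(\Phi_f T)=f(\phi T)$. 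The main obstacle I anticipate is the contravariant case: one must take care that the right-equivalence $(Q',W')\simeq_{re}(Q^{op},W^{op})$ genuinely lifts to a $\C$-linear triangulated \emph{duality} of the relevant dg categories that respects the shift and the canonical cluster tilting object, rather than merely an isomorphism of complete algebras, and that the resulting contravariant functor indeed represents the class of $f$ in $\mathrm{Aut}_T(\mathcal{C}_{(Q,W)})$ modulo the chosen equivalence relation.
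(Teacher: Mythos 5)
Your proposal follows essentially the same route as the paper's proof: use the (already established) embedding $Aut_T(\mathcal{C}_{(Q,W)})\subset Aut(\A_{\phi(T)})$, lift a cluster automorphism $f$ by composing the iterated Keller--Yang equivalence $\mathcal{C}_{(Q,W)}\to\mathcal{C}_{\mu(Q,W)}$ with the equivalence induced by the right-equivalence $(Q',W')\simeq(Q,W)$ (or $(Q^{op},W^{op})$) supplied by Conjecture \ref{conj:quiver-determines-potential}, and check the composite sends $T$ to $\mu(T)$. The only quibble is notational (the Keller--Yang functor sends the canonical cluster-tilting object to the \emph{mutation} of the canonical one in the target category, so your ``$F_\mu(T)\cong T'$'' should be read after transporting back via $G$), but this does not affect the argument, and your extra attention to well-definedness and the contravariant case is a welcome supplement to the paper's brief treatment.
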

\begin{proof}
Since $(Q,W)$ is Jacobian-finite, recall from \cite{A09} that, there is a canonical cluster titling object $T$ in $\mathcal{C}_{(Q,W)}$ whose endomorphism algebra is isomorphic to the Jacobi-algebra $J(Q,W)$.
 Because we already have
 $$Aut_T(\mathcal{C}_{(Q,W)})\subset Aut(\A_{\phi(T)}),$$
it suffices to show that any cluster automorphism $f$ can be lifted as an auto-equivalence on $\mathcal{C}$ which maps the canonical cluster tilting object to a reachable one. Assume that $f$ maps $\phi(T)$ to a cluster $\mu(\phi(T))$ with quiver $Q'\cong Q$, where $\mu(\phi(T))$ is obtained from $\phi(T)$ by iterated mutations. Denote by $(Q',W')=\mu(Q,W)$ the QP obtained from $(Q,W)$ by the same steps of mutations.

On the one hand, by \cite[Theorem 3.2]{KY11}, there is an equivalence $\Phi$ from $\mathcal{C}_{(Q,W)}$ to $\mathcal{C}_{(Q',W')}$ which maps $T$ to $\mu(T')$, where $T'$ is the canonical cluster tilting object in $\mathcal{C}_{(Q',W')}$ whose endomorphism algebra is isomorphic to $J(Q',W')$.

On the other hand, Conjecture \ref{conj:quiver-determines-potential} ensures that there is a right equivalence between $(Q',W')$ and $(Q,W)$, and then by \cite[Lemma 2.9]{KY11}, there is a covariant equivalence $\Psi$ from $\mathcal{C}_{(Q',W')}$ to $\mathcal{C}_{(Q,W)}$. Note that $\Psi$ maps $T'$ to $T$, and thus maps $\mu(T')$ to $\mu(T)$, since the mutations are obtained by exchanged triangles (see \cite{BIRS09} for example) and the equivalence $\Psi$ is triangulated. Finally, the auto-equivalence $\Psi\Phi$ is what we wanted, which gives a lift of $f$.
We have a similar proof for the case $Q'\cong Q^{op}$. See the following diagram for the equivalences.
$$\xymatrix{\mathcal{C}_{(Q,W)}\ar@{-->}[dr]_{\Psi\Phi}\ar[rr]^\Phi&&\mathcal{C}_{(Q',W')}\ar[dl]^\Psi
\\&\mathcal{C}_{(Q,W)}&}$$
\end{proof}

\begin{Theorem}\label{thm:auto-equi-gp=clu-auto-gp}
For the non-degenerate QPs arising from the Grassmannians cluster algebra,  Conjecture \ref{conj:quiver-determines-potential} is true. So for the associated generalized cluster category $\mathcal{C}$, we have an isomorphism $Aut_T(\mathcal{C})\cong Aut(\A_{\phi(T)})$.
\end{Theorem}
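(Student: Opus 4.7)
The plan is to split the statement into the two conjectures and prove Conjecture \ref{conj:quiver-determines-potential} first, then invoke Proposition \ref{prop:auto-equi-gp=clu-auto-gp}. For Conjecture \ref{conj:quiver-determines-potential} part (1), suppose $(Q',W')$ is mutation equivalent to $(Q,W)=(Q^{pr},W^{pr})$ with $Q'\cong Q$. Since mutation preserves non-degeneration and $(Q,W)$ is non-degenerate (being rigid by our Theorem on rigidity), $(Q',W')$ is a non-degenerate QP on $Q$. Theorem \ref{thm:uniqueness} asserts that there is only one non-degenerate QP on such a Grassmannian principal quiver up to right-equivalence. Therefore $(Q',W')$ is right equivalent to $(Q,W)$, as desired.

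For part (2), suppose $Q'\cong Q^{op}$. The key observation is that $Q^{op}$ is itself the principal part of the Grassmannian quiver of $\C[\Gr(n-k,n)]$: reversing every strand of a $(k,n)$-Postnikov diagram produces an $(n-k,n)$-Postnikov diagram whose associated quiver is exactly $Q^{op}$, and under this identification the fundamental cycles of type I are sent to their opposites. Consequently $(Q^{op},W^{op})$ is, up to the sign-change right-equivalence furnished by Lemma \ref{Lemma:right-equivalence}, the canonical QP of $\C[\Gr(n-k,n)]$, and Theorem \ref{thm:uniqueness} applies to $Q^{op}$. Again $(Q',W')$ is non-degenerate on $Q^{op}$, hence right-equivalent to $(Q^{op},W^{op})$, proving (2).

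Once Conjecture \ref{conj:quiver-determines-potential} is established for $(Q^{pr},W^{pr})$, the second assertion of the theorem follows directly from Proposition \ref{prop:auto-equi-gp=clu-auto-gp}: the Jacobian-finiteness hypothesis of that proposition is the content of our Jacobi-finite theorem, and the input of Conjecture \ref{conj:quiver-determines-potential} is what we just verified. The proposition then lifts any cluster automorphism to a triangulated auto-equivalence of $\mathcal{C}_{(Q^{pr},W^{pr})}$ sending the canonical cluster tilting object $T$ to a reachable one, yielding the isomorphism $Aut_T(\mathcal{C})\cong Aut(\A_{\phi(T)})$.

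The main obstacle I expect is the verification in part (2) that $Q^{op}$ is realised as the principal quiver of a Postnikov diagram and that $W^{op}$ (up to cyclic equivalence and signs) coincides with the potential of that dual diagram. This amounts to checking that the ``reverse all strands'' operation on $(k,n)$-Postnikov diagrams swaps the roles of clockwise and anticlockwise oriented regions coherently, so that the alternating sum defining $W^{op}$ matches $\overline{W}^{pr}$ of the $(n-k,n)$-diagram after applying Lemma \ref{Lemma:right-equivalence}. The rest is a direct application of the results already established in the paper.
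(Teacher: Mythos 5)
Your proposal is correct, and part (1) together with the final appeal to Proposition \ref{prop:auto-equi-gp=clu-auto-gp} (with Jacobi-finiteness as input) matches the paper's argument exactly. Where you diverge is part (2): the paper disposes of the $Q'\cong Q^{op}$ case in one line by observing that the assignment $W\mapsto W^{op}$ is a bijection between potentials on $Q$ and on $Q^{op}$ that preserves non-degeneracy and right-equivalence, so the uniqueness statement of Theorem \ref{thm:uniqueness} for $Q$ transfers formally to $Q^{op}$; no geometric input is needed. You instead realise $Q^{op}$ concretely as the principal quiver of the $(n-k,n)$-Postnikov diagram obtained by reversing all strands, check that $W^{op}$ agrees with the potential of that diagram up to the sign-change right-equivalence of Lemma \ref{Lemma:right-equivalence}, and then apply Theorem \ref{thm:uniqueness} directly to $Q^{op}$ as a Grassmannian quiver. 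Your route is valid (strand reversal does preserve the Postnikov axioms, turns a $(k,n)$-diagram into an $(n-k,n)$-diagram, reverses every quiver arrow, and swaps clockwise and anticlockwise regions), and it has the side benefit of exhibiting the $\Gr(k,n)\leftrightarrow\Gr(n-k,n)$ duality at the level of QPs; but it carries a nontrivial verification burden --- precisely the ``main obstacle'' you flag --- that the paper's abstract transfer argument avoids entirely. If you keep your version, that geometric check should be written out rather than left as an expected obstacle; otherwise the shorter formal argument suffices.
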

\begin{proof}
Let $(Q,W)$ be a non-degenerate QPs of the Grassmannians cluster algebra, and let $(Q',W')$ be a QP which is mutation equivalent to $(Q,W)$. Then $(Q',W')$ is non-degenerate. On the other hand, by Theorem \ref{thm:uniqueness}, $(Q,W)$ is the unique non-degenerate QP on $Q$, up to right equivalence. So $(Q',W')$ is right equivalent to $(Q,W)$ if $Q\cong Q'$. Note that $(Q^{op},W^{op})$ also has the non-degenerate uniqueness property since $(Q,W)$ has. Thus similarly $(Q',W')$ is also right equivalent to $(Q^{op},W^{op})$, if $Q'\cong Q^{op}$. So the conjecture \ref{conj:quiver-determines-potential} is true, and $Aut_T(\mathcal{C})\cong Aut(\A_{\phi(T)})$.
\end{proof}

\begin{remark}
For the QPs arising from a marked Riemann surface with some "technical conditions", \cite[Theorem 1.4]{GLS16} ensures the non-degenerate uniqueness. So we have a similar isomorphism as in Theorem \ref{thm:auto-equi-gp=clu-auto-gp} for this case.
\end{remark}
\subsection*{Acknowledgments}
The authors would like to thank Dong Yang and Bin Zhu for useful discussions, Claire Amiot and Jeanne Scott for explaining their works in \cite{AIR15} and \cite{S06} respectively, Bernhard Keller and Osamu Iyama for pointing us to the references \cite{A09,AIR15}. Part of the work was done when the first author visited University of Connecticut from 2017 to 2018, he thanks the university and especially professor Ralf Schiffler for hospitality and providing him an excellent working environment.


\end{document}